\titleformat{\section}[hang]
{\upshape\bfseries}{\thesection.}{5pt}
{\large\bfseries}
\newtheorem{theorem}{Theorem}[section]
\newtheorem{lemma}[theorem]{Lemma}
\newtheorem{proposition}[theorem]{Proposition}
\newtheorem{definition}[theorem]{Definition}
\newtheorem{corollary}[theorem]{Corollary}
\newtheorem{remark}[theorem]{Remark}
\newtheorem{algorithm}{Algorithm}
\newtheorem{maintheorem}{Theorem}
\newcommand{\Aut}{\mbox{\rm Aut}}
\newcommand{\Inn}{\mbox{\rm Inn}}
\newcommand{\T}{\mbox{\rm T}}
\newcommand{\inv}[1]{[#1]}
\newcommand{\INV}{\mbox{\rm MCINV}}
\newcommand{\IN}{\mbox{\rm IN}}
\newcommand{\N}{{\mathbb N}}
\newcommand{\Z}{{\mathbb Z}}
\newcommand{\G}{\mathcal{G}}
\newcommand{\GEN}[1]{\left\langle #1 \right\rangle}
\newcommand{\U}{\mathcal{U}}
\newcommand{\Ese}[2]{\mathcal{S}\left(#1\mid #2\right)}
\newenvironment{proofof}{\par\noindent \textit{Proof of }}{\qed\par\bigskip}
\newenvironment{proofsof}{\par\noindent \textit{Proofs of }}{\qed\par\bigskip}
\DeclareMathOperator{\lcm}{lcm}
\newcommand{\qand}{\quad \text{and} \quad}
\DeclareMathOperator{\Res}{Res}
\title{A classification of metacyclic groups by group invariants}
\author{Àngel Garc\'{\i}a-Blázquez and \'{A}ngel del R\'{\i}o}
\thanks{Partially supported Grant PID2020-113206GB-I00 funded by MCIN/AEI/10.13039/501100011033.}
\address{Departamento de Matem\'{a}ticas, Universidad de Murcia, 30100, Murcia, Spain} \email{angel.garcia11@um.es, adelrio@um.es}
\keywords{Finite groups, Metacyclic groups}
\subjclass{20D10,20E99}
\begin{document}
	
	\maketitle

	{\centering Dedicated to Toma Albu and Constantin N\u{a}st\u{a}sescu in their 80th birthday\par}

	\begin{abstract}
	We obtain a new classification of the finite metacyclic group in terms of group invariants. We present an algorithm to compute these invariants, and hence to decide if two given finite metacyclic groups are isomorphic, and another algorithm which computes all the metacyclic groups of a given order. A GAP implementation of these algorithms is given.
	\end{abstract}

	\section{Introduction}
	
	Classifying groups is a fundamental problem in group theory.
	Unfortunately it is a task which seems out of reach except for restricted families of groups.
	One of the classes which have received much attention is that of finite metacyclic groups. 
	It is well known that every finite metacyclic group has a presentation of the following form 
	$$\G_{m,n,s,t}=\GEN{a,b \mid a^m=1, b^n=a^s, a^b=a^t}$$
	for natural numbers $m,n,s,t$ satisfying $s(t-1)\equiv t^n-1 \equiv 0 \mod m$.
	However, the parameters $m,n,s$ and $t$ are not invariants of the group. 
	Traditionally the authors dealing with the classification of finite metacyclic group select distinguished values of $m,n,s$ and $t$ so that each isomorphism class is described by a unique election of the parameters (see \cite{Zassenhaus1958,Hall1959,Beyl1972,King1973,Liedahl1994,Liedahl1996,NewmanXu1988,Redei1989,Lindenberg1971,Sim1994}).
	This approach was culminated by C.E. Hempel who presented a classification of all the finite metacyclic groups in \cite{Hempel2000}. However it is not clear how to use this classification to describe the distinguished parameters identifying a given metacyclic group and how those distinguished parameters are connected with group invariants. 
	
	The aim of this paper is to present an alternative classification of the finite metacyclic using a slightly different approach in terms of group invariants which allows an easy implementation.
	Namely, we associate to every finite metacyclic group $G$ a $4$-tuple
	$\INV(G)=(m_G,n_G,s_G,\Delta_G)$ where $m_G,n_G$ and $s_G$ play the role of $m,n$ and $s$ in the presentation above and $\Delta_G$ is a cyclic subgroup of units modulo a divisor of $m_G$.
	Our main result consists in proving that $\INV(G)$ is an invariant of the group $G$ which determines $G$ up to isomorphism, i.e. if $G$ and $H$ are two finite metacyclic groups then they are isomorphic if and only if $\INV(G)=\INV(H)$ (\Cref{Invariants}). 
	Moreover, we describe in \Cref{Parameters} the possible values $(m,n,s,\Delta)$ of $\INV(G)$ and for such value we show how to find an integer $t$ such that $\INV(\G_{m,n,s,t})=(m,n,s,\Delta)$ (\Cref{Construction}). 
	This allows a computer implementation of the following function: one which computes $\INV(G)$ for any given finite metacyclic group, and hence of another function which decide whether two metacyclic groups are isomorphic, and another one which computes all the metacyclic subgroups of a given order.

To define $\INV(G)$ we need to introduce some notation. 
First of all, we adopt the convention that $0$ is not a natural number, so $\N$ denotes the set of positive integers. Moreover by a prime we mean a prime in $\N$. 
If  $m \in \N$, $p$ is a prime, $\pi$ is a set of primes and $A$ a finite abelian group then we denote
	\begin{center}
	\begin{tabular}{rcl}
	$\pi(m)$ & $=$ & set of primes dividing $m$, \\
	$\U_m$ &$=$& group of units of the ring $\Z/m\Z$, \\
	$m_p$ & $=$ & maximum power of $p$ dividing $m$,\\
	$m_{\pi}$ & $=$ & $\prod_{p\in \pi} m_p$, \\
	$A_{\pi}$ &=& Hall $\pi$-subgroup of $A$, \\
	$A_{\pi'}$ &=& Hall $\pi'$-subgroup of $A$.
	\end{tabular}
	\end{center}		
If $t\in \Z$ with $\gcd(t,m)=1$ then $[t]_m$ denotes the element of $\U_m$ represented by $t$ and $\GEN{t}_m$ denotes the subgroup of $\U_m$ generated by $[t]_m$.
If $q\mid m$ then $\Res_q:\U_m\rightarrow \U_q$ denotes the natural map, i.e. $\Res_q([t]_m)=[t]_q$.

Let $T$ be a cyclic subgroup of $\U_m$.
Then we define $\inv{T}=(r,\epsilon,o)$
\begin{eqnarray*}
r&=& \text{greatest divisor of } m \text{ such that } \Res_{r_{2'}}(T)=1 \text{ and }\Res_{r_2}(T)\subseteq \GEN{-1}_{r_2}; \\
\epsilon&=&
\begin{cases}
-1, & \text{if } \Res_{r_2}(T)\ne 1; \\
1, & \text{otherwise}.
\end{cases} \\
o&=&|\Res_{m_{\nu}}(T_{\nu'})|, \text{ with } \nu = \pi(m)\setminus \pi(r).
\end{eqnarray*}
If moreover, $n,s\in \N$ then we denote
	$$[T,n,s]=m_{\nu} \prod_{p\in \pi(r)} m'_p$$
with $m'_p$ defined as follows:
\begin{equation}\label{m'}
\begin{split}
& \text{if }\epsilon^{p-1}=1 \text{ then }
m'_p = \min\left(m_p,o_pr_p,\max\left(r_p,s_p,r_p\frac{s_po_p}{n_p}\right)\right);\\
& \text{if }\epsilon=-1 \text{ then }
m'_2=\begin{cases} r_2, & \text{if either } o_2\le 2  \text{ or } m_2\le 2r_2; \\
\frac{m_2}{2}, & \text{if } 4\le o_2<n_2, 4r_2\le m,  \text{ and if } s_2\le n_2r_2 \text{ then } s_2=m_2<n_2r_2; \\
m_2, & \text{otherwise}.
\end{cases}
\end{split}
\end{equation}

Let $A$ be a cyclic group of order $m$.
Then the map $\sigma_A:\U_m \rightarrow \Aut(A)$ associating $[r]_m$ with the map $a\mapsto a^r$, is a group isomorphism.
If moreover $A$ is a normal subgroup of a group $G$ then we define
	$$T_G(A)=\sigma_A^{-1}(\Inn_G(A)),$$
where $\Inn_G(A)$ is formed by the restriction to $A$ of the inner automorphisms of $G$.
We introduce notation for the entries of $T_G(A)$ by setting
$$(r_G(A),\epsilon_G(A),o_G(A))=\inv{T_G(A)}.$$
\begin{definition}
Let $G$ be a group. A \emph{metacyclic kernel} of $G$ is a normal subgroup $A$ of $G$ such that $A$ and $G/A$ are cyclic.
A \emph{metacyclic factorization} of a group $G$ is an expression $G=AB$ where $A$ is a normal cyclic subgroup of $G$ and $B$ is a cyclic subgroup of $G$. 

A \emph{minimal kernel} of $G$ is a kernel of $G$ of minimal order.

A metacyclic factorization $G=AB$ is said to be \emph{minimal} in $G$ if $(|A|,r_G(A),[G:B])$ is minimal in the lexicographical order.
In that case we denote $m_G=|A|$, $n_G=[G:A]$, $s_G=[G:B]$ and $r_G=r_G(A)$.
\end{definition}

Clearly a group is metacyclic if and only if it has metacyclic kernel if and only if it has a metacyclic factorization. Sometimes we abbreviate metacyclic kernel of $G$ or metacyclic factorization of $G$ and we simply say kernel of $G$ or factorization of $G$.

If $G=AB$ is a metacyclic factorization of $G$ then we denote
$$\Delta(AB) = \Res_{[T,n,s]}(T),  \quad \text{ with } \quad T=T_G(A),\quad  n=[G:A] \qand s=[G:B].$$

We will prove that $\Delta(AB)$ is constant for all the minimal metacyclic factorizations (\Cref{DeltaWD}). This allows to define the desired invariant:
	$$\INV(G)=(|A|,[G:A],[G:B],\Delta(AB)), \text{ with } G=AB \text{ minimal factorization of } G.$$

Our first result states that $\INV(G)$ determines $G$ up to isomorphisms, formally:

\begin{maintheorem}\label{Invariants}
Two finite metacyclic groups $G$ and $H$ are isomorphic if and only if $\INV(G)=\INV(H)$.
\end{maintheorem}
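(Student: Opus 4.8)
The plan is to prove the two implications separately, with the forward direction being essentially formal and the converse carrying all the weight.

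For the implication $G\cong H\Rightarrow\INV(G)=\INV(H)$, I would observe that every ingredient of $\INV$ is defined purely from the abstract group structure. If $\phi\colon G\to H$ is an isomorphism and $G=AB$ is a minimal metacyclic factorization, then $H=\phi(A)\phi(B)$ is again a metacyclic factorization with $|\phi(A)|=|A|$, $[H:\phi(A)]=[G:A]$ and $[H:\phi(B)]=[G:B]$. Moreover $\phi$ carries conjugation by $g\in G$ on $A$ to conjugation by $\phi(g)$ on $\phi(A)$, so under the identification $\phi|_A\colon A\to\phi(A)$ of cyclic groups of order $m$ one has $\sigma_{\phi(A)}^{-1}(\Inn_H(\phi(A)))=\sigma_A^{-1}(\Inn_G(A))$, i.e. $T_H(\phi(A))=T_G(A)$. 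Hence $(r,\epsilon,o)$, the divisor $[T,n,s]$, and the restriction $\Delta$ are all preserved, and $\phi(A)\phi(B)$ is again minimal. Combined with \Cref{DeltaWD}, which guarantees that $\Delta$ is independent of the chosen minimal factorization, this yields $\INV(G)=\INV(H)$.

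For the converse I would first reduce every metacyclic group to a standard presentation. Starting from a minimal factorization $G=AB$ with suitable generators $a$ of $A$ and $b$ of $B$, one obtains $G\cong\G_{m,n,s,t}$ with $m=|A|$, $n=[G:A]$, $s=[G:B]$ (note $s\mid m$, as $s=[A:A\cap B]$) and $\GEN{t}_m=T_G(A)$. The theorem then reduces to the claim that $\G_{m,n,s,t}\cong\G_{m,n,s,t'}$ whenever these two groups have equal invariant; given this, writing $G\cong\G_{m,n,s,t_0}$ and $H\cong\G_{m,n,s,t_1}$ and using $\INV(G)=\INV(H)$ produces the isomorphism $G\cong H$ (the canonical representative $t$ of \Cref{Construction} may be used to package this uniformly). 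Now $\INV(\G_{m,n,s,t})=\INV(\G_{m,n,s,t'})$ forces the two moduli $[T,n,s]$ and $[T',n,s]$ to coincide, say to $M$, since otherwise $\Delta_G$ and $\Delta_H$ would live in different groups; and equality of the $\Delta$'s reads $\Res_M(\GEN{t}_m)=\Res_M(\GEN{t'}_m)$ as subgroups of $\U_M$. Thus the two conjugation actions agree modulo $M$ and differ only in the part of $\U_m$ that becomes invisible after restriction to $\U_M$.

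The core, and the main obstacle, is to realise this residual discrepancy by a legitimate change of presentation. I would construct the isomorphism by setting $a\mapsto c^{x}$ and $b\mapsto c^{y}d^{z}$, where $c,d$ are the analogous generators of the second group, and solving for $x,y,z$ so that the defining relations $a^m=1$, $a^b=a^t$, $b^n=a^s$ are matched. Because $\U_m\cong\prod_{p\mid m}\U_{m_p}$ and $T_G(A)$ splits accordingly, the solvability localises prime by prime, separating the primes $p\in\nu=\pi(m)\setminus\pi(r)$, the odd primes of $\pi(r)$, and the prime $2$. The divisor $[T,n,s]=m_\nu\prod_{p\in\pi(r)}m'_p$ is engineered precisely so that $m'_p$ measures the rigid part of the $p$-local action, while the complementary factor $m_p/m'_p$ can be absorbed either by passing to another minimal kernel or by replacing $b$ with another generator; this is exactly what the $\min$/$\max$ in the odd-$p$ formula, and the three-way split of the $m'_2$ formula, encode. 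I expect the prime $2$ with $\epsilon=-1$ to be the delicate point: there the rigid action lies in $\GEN{-1}_{r_2}$, and the interaction of $s_2$, $n_2$ and $r_2$ produces the exceptional middle value $m'_2=m_2/2$, so one must verify by hand both that $\Delta$ is genuinely invariant (no admissible change of generators alters it) and that it is complete (every action compatible with the prescribed $\Delta$ is realised by such an isomorphism). The odd primes follow the same scheme without the $\GEN{-1}_{r_2}$ anomaly, so the $2$-local bookkeeping is where the proof must be argued most carefully.
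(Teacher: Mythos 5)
Your forward implication is fine: transport of structure plus \Cref{DeltaWD} gives $\INV(G)=\INV(H)$, and this matches the paper (where \Cref{DeltaWD} is itself a corollary of the hard result, \Cref{MetaTh}). The gap is in the converse, and it is twofold. First, the concrete ansatz $a\mapsto c^{x}$, $b\mapsto c^{y}d^{z}$ cannot produce the isomorphism in precisely the cases the theorem is about. Injectivity forces $\gcd(x,m)=1$, so any such map carries $\GEN{a}$ onto $\GEN{c}$; matching the relation $a^b=a^t$ then gives $c^{xt}=(c^{x})^{c^{y}d^{z}}=c^{xt'^{z}}$, i.e. $t\equiv t'^{z} \bmod m$, and surjectivity forces $\gcd(z,n)=1$, which (as $o_m(t')\mid n$) yields $\GEN{t}_m=\GEN{t'^{z}}_m=\GEN{t'}_m$, i.e. $T=T'$. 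But when $T=T'$ the isomorphism is already given by \Cref{ParametersTDelta}.\eqref{mDividernrs}; the entire substance of the theorem is the case $T\neq T'$ with $\Res_{m'}(T)=\Res_{\bar m'}(T')$, and in that case \emph{no} isomorphism can map $\GEN{a}$ onto $\GEN{c}$ at all, because the identification of $\Aut(\GEN{a})$ with $\U_m$ is canonical, so an isomorphism carrying one kernel onto the other would force $T_G(\GEN{a})=T_H(\GEN{c})$ exactly. In that situation the image of $a$ must involve $d$ — equivalently, one must land on a \emph{different} minimal kernel of $H$, cf. \Cref{Differentr} and the generators of the form $b^{xv}a$ built in \Cref{cdw}. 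Your own remark that the discrepancy can be absorbed "by passing to another minimal kernel" is the right idea, but it is incompatible with the ansatz you actually wrote down, and no corrected ansatz is proposed.

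Second, even granting a corrected shape for the candidate isomorphism, the proposal stops where the work begins: you state that "one must verify by hand" both that $\Delta$ is invariant under all admissible changes of generators and that every action compatible with $\Delta$ is realised, especially at $p=2$ with $\epsilon=-1$, but you do not perform either verification. That verification is exactly the content of the paper's \Cref{cdw} (the prime-power computation controlling $\Ese{R}{k}$ via \Cref{PropEse}, with the case analysis producing the modulus $w$) together with \Cref{MetaTh}, which assembles the local data over the primes using that minimal factorizations correspond to $(n,s)$-canonical subgroups (\Cref{MiniMF}). This is where all the mathematical difficulty of the theorem lives; everything else, both in your proposal and in the paper's proof, is formal bookkeeping around it. So what you have is a correct skeleton with one load-bearing piece wrong (the shape of the isomorphism) and the other missing (the local rigidity/realizability lemma).
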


Our next result describes the values realized as $\INV(G)$ with $G$ a finite metacyclic group.

\begin{maintheorem}\label{Parameters}
Let $m,n,s\in \N$ and let $\Delta$ be a cyclic subgroup of $\U_{m'}$ with $m'\mid m$.
Let $\inv{\Delta}=[r,\epsilon,o]$ and $\nu=\pi(m)\setminus \pi(r)$. Then the following conditions are equivalent:
\begin{enumerate}
 \item $(m,n,s,\Delta)=\INV(G)$ for some finite metacyclic group $G$.
 \item \begin{enumerate}
        \item\label{ParamB} $s$ divides $m$, $|\Delta|$ divides $n$ and $m_{\nu}=s_{\nu}=m'_{\nu}$.
        \item\label{Param'} \eqref{m'} holds for every $p\in \pi(r)$.
        \item\label{Param-} If $\epsilon=-1$ then $\frac{m_2}{r_2}\le n_2$, $m_2\le 2s_2$ and $s_2\ne n_2r_2$. If moreover $4\mid n$, $8\mid m$ and $o_2<n_2$ then $r_2\le s_2$.
			\item\label{Param+} For every $p\in \pi(r)$ with $\epsilon^{p-1}=1$, we have
			$\frac{m_p}{r_p}\le s_p\le n_p$ and if $r_p> s_p$ then  $n_p<s_po_p$;
	\end{enumerate}
\end{enumerate}
\end{maintheorem}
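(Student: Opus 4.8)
The statement characterises the image of the map $G\mapsto\INV(G)$, so both implications must be proved, and in each direction the natural engine is a prime-by-prime analysis. Given a metacyclic group with a minimal factorisation $G=AB$, choosing generators $a$ of $A$ and $b$ of $B$ presents $G$ as $\G_{m,n,s,t}$ with $m=|A|$, $n=[G:A]$, $s=[G:B]$ and $t$ determined by $a^b=a^t$; then $T_G(A)=\GEN{t}_m$ and the arithmetic is governed by the relations $t^n\equiv 1$ and $s(t-1)\equiv 0\pmod m$. The plan is to exploit the decomposition $\U_m\cong\prod_{p\in\pi(m)}\U_{m_p}$, under which $T\le\U_m$ projects to the cyclic subgroups $\Res_{m_p}(T)$ and under which every ingredient of the invariant -- the triple $\inv{T}=(r,\epsilon,o)$, the local moduli $m'_p$, and $[T,n,s]=m_\nu\prod_{p\in\pi(r)}m'_p$ -- is computed locally. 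One preliminary observation organises the whole argument: $2\notin\nu$ always, since $2\mid m$ forces $\Res_2(T)\subseteq\GEN{-1}_2=1$ and hence $r_2\ge 2$ and $2\in\pi(r)$, whereas $2\nmid m$ gives $2\notin\pi(m)\supseteq\nu$. This is exactly why the $\nu$-equalities in \eqref{ParamB} and the prime-$2$ condition \eqref{Param-} are stated separately.

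For $(1)\Rightarrow(2)$ I would start from a minimal factorisation $G=AB$ with $\INV(G)=(m,n,s,\Delta)$ and read off each condition. The divisibility $s\mid m$ is immediate from $s=[G:B]=|A/(A\cap B)|$, and $|\Delta|\mid n$ follows because $\Delta$ is a quotient of $T=T_G(A)\cong G/\Cen_G(A)$ while $A\le\Cen_G(A)$ gives $|T|\mid[G:A]=n$. The equalities $m_\nu=s_\nu=m'_\nu$ express that at the $\nu$-primes, where $b$ acts without a trivial eigenvalue, minimality forces $s_p=m_p$; since $m'=[T,n,s]$ has $\nu$-part exactly $m_\nu$, this yields \eqref{ParamB}. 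Conditions \eqref{Param'}, \eqref{Param-} and \eqref{Param+} are then the prime-local translations of the two relations above together with the lexicographic minimality of $(|A|,r_G(A),[G:B])$. For instance, the inequality $\tfrac{m_p}{r_p}\le s_p$ in \eqref{Param+} is precisely $s(t-1)\equiv 0\pmod m$ at $p$, since $v_p(s)+v_p(t-1)\ge v_p(m)$ with $r_p=p^{v_p(t-1)}$; the remaining inequalities encode that $B$ cannot be enlarged nor $A$ shrunk at $p$. Splitting into the cases $\epsilon^{p-1}=1$ (odd $p$, or $p=2$ without a $-1$) and $\epsilon=-1$ (the prime $2$) then produces \eqref{Param+} and \eqref{Param-} respectively.

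For $(2)\Rightarrow(1)$ I would reverse the analysis: from a tuple satisfying \eqref{ParamB}--\eqref{Param+} I reconstruct a cyclic subgroup $T\le\U_m$ with $\inv{T}=(r,\epsilon,o)$ and $\Res_{m'}(T)=\Delta$, which exists precisely because \eqref{Param'} and \eqref{ParamB} force $m'=m_\nu\prod_{p\in\pi(r)}m'_p=[T,n,s]$ and pin down the $\nu$-part. I then choose a generator $t$ of $T$, form $G=\G_{m,n,s,t}$, and use the local inequalities to verify that $s(t-1)\equiv t^n-1\equiv 0\pmod m$ holds, so that $G$ has order $mn$ with $|a|=m$, and that the factorisation $G=\GEN{a}\GEN{b}$ is minimal; then $\INV(G)=(m,n,s,\Delta)$ by \Cref{DeltaWD}. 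The existence of such a $t$ with the prescribed local behaviour, together with the minimality check, is the step I would isolate as the construction result \Cref{Construction}.

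The hard part will be the prime $2$, that is, the branch $\epsilon=-1$ of \eqref{m'} and the condition \eqref{Param-}. The reason is that $\U_{2^k}\cong\GEN{-1}\times\GEN{5}$ is non-cyclic for $k\ge 3$, so a cyclic $T$ can mix the $-1$ component with the part of order $o_2$, and the three-way definition of $m'_2$ -- with its thresholds $o_2\le 2$, $m_2\le 2r_2$, $4\le o_2<n_2$ and the clause ``$s_2=m_2<n_2r_2$'' -- records exactly which such configurations survive restriction while keeping the factorisation minimal. I expect the bulk of the casework, and the only genuinely delicate compatibilities between minimality and the well-definedness of $\Delta$ supplied by \Cref{DeltaWD}, to concentrate here; I would dispatch it by explicitly parametrising the cyclic subgroups of $\U_{2^k}$ and checking each case against the relation $s(t-1)\equiv t^n-1\equiv 0\pmod m$ and the lexicographic minimality of $(|A|,r_G(A),[G:B])$.
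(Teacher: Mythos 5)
Your overall architecture matches the paper's: both directions are run through a minimal factorization, the forward direction reads off $s\mid m$, $|\Delta|\mid n$ and the $\nu$-part equalities exactly as the paper does (via the Hall-subgroup structure of \Cref{Basic} and $|T|\mid[G:A]$), and the converse is delegated to a construction statement plus \Cref{DeltaWD}, which is how the paper couples the proof with \Cref{Construction}. But there is a genuine gap at the load-bearing step. You assert that conditions \eqref{Param-} and \eqref{Param+} are ``prime-local translations'' of the presentation relations together with lexicographic minimality, and that you would verify minimality ``by explicitly parametrising the cyclic subgroups of $\U_{2^k}$ and checking each case against the relation $s(t-1)\equiv t^n-1\equiv 0 \bmod m$.'' This cannot work as stated: minimality of $(|A|,r_G(A),[G:B])$ quantifies over \emph{all} metacyclic factorizations of $G$, not over presentations sharing the given $(m,n,s,t)$, so it is not a condition you can check inside $\U_m$ at all. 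Inequalities such as $s_p\le n_p$, the clause ``if $r_p>s_p$ then $n_p<s_po_p$'', and $s_2\ne n_2r_2$ do \emph{not} follow from the two congruences; they are obtained in the paper only by constructing competing kernels and complements inside $G$ --- e.g.\ the complement $\GEN{ba_p}$ in Claim 3 of \Cref{MiniMF}, the kernel $\GEN{b_p^{n/s_p}a_{p'}a_p^x}$ in Claim 4, and the kernel $C=\GEN{b^{nm_{2'}/2s_{2'}}a}$ of \Cref{Differentr} --- and showing these beat the given factorization whenever the arithmetic conditions fail.

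What your plan is missing, concretely, is the paper's keystone: \Cref{MiniMF}, the equivalence between minimality of $G=AB$ and the intrinsic arithmetic condition that $T_G(A)$ be $(n,s)$-canonical (conditions (Can$+$) and (Can$-\!-$)). Both directions of \Cref{Parameters} reduce to this equivalence in a few lines; without it, your forward direction has no argument for \eqref{Param-}, \eqref{Param+} beyond the divisibility $\frac{m_p}{r_p}\le s_p$ (which you do derive correctly), and your converse has no way to certify that the factorization $\G_{m,n,s,t}=\GEN{a}\GEN{b}$ you build is actually minimal, hence no way to conclude $\INV(G)=(m,n,s,\Delta)$. Proving that equivalence is where the real work lies (\Cref{Differentr}, \Cref{cdw}, and the four claims of \Cref{MiniMF}, including the subtle fact that $r_G(A)$ can take two distinct values on minimal kernels when $\epsilon=-1$); it is group-theoretic element manipulation inside $G$, not casework on cyclic subgroups of $\U_{2^k}$, and your proposal gives no substitute for it.
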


Our last result  shows how to construct a metacyclic group $G$ with given $\INV(G)$:
If $m,n,s\in \N$ with $s\mid m$ then we define the following subgroup of $\U_m$:
$$\U_m^{n,s} = \{ [t]_m : m \mid s(t-1), \qand t^n \equiv 1 \mod m\}.$$
If $T$ is a cyclic subgroup of $\U_m^{n,s}$ generated by $[t]_m$ then we denote
	$$\G_{m,n,s,T} = \G_{m,n,s,t} = \{a,b: a^m=1, b^n=a^s, a^b=a^t\}.$$
It is easy to see that the isomorphism type of this group is independent of the election of the generator $[t]_m$  of $T$ (\Cref{ParametersTDelta}.\eqref{mDividernrs}). Moreover, the assumption $T\subseteq \U_m^{n,s}$ warranties that $|a|=m$, $|\G_{m,n,s,T}|=mn$ and $|b|=\frac{mn}{s}$.

\begin{remark}\label{ConsRemark}
Suppose that $m,n,s$ and $\Delta\le \U_{m'}$ satisfy the conditions of statement (2) in \Cref{Parameters} and $\inv{\Delta}=(r,\epsilon,o)$.
Then $\Res_{m'_p}(\Delta)=\GEN{\epsilon^{p-1}+r_p}_{m'_p}$ for every $p\in\pi(r)$ and hence there is an integer $t'$ such that $\Delta=\GEN{t'}_{m'}$ and $t'\equiv \epsilon^{p-1}+r_p\mod m'_p$ for every $p\in \pi(r)$. Using the Chinese Remainder Theorem we can select an integer $t$ such that $t\equiv t'\mod m'$ and $t\equiv \epsilon^{p-1}+r_p \mod m_p$ for every $p\in \pi(r)$ and let $T=\GEN{t}_m$. 
Then $T \subseteq \U_n^{n,s}$, $\Res_{m'}(T)=\Delta$ and $[T]=[\Delta]$.
Then the following theorem ensures that $\INV(G_{m,n,s,T})=(m,n,s,\Delta)$.
\end{remark}

\begin{maintheorem}\label{Construction}
Let $m,n,s\in \N$ and let $\Delta$ be a cyclic subgroup of $\U_{m'}$ with $m'\mid m$. Suppose that they satisfy the conditions of (2) in \Cref{Parameters} and let $T$ be a cyclic subgroup of $\U_m^{n,s}$ such that $[T]=[\Delta]$ and $\Res_{m'}(T)=\Delta$.
Then $(m,n,s,\Delta)=\INV(\G_{m,n,s,T})$.
\end{maintheorem}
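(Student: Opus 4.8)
The plan is to evaluate $\INV(\G_{m,n,s,T})$ directly on the tautological factorization and to match the outcome against $(m,n,s,\Delta)$. Write $G=\G_{m,n,s,T}$ and fix a generator $[t]_m$ of $T$, so that $a^m=1$, $b^n=a^s$ and $a^b=a^t$. Since $T\subseteq\U_m^{n,s}$ we know at the outset that $|a|=m$, $|G|=mn$ and $|b|=\frac{mn}{s}$; hence $A=\GEN{a}$ is a normal cyclic subgroup of order $m$, $B=\GEN{b}$ is cyclic, $G=AB$, and $G=AB$ is a metacyclic factorization with $[G:A]=n$ and $[G:B]=s$. Because $A$ is abelian, conjugation by $a$ restricts to the identity on $A$ whereas conjugation by $b$ restricts to $\sigma_A([t]_m)$; thus $\Inn_G(A)=\sigma_A(T)$ and $T_G(A)=T$, so that $(r_G(A),\epsilon_G(A),o_G(A))=[T]=[\Delta]=(r,\epsilon,o)$.

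Granting for the moment that $G=AB$ is a minimal factorization, \Cref{DeltaWD} shows that $\INV(G)$ may be read off from it: its first three coordinates are $(|A|,[G:A],[G:B])=(m,n,s)$ and its fourth is $\Delta(AB)=\Res_{[T,n,s]}(T)$. It then suffices to identify the modulus $[T,n,s]$ with $m'$. By definition $[T,n,s]=m_\nu\prod_{p\in\pi(r)}m'_p$; condition (a) gives $m_\nu=m'_\nu$, and condition (b) asserts precisely that for every $p\in\pi(r)$ the $p$-part of $m'$ equals the quantity $m'_p$ prescribed by \eqref{m'}. As $r\mid m$ forces $\pi(m)=\nu\sqcup\pi(r)$ and $m'\mid m$, multiplying these local identities over all primes gives $[T,n,s]=m'$. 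Hence $\Delta(AB)=\Res_{m'}(T)=\Delta$, the final equality being a hypothesis, and $\INV(G)=(m,n,s,\Delta)$ follows.

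The entire weight of the theorem thus falls on proving that $G=AB$ is a minimal factorization, that is, that the triple $(|A|,r_G(A),[G:B])=(m,r,s)$ is lexicographically least among all metacyclic factorizations of $G$. I would establish this coordinate by coordinate. First, that $A$ is a minimal kernel, so that $m_G=m$: any competing kernel of order less than $m$ would be a normal cyclic subgroup with cyclic quotient, and the lower bounds encoded in \eqref{m'} together with the divisibility clauses of condition (a) and the inequalities of conditions (c)--(d) are exactly what rule this out. Second, that $r_G(A)=r$ is least among the kernels of order $m$. Third, that $s$ is least among the indices $[G:B']$ produced by factorizations with $|A'|=m$ and $r_G(A')=r$. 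For all three I would argue one prime at a time, restricting to the Sylow components of $A$ and of the subgroups $T_G(A')\le\U_m$, and reusing the earlier structural description of how a kernel and its associated subgroup of $\U_m$ transform when one passes from one kernel to another.

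The main obstacle will be this minimality analysis at the prime $2$. For an odd prime $p$ one always has $\epsilon^{p-1}=1$, so only the single inequality of condition (d) is in force and it pins the local data down cleanly. At $p=2$, by contrast, both the definition of $m'_2$ in \eqref{m'} and the hypotheses of condition (c) split into several regimes, separated by the comparisons $o_2\le 2$, $m_2\le 2r_2$, $4\le o_2<n_2$ and $s_2=m_2$; each regime corresponds to a genuinely different mechanism by which a smaller kernel or a cheaper complement could arise, and the threshold conditions $s_2\ne n_2r_2$ and ``if $4\mid n$, $8\mid m$ and $o_2<n_2$ then $r_2\le s_2$'' are exactly what distinguish the case in which $(m,r,s)$ is minimal from the case in which it is not. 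I would therefore dispose of the odd primes first and then carry out the $2$-local case split, checking in each regime that no factorization beats $(m,r,s)$ lexicographically; once this is secured, the computation of the second paragraph concludes the proof.
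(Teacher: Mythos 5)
Your first two paragraphs are correct and coincide with the paper's own argument: you set up the tautological factorization $G=AB$ with $T_G(A)=T$, note that $T\subseteq\U_m^{n,s}$ forces $|A|=m$, $[G:A]=n$, $[G:B]=s$, and use conditions (a) and (b) of \Cref{Parameters}(2) to identify $[T,n,s]=m'$, whence $\Delta(AB)=\Res_{m'}(T)=\Delta$. The problem is everything after that. You correctly observe that ``the entire weight of the theorem'' falls on proving that $G=AB$ is a \emph{minimal} factorization, but you never prove it: paragraphs three and four are a program (``I would establish\dots'', ``I would argue one prime at a time\dots'', ``I would therefore dispose of the odd primes first\dots''), not an argument. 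What you propose to do there --- compare $(|A|,r_G(A),[G:B])$ against all competing factorizations, prime by prime, with a multi-regime case split at $p=2$ governed by $o_2\le 2$, $m_2\le 2r_2$, $s_2\ne n_2r_2$, etc. --- is precisely the content of \Cref{Differentr} and of Claims 1 and 2 in the proof of \Cref{MiniMF}, i.e.\ the bulk of Section 3 of the paper. Re-deriving that from scratch is a substantial task, and your sketch does not carry out any of it; as written, the hardest step of the theorem is missing.

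What you overlooked is that this work is already packaged for you. \Cref{MiniMF} states that a metacyclic factorization $G=AB$ with parameters $(m,n,s)$ and $T=T_G(A)$ is minimal \emph{if and only if} $T$ is $(n,s)$-canonical, and conditions \eqref{Param-} and \eqref{Param+} of \Cref{Parameters}(2) are exactly (indeed slightly stronger than) the canonicity conditions (Can--) and (Can+): condition (c) gives $s_2\ne r_2n_2$ and the implication $r_2\le s_2$ under $4\mid n$, $8\mid m$, $o_2<n_2$; condition (d) gives $s_p\mid n$ and ($r_p\mid s$ or $s_po_p\nmid n$) for each $p\in\pi(r)$ with $\epsilon^{p-1}=1$. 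So the minimality you defer is a one-line citation: $T$ is $(n,s)$-canonical by (c) and (d), hence $G=AB$ is minimal by \Cref{MiniMF}, and your second paragraph then finishes the proof. This is the paper's route, and without either that citation or a completed version of your own $2$-local analysis, your attempt has a genuine gap.
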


For implementation it is convenient to replace the fourth entry of $\INV(G)$ by a distinguished integer $t_G$ so that $G\cong \G_{m_G,n_G,s_G,t_G}$  and $G\cong H$ if and only if $(m_G,n_G,s_G,t_G)=(m_H,n_H,s_H,t_H)$. 
We select $t_G$ satisfying the conditions of \Cref{ConsRemark}. In particular,  $[t_G]_{m_{\pi}}$  is uniquely determined by the condition $t\equiv \epsilon^{p-1}+r_p\mod m_p$ for every $p\in \pi(r)$. However there is not any natural election of $[t_G]_{m_{\pi'}}$ and we simply take the minimum possible value. More precisely, if $(m,n,s,\Delta)=\INV(G)$, $(r,\epsilon,o)=\inv{\Delta}$ and $m'$ is given by \eqref{m'} then define
$$t_G = \min \{t\ge 0 : \Res_{m'}(\GEN{t}_m)=\Delta \qand
t \equiv \epsilon^{p-1}+r_p \mod m_p \text{ for every } p\in \pi(r)\}.$$
We call $(m_G,n_G,s_G,t_G)$ the list of \emph{metacyclic invariants} of $G$.
Clearly if $H$ is another metacyclic group then $G\cong H$ if and only if $G$ and $H$ have the same metacyclic invariants.
Moreover, by \Cref{Construction}, if $(m,n,s,t)$ is the list of metacyclic invariants of $G$ then $G\cong \G_{m,n,s,t}$. 

We outline the contains of the paper: 
In \Cref{SectionNotation} we introduce the general notation, not mentioned in this introduction, and present some preliminary technical results. In \Cref{SectionMF} we prove several lemmas on metacyclic factorizations aiming to an intrinsic description of when a metacyclic factorization is minimal. It includes an algorithm to obtain a minimal metacyclic factorization from an arbitrary one. This section concludes with \Cref{MetaTh} which is the keystone to prove \Cref{Invariants}, \Cref{Parameters} and \Cref{Construction} in \Cref{SectionProofs}. In  \Cref{SectionImplementation} we introduce an algorithm to compute the metacyclic invariants of a given metacyclic group and use this to decide if two metacyclic groups are isomorphic, and another algorithm to construct all the metacyclic groups of a given order. We present also implementations in GAP \cite{GAP} of these algorithms.

\section{Notation and preliminaries}\label{SectionNotation}

By default all the groups in this paper are finite. 
We use standard notation for a group $G$:
$Z(G)=$ center of $G$, $G'=$ commutator subgroup of $G$, $\Aut(G)=$ group of automorphisms of $G$.
If $g,h\in G$ then 
$|g|=$ order of $g$, $g^h=h^{-1}gh$, $[g,h]=g^{-1}g^h$.
If $\pi$ is a set of primes then $g_{\pi}$ and $g_{\pi'}$ denote the $\pi$-part and $\pi'$-part of $g$, respectively.
When $p$ is a prime we rather write $g_p$ and $g_{p'}$ than $g_{\{p\}}$ and $g_{\{p\}'}$, respectively.
Similarly, if $G$ is a finite abelian group then $G_p$ and $G_{p'}$ denote the $p$-part of $G$ and the $p'$-part of $G$, respectively.

Let $G$ be a metacyclic group.
Observe that $A$ is a kernel of $G$ if and only if $G$ has a metacyclic factorization of the form $G=AB$. In that case, if
	$$m=|A|, \quad n=[G:A], \quad s=[G:B] \qand T=T_G(A)=\GEN{t}_m,$$
then $s\mid m$, $|B|=n\frac{m}{s}$, $T\subseteq \U^{n,s}_m$ and $A$ and $B$ have generators $a$ and  $b$, respectively, such that $b^n=a^s$ and $a^b=a^t$. Thus $G\cong \G_{m,n,s,T}$.

	If $p$ is a prime then $v_p$ denotes  the $p$-adic valuation on the integers. 
	
	Let $a\in \Z$ and $m\in \N$. If $\gcd(a,m)=1$ then $o_m(a)$ denotes the order of $[a]_m$ i.e. $o_m(a)=\min\{n\in \N : a^n \equiv 1 \mod m\}$. 
	If $a\ne 0$ then we denote
	$$\Ese{a}{m}=\sum_{i=0}^{m-1} a^i = 
	\begin{cases} m, & \text{ if } a=1; \\
	\frac{a^m-1}{a-1}, & \text{otherwise}.
	\end{cases}$$
	This notation occurs in the following statement where $g$ and $h$ are elements of a group: 
	\begin{equation}\label{Potencia}
	\text{If } g^h=g^a \text{ then } (hg)^m = h^m g^{\Ese{a}{m}}.
	\end{equation}

The following lemma collects some useful properties of the operator $\Ese{-}{-}$ which will be used throughout.

\begin{lemma}\label{PropEse}
	Let $p,R,m\in N$ with $p$ prime and suppose that $R\equiv 1 \mod p$.
	\begin{enumerate}
		\item Suppose that either $p\ne 2$ or $p=2$ and $R\equiv 1 \mod 4$.
		Then
		\begin{enumerate}
			\item\label{vpGeneral} $v_p(R^m-1)=v_p(R-1)+v_p(m)$ and $v_p(\Ese{R}{m})=v_p(m)$.
			\item\label{opGeneral} $o_{p^m}(R)=p^{\max(0,m-v_p(R-1))}$.
			\item\label{PotenciasGeneral} If $a=v_p(R-1)\le m$ then
			$\GEN{R}_{p^m}=\{[1+yp^a]_{p^m} : 0\le y < p^{m-a}\}$.
		\end{enumerate}
		\item Suppose that $R\equiv -1 \mod 4$. Then
		\begin{enumerate}
			\item\label{vpEspecial} $v_2(R^m-1)=\begin{cases} v_2(R+1)+v_2(m), & \text{if } 2\mid m;\\ 1, & \text{otherwise};\end{cases}$
			\\ and
			$v_2(\Ese{R}{m})=\begin{cases} v_2(R+1)+v_2(m)-1, & \text{if } 2\mid m;\\ 0, & \text{otherwise};\end{cases}$.
			\item\label{opEspecial} $o_{2^m}(R)=\begin{cases} 1, & \text{if } m\le 1; \\ 2^{\max(1,m-v_2(R+1))}, & \text{otherwise}\end{cases}$.
			\item\label{vpEspecial+} $v_2(R^m+1) = \begin{cases} v_2(R+1), & \text{if } 2\nmid m; \\ 1, & \text{otherwise}. \end{cases}$.
		\end{enumerate}
	\end{enumerate}
\end{lemma}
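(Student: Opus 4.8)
The plan is to reduce everything to the Lifting the Exponent Lemma (LTE), in its two relevant forms: for an odd prime $p$ with $p\mid R-1$ one has $v_p(R^m-1)=v_p(R-1)+v_p(m)$, and the same identity holds for $p=2$ provided $4\mid R-1$. I would either cite LTE or include its short inductive proof, the essential step being the reduction to exponent $p$ via the factorization $R^{pk}-1=(R^k-1)\,\Ese{R^k}{p}$ together with the computation $v_p(\Ese{R^k}{p})=1$.

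For part (1), the first identity of \eqref{vpGeneral} is exactly LTE under the stated hypothesis ($p$ odd, or $p=2$ and $R\equiv 1 \mod 4$). The second follows since $\Ese{R}{m}=(R^m-1)/(R-1)$ when $R\ne 1$, and $\Ese{1}{m}=m$ when $R=1$, so $v_p(\Ese{R}{m})=v_p(R^m-1)-v_p(R-1)=v_p(m)$. For \eqref{opGeneral}, $o_{p^m}(R)$ is the least $n$ with $v_p(R^n-1)\ge m$; by \eqref{vpGeneral} this reads $v_p(R-1)+v_p(n)\ge m$, i.e. $v_p(n)\ge m-v_p(R-1)$, whose least solution is $p^{\max(0,m-v_p(R-1))}$. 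For \eqref{PotenciasGeneral}, put $a=v_p(R-1)\le m$; by \eqref{opGeneral} the group $\GEN{R}_{p^m}$ has order $p^{m-a}$, and since $R\equiv 1 \mod p^a$ every power satisfies $R^k\equiv 1 \mod p^a$, so $\GEN{R}_{p^m}$ is contained in the set $\{[1+yp^a]_{p^m}:0\le y<p^{m-a}\}$, which has exactly $p^{m-a}$ elements; equality of cardinalities gives the claim.

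For part (2) the hypothesis $R\equiv -1 \mod 4$ gives $v_2(R-1)=1$ and $R^2\equiv 1 \mod 8$, which lets me apply part (1) to $R^2$ at $p=2$, after splitting on the parity of $m$. For \eqref{vpEspecial}: if $m$ is odd then $R^m\equiv 3 \mod 4$, so $v_2(R^m-1)=1$; if $m$ is even then $v_2(R^m-1)=v_2((R^2)^{m/2}-1)=v_2(R^2-1)+v_2(m/2)=(1+v_2(R+1))+(v_2(m)-1)=v_2(R+1)+v_2(m)$, and the formula for $v_2(\Ese{R}{m})$ follows via $v_2(\Ese{R}{m})=v_2(R^m-1)-1$. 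For \eqref{opEspecial} with $m\ge 2$, odd exponents give $v_2(R^n-1)=1<m$, so the order must be even, and among even $n$ the condition $v_2(R+1)+v_2(n)\ge m$ forces the least solution $2^{\max(1,m-v_2(R+1))}$; the case $m\le 1$ is the trivial group $\U_2$. For \eqref{vpEspecial+}: if $m$ is odd then $R^m+1=(R+1)\sum_{i=0}^{m-1}(-1)^iR^{m-1-i}$ and the cofactor is odd (modulo $2$ it is an alternating sum of $m$ ones, hence $1$), so $v_2(R^m+1)=v_2(R+1)$; if $m$ is even then $R^m\equiv 1 \mod 8$, whence $R^m+1\equiv 2 \mod 8$ and $v_2(R^m+1)=1$.

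The main obstacle is the $p=2$ analysis: LTE fails in its naive form there, and the device that rescues the argument is passing to $R^2\equiv 1 \mod 8$ so that the odd-prime-style computation applies, after which one must split carefully on the parity of $m$ and track the boundary cases that produce the $\max(0,\cdot)$ and $\max(1,\cdot)$ in the exponents.
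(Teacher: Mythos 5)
Your proposal is correct and follows essentially the same route as the paper: part (1) via the inductive/LTE identity $v_p(R^m-1)=v_p(R-1)+v_p(m)$ together with the factorization $R^m-1=(R-1)\Ese{R}{m}$ and the containment-plus-cardinality argument for \eqref{PotenciasGeneral}, and part (2) by splitting on the parity of $m$ and passing to $R^2$ so that case (1) applies. The only differences are in the micro-arguments for \eqref{vpEspecial+} (your alternating-sum factorization and $R^m\equiv 1 \bmod 8$ versus the paper's observation that exactly one of $v_2(R^m\pm 1)$ equals $1$), which are immaterial.
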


\begin{proof}
	\eqref{vpGeneral} The first equality can be easily proven by induction on $m$. Then the second follows from $R^m-1=(R-1)\Ese{R}{m}$.

	\eqref{opGeneral} is a direct consequence of \eqref{vpGeneral}.

	\eqref{PotenciasGeneral} By \eqref{vpGeneral} we have $\GEN{R}_{p^m}\subseteq \{[1+yp^a]_{p^m} : 0\le y < p^{m-a}\}$ and by \eqref{opGeneral} the first set has $p^{m-a}$ elements. As the second one has the same cardinality, equality holds.

	\eqref{vpEspecial} Suppose that $R\equiv -1 \mod 4$.
	If $2\nmid m$ then $R^m\equiv -1 \mod 4$ and hence $v_2(R^m -1)=1$.
	As $R^2 \equiv 1 \mod 4$, if $2\mid m$ then, by \eqref{vpGeneral} we have $v_2(R^m-1) = v_2((R^2)^\frac{m}{2}-1) = v_2(R^2-1)+v_2\left(\frac{m}{2}\right) = v_2(R+1)+v_2(m)$.
	This proves the first part of \eqref{vpEspecial}. Then the second part follows from $R^m-1=(R-1)\Ese{R}{m}$.

	\eqref{opEspecial} follows easily from \eqref{vpEspecial}.

	\eqref{vpEspecial+} Since $R$ is odd, both $R^m-1$ and $R^m+1$ and are even and exactly one of  $v_2(R^m-1)$ and $v_2(R^m+1)$ equals $1$.
	Thus, from \eqref{vpEspecial} we deduce that if $2\mid m$ then $v_2(R^m+1)=1$.
	Suppose otherwise that $m$ is odd and greater than $2$.
	Then $v_2(R^{m-1}-1)=v_2(R+1)+v_2(m-1)>v_2(R+1)$, so that $v_2(R^m+1)=v_2(R(R^{m-1}-1+1)+1) = v_2(R+1+R(R^{m-1}-1))=v_2(R+1)$.

\end{proof}

The following lemma follows by straightforward arguments.

\begin{lemma}\label{ParametersTDelta}
	Let $m,n,s\in \N$, let $T$ be a cyclic subgroup of $\U_m$, and denote $(r,\epsilon,o)=\inv{T}$, $m'=[T,n,s]$ and $\Delta=\Res_{m'}(T)$. 
	\begin{enumerate}
		\item\label{DetAreo} If $T=\GEN{t}_m$ then $|T|=o_m(t)$, $r_{2'}=\gcd(m_{2'},t-1)$,  $r_2=\max(\gcd(m_2,t-1),\gcd(m_2,t+1))=\gcd(m_2,t-\epsilon)$ and $o=o_{m_{\nu}}(t)_{\nu'}$ with $\nu=\pi(m)\setminus \pi(r)$.
		\item $r\mid m'\mid m$ and $\pi(m)=\pi(m')$.
		\item $\inv{T}=\inv{\Delta}$.
		\item For every $p\in \pi(r)$ we have $\Res_{m_p}(T_p)=\GEN{\epsilon^{p-1}+r_p}_{m_p}$ and 
		$$|\Res_{m_p}(T_p)| = \begin{cases} 2, & \text{if } p=2, \epsilon=-1 \text{ and } r_2=m_2; \\ \frac{m_p}{r_p}, & \text{otherwise}.\end{cases}$$
		\item\label{mDividernrs} If $s\mid m$ and $T\subseteq \U_m^{n,s}$ then
		$m_{\pi(r)}\mid rn$, $m_{\pi(r)} \mid rs$, $o \mid n_{\pi(m)\setminus \pi(r)}$ and if $\epsilon=-1$  then
		$m_2 \in \{s_2,2s_2\}$.
		If moreover $T=\GEN{t}_m=\GEN{u}_m$ then there is a $k\in \N$ with $\gcd(k,|T|)=1$ and $a\mapsto a^k$, $b\mapsto b^k$ defines an isomorphism $\G_{m,n,s,t}\rightarrow \G_{m,n,s,u}$.
	\end{enumerate}
\end{lemma}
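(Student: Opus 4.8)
The five items are largely independent, and the common engine is the Chinese Remainder decomposition $\U_m\cong\prod_{p\mid m}\U_{m_p}$ together with \Cref{PropEse} for the arithmetic of $\U_{p^k}$. I use repeatedly that $\Res_q=\Res_q\circ\Res_{q'}$ whenever $q\mid q'\mid m$, and that the image of a Hall subgroup under a homomorphism of abelian groups is the corresponding Hall subgroup of the image (as $\phi(T)=\phi(T_\pi)\times\phi(T_{\pi'})$). For (1), $|T|=o_m(t)$ is immediate, and the maximality defining $r$ separates along coprime parts: $\Res_{r_{2'}}(T)=1$ means $r_{2'}\mid t-1$, the largest odd divisor of $m$ with this property being $\gcd(m_{2'},t-1)$, while $\Res_{r_2}(T)\subseteq\GEN{-1}_{r_2}$ means $r_2\mid t-1$ or $r_2\mid t+1$, the largest $2$-power divisor of $m$ being $\max(\gcd(m_2,t-1),\gcd(m_2,t+1))$; since $t$ is odd these two gcds have gcd dividing $2$, so at most one exceeds $2$ and $\epsilon$ selects it, giving $r_2=\gcd(m_2,t-\epsilon)$. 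The value of $o$ is the Hall-image fact applied to $\Res_{m_\nu}$, identifying $\Res_{m_\nu}(T_{\nu'})$ with the Hall $\nu'$-part of $\GEN{t}_{m_\nu}$. For (2), a case check of \eqref{m'} shows each $m'_p$ is a $p$-power with $r_p\le m'_p\le m_p$: in the first branch the factor $r_p\frac{s_po_p}{n_p}$ is a $p$-power exactly when $\ge1$, so it wins the inner maximum only as a genuine $p$-power $\ge r_p$, and the outer minimum of $p$-powers stays in $[r_p,m_p]$; in the second branch the middle case forces $m_2\ge4r_2$, so $r_2,\tfrac{m_2}2,m_2$ all lie in $[r_2,m_2]$. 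As these are $p$-powers the inequalities become divisibilities, giving $r\mid m'\mid m$, and $\pi(m')=\pi(m)$ follows from $(m')_\nu=m_\nu$ and $r_p\mid m'_p$.

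For (3), restriction through $r\mid m'$ gives $\Res_r(\Delta)=\Res_r(T)$ and $\Res_{r_2}(\Delta)=\Res_{r_2}(T)$; since the property defining $r$ is closed under $\lcm$ and under divisors, its greatest solution is the same in $\U_{m'}$ as in $\U_m$, so $r_\Delta=r$ and then $\epsilon_\Delta=\epsilon$, while $(m')_\nu=m_\nu$ and $\Res_{m_\nu}(\Delta)=\Res_{m_\nu}(T)$ give $o_\Delta=o$. For (4) I argue prime by prime. If $p$ is odd, or $p=2$ with $\epsilon=1$, then $t\equiv1\pmod{r_p}$ with $r_p=p^{v_p(t-1)}$ (and $t\equiv1\bmod4$ when $p=2$, $m_2\ge4$), so \Cref{PropEse}.\eqref{PotenciasGeneral} gives $\Res_{m_p}(T_p)=\GEN{1+r_p}_{m_p}$ of order $m_p/r_p$. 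If $p=2$ and $\epsilon=-1$, set $b=v_2(t+1)$, so $t\equiv-1\bmod 2^b$: when $r_2=m_2$ one gets $\GEN{-1}_{m_2}$ of order $2$, and otherwise both $-t$ and $-(r_2-1)$ generate the subgroup $1+2^b\Z/m_2\Z$ by \Cref{PropEse}.\eqref{PotenciasGeneral}, whereas $t\equiv r_2-1\pmod{2^{b+1}}$, which forces $\GEN{t}_{m_2}=\GEN{r_2-1}_{m_2}$ of order $m_2/r_2$.

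For (5), $T\subseteq\U_m^{n,s}$ yields $o_m(t)=|T|\mid n$ from $t^n\equiv1$, and $v_p(t-1)\ge v_p(m)-v_p(s)$ for all $p$ from $m\mid s(t-1)$. Then $|\Res_{m_p}(T_p)|$ divides $|T_p|\mid n_p$ and equals $m_p/r_p$ (or $1$), giving $m_{\pi(r)}\mid rn$; combining the valuation inequality with the gcd description of $r$ in (1) (using $v_2(t-1)=1$ and $r_2\ge4$ when $\epsilon=-1$) gives $v_p(m)\le v_p(r)+v_p(s)$ for $p\in\pi(r)$, i.e. $m_{\pi(r)}\mid rs$; $o\mid o_{m_\nu}(t)\mid|T|\mid n$ gives the divisibility of $o$; and for $\epsilon=-1$, $v_2(t-1)=1$ forces $v_2(m)\le v_2(s)+1$, so $s_2\mid m_2$ yields $m_2\in\{s_2,2s_2\}$. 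For the isomorphism, write $[u]_m=[t]_m^c$ with $\gcd(c,|T|)=1$ and, by the Chinese Remainder Theorem, pick $k\equiv c^{-1}\pmod{|T|}$ coprime to $mn$ (automatic at primes dividing $|T|$, imposed at the remaining primes of $mn$); then $\gcd(k,|T|)=1$, the assignment $a\mapsto\ov{a}^{\,k}$, $b\mapsto\ov{b}^{\,k}$ preserves $a^m=1$ and $b^n=a^s$ at once, and preserves $a^b=a^t$ because after cancelling $k$ it reduces to $u^k\equiv t\pmod m$, i.e. $ck\equiv1\pmod{|T|}$; since $\gcd(k,m)=\gcd(k,n)=1$ the map is surjective, hence an isomorphism by the equality of orders.

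The one genuinely delicate point is item (4) in the case $p=2$, $\epsilon=-1$, where $\U_{2^k}$ is non-cyclic; the identification $\GEN{t}_{m_2}=\GEN{r_2-1}_{m_2}$ rests on the congruence $t\equiv r_2-1\pmod{2^{b+1}}$, which is what upgrades ``same subgroup after negation'' to ``same subgroup''. A lesser subtlety is checking in item (2) that \eqref{m'} indeed returns a $p$-power inside $[r_p,m_p]$, since the quotient appearing in the inner maximum need not a priori be integral.
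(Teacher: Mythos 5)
The paper offers no proof of this lemma to compare against --- it is dispatched with the single sentence ``The following lemma follows by straightforward arguments'' --- so your write-up supplies exactly the verification the authors omit, and it is correct. The two points you single out as delicate are handled soundly. In item (4), the only place where non-cyclicity of $\U_{2^k}$ matters is $p=2$, $\epsilon=-1$ with $r_2<m_2$, and your congruence $t\equiv -1+r_2 \pmod{2^{b+1}}$ (with $b=v_2(t+1)$, so $r_2=2^b$) really does force equality of the subgroups: $t(-1+r_2)^{-1}\equiv 1 \pmod{2^{b+1}}$, and by \Cref{PropEse}.\eqref{PotenciasGeneral} the classes $\equiv 1 \pmod{2^{b+1}}$ in $\U_{m_2}$ form exactly $\GEN{(-1+r_2)^2}_{m_2}$, whence $t\in\GEN{-1+r_2}_{m_2}$ and equality follows from the order count you already made via \Cref{PropEse}.\eqref{opEspecial}. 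In item (2), your observation that $r_p\frac{s_po_p}{n_p}$ can win the inner maximum only when it is an integral $p$-power is the right way to see that $m'_p$ is a $p$-power in $[r_p,m_p]$; note this implicitly adopts the sensible reading of the middle case of \eqref{m'} (namely $4r_2\le m_2$, equivalently $4r_2\mid m$, rather than the literal $4r_2\le m$), without which the cases of \eqref{m'} would overlap and $r\mid m'$ could fail --- your reading is clearly the intended one, as it is the only one compatible with item (2) itself and with the parallel condition $4r\mid m$ in \Cref{cdw}.

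One discrepancy should be made explicit. In item (5) the statement reads $o\mid n_{\pi(m)\setminus\pi(r)}$, but your chain $o\mid o_{m_\nu}(t)\mid |T|\mid n$ proves $o\mid n$, which is not the same thing; and in fact the statement as literally written is false. Take $m=3$, $n=2$, $s=3$, $T=\GEN{2}_3$ (so $\G_{m,n,s,T}\cong S_3$): then $r=1$, $\nu=\{3\}$, $o=2$, yet $n_{\nu}=1$. Since $o=|\Res_{m_\nu}(T_{\nu'})|$ is the order of a homomorphic image of the $\nu'$-group $T_{\nu'}$, $o$ is always a $\nu'$-number, so the intended claim must involve the complementary set of primes, $o\mid n_{(\pi(m)\setminus\pi(r))'}$ --- compare condition (A) of \Cref{mnsrep}, where the divisibility is stated as $o\mid n_\pi$ with $\pi=\pi(mn)\setminus\pi'$. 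Your proof does yield this corrected statement: $o\mid n$ together with the Hall-image fact you invoke at the start (which makes $o$ a $\nu'$-number) gives $o\mid n_{\nu'}$. But as written, asserting that $o\mid n$ ``gives the divisibility of $o$'' papers over the mismatch between what you prove and what the lemma literally says; you should state the corrected divisibility and note why the literal one cannot be meant.
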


\begin{definition}
Given $m,n,s\in \N$ with $s\mid m$ and a cyclic subgroup of $\U_m$, we say that $T$ is $(n,s)$-\emph{canonical} if $T\subseteq \U_m^{n,s}$ and if $(r,\epsilon,o)=\inv{T}$ then the following conditions are satisfied:
\begin{itemize}
	\item[(Can--)]\label{MinMF-} If $\epsilon=-1$ then $s_2\ne r_2n_2$. If moreover, $m_2\ge 8$, $n_2\ge 4$, $o_2<n_2$ then $r_2\le s_2$.	
	\item[(Can+)]\label{MinMF+} For every $p\in \pi$ with $\epsilon^{p-1}=1$ we have $s_p\mid n$ and $r_p\mid s$ or $s_po_p\nmid n$.
\end{itemize}
\end{definition}

\section{Metacyclic factorizations}\label{SectionMF}

In this section $G$ is a finite metacyclic group.
Moreover we fix the following notation:
\begin{eqnarray*}
\pi &=& \text{set of prime divisors of } |G| \text{ such that } G \text{ has a normal Hall } p'\text{-subgroup}, \\
\pi'&=&\pi(|G|)\setminus \pi, \\
o_G &=& |\Inn_G({G'}_{\pi'})|_{\pi}.
\end{eqnarray*}

In our first lemma we show that $\pi, \pi'$ and $o_G$ are determined by any kernel of $G$.

\begin{lemma}\label{Basic}
Let $G=AB$ be a metacyclic factorization and let $m=|A|$, $s=[G:A]$, $r=r_G(A)$ and $o=o_G(A)$.
Then
\begin{enumerate}
	\item\label{BasicHall} For every set of primes $\mu$, $A_\mu B_\mu$ is a Hall $\mu$-subgroup of $G$.
	\item\label{Basicpi} $p\in \pi'$ if and only if $G' \setminus Z(G)$ has an element of order $p$ if and only if $A \setminus Z(G)$ has an element of order $p$.
	\item\label{BasicG'} ${G'}_{\pi'}=A_{\pi'}$ and $A_{\pi'}\cap B_{\pi'}=1$.
	\item\label{BasicResume} $\pi'=\pi(m)\setminus \pi(r)$, $s_{\pi'}=m_{\pi'}$ and $o=o_G$.
	\item\label{BasicSemi} $G=A_{\pi'}\rtimes \left(B_{\pi'} \times \prod_{p\in \pi} A_pB_p\right)$. In particular $[B_{p'},A_p]=1$ for every $p\in \pi$.
\end{enumerate}
\end{lemma}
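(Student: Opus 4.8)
The plan is to fix generators $a$ of $A$ and $b$ of $B$ with $a^b=a^t$, so that the whole conjugation action of $G$ on the abelian normal subgroup $A$ is governed by $T=T_G(A)=\GEN{t}_m$; this lets me translate each group-theoretic assertion into arithmetic of $[t]_m$, handled by \Cref{PropEse} and \Cref{ParametersTDelta}. Statement (1) comes first and is quick: since $A$ is cyclic, $A_\mu$ is characteristic in $A$, hence normal in $G$, so $A_\mu B_\mu$ is a subgroup, and the order count $|A_\mu B_\mu|=|A_\mu|\,|B_\mu|/|A_\mu\cap B_\mu|=m_\mu\,|B|_\mu/|A\cap B|_\mu=|G|_\mu$ (using $A_\mu\cap B_\mu=(A\cap B)_\mu$) identifies it as a Hall $\mu$-subgroup.

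The heart of the proof is the determination of $\pi$. I will show that for every prime $p$ the conditions (i) $p\in\pi$, (ii) $B_{p'}$ centralizes $A_p$, and (iii) $o_{m_p}(t)$ is a power of $p$, are equivalent. The equivalence (ii)$\Leftrightarrow$(iii) is immediate, since $B_{p'}=\GEN{b^{|B|_p}}$ acts on $A_p$ through $[t^{|B|_p}]_{m_p}$, which is trivial exactly when $o_{m_p}(t)\mid|B|_p$. For (ii)$\Rightarrow$(i) I verify directly that $N=A_{p'}B_{p'}$ is normal: conjugation by $b$ fixes it, while conjugation by $a$ sends the generator $c=b^{|B|_p}$ to $c\,a^{1-u}$ with $u=t^{|B|_p}$, whose $A$-part lies in $A_{p'}$ precisely under (ii); by (1) this $N$ is a Hall $p'$-subgroup. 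For (i)$\Rightarrow$(ii) I use that a normal Hall $p'$-subgroup $N$ must contain $A_{p'}$ (because $A_{p'}N/N$ is at once a $p$- and a $p'$-group), so $NA=N\times A_p$ with $A_p$ central in $NA$; as $c=b^{|B|_p}\in NA$ (its image in $G/A$ has $p'$-order), $c$ centralizes $A_p$, which is (ii). With this equivalence, \Cref{PropEse} and \Cref{ParametersTDelta} pin $\pi$ down arithmetically: every $p\nmid m$ lies in $\pi$ (take $N$ the preimage of the Hall $p'$-subgroup of the cyclic group $G/A$); $2\in\pi$ whenever $2\mid m$, because $\U_{2^k}$ is a $2$-group so $o_{m_2}(t)$ is automatically a $2$-power; and for odd $p\mid m$, $o_{m_p}(t)$ is a $p$-power iff $p\mid t-1$ iff $p\mid r$. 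Since also $2\mid r_2$ whenever $2\mid m$, this yields $\pi'=\pi(m)\setminus\pi(r)$, the first assertion of (4).

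The remaining parts are assembled from a few standard computations, which I record once: $G'=[A,B]=\GEN{a^{\gcd(m,t-1)}}$, $Z(G)\cap A=\GEN{a^{m/\gcd(m,t-1)}}$, and $A\cap B\le Z(G)\cap A$ (as $A\cap B$ commutes with both $A$ and $B$). For (2): the order-$p$ subgroup of the cyclic group $A$ is non-central iff $p\mid m$ and $p\nmid t-1$ (for $p=2$ it is always central, $t$ being odd), which is exactly the condition $p\in\pi'$ found above; and when $p\in\pi'$ one has $p\mid|G'|$, so this subgroup already lies in $G'\le A$, giving the equivalence of all three statements in (2). For (3): for $p\in\pi'$ (necessarily odd) a valuation count gives $|G'_p|=m_p$, hence $G'_p=A_p$ and so $G'_{\pi'}=A_{\pi'}$; moreover $A\cap B\le Z(G)\cap A=\GEN{a^{m/\gcd(m,t-1)}}$ forces $p\nmid|A\cap B|$ for every $p\in\pi'$, i.e. $A_{\pi'}\cap B_{\pi'}=1$. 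The latter is the same as $s_{\pi'}=m_{\pi'}$ since $s=[G:B]=m/|A\cap B|$, which completes (4) except for $o=o_G$; for this I identify $\Inn_G(G'_{\pi'})=\Inn_G(A_{\pi'})=\Res_{m_\nu}(T)$ with $\nu=\pi'$, of order $o_{m_\nu}(t)$, and note that its $\pi$-part equals its $\nu'$-part because $\pi(|G|)=\pi\sqcup\nu$; by \Cref{ParametersTDelta} that $\nu'$-part is exactly $o$, so $o_G=o$.

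Finally, for (5) I take $H=A_\pi B$ and show it complements the normal subgroup $A_{\pi'}$: clearly $A_{\pi'}H=G$, while $A_{\pi'}\cap H=1$ reduces, by a short computation inside the cyclic group $A$, to $A_{\pi'}\cap B_{\pi'}=1$ from (3). Writing $H=B_{\pi'}\cdot\prod_{p\in\pi}A_pB_p$, the listed factors pairwise commute: $A$ and $B$ are abelian, and for each $p\in\pi$ the subgroup $A_p$ is centralized by $B_{p'}$, hence by $B_{\pi'}$ and by every $B_q$ with $q\in\pi\setminus\{p\}$ --- this centralization is condition (ii) for $p\in\pi$ and is the ``in particular'' clause $[B_{p'},A_p]=1$. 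An order count then shows the product is direct, giving $G=A_{\pi'}\rtimes\bigl(B_{\pi'}\times\prod_{p\in\pi}A_pB_p\bigr)$. I expect the main obstacle to be the equivalence (i)$\Leftrightarrow$(ii) describing $\pi$, and especially (i)$\Rightarrow$(ii): it is the one genuinely group-theoretic step, resting on the uniqueness of a normal Hall $p'$-subgroup and its containment of $A_{p'}$; once it is in place, everything else is commutator and order bookkeeping steered by \Cref{PropEse} and \Cref{ParametersTDelta}.
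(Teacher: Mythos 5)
Your proof is correct, and it reaches the lemma by a genuinely different route than the paper. The paper's pivot is part (2) itself: for $p\mid m$ it studies the unique subgroup $C$ of order $p$ in $A$, showing that $p\in\pi$ forces $C\le Z(G)$ by splitting $G=N\rtimes P$ off a Sylow $p$-subgroup $P\supseteq C$ (so that $[P,C]=1$ and $[N,C]\le N\cap C=1$), and conversely that $C\le Z(G)$ gives $[A_p,A_{p'}B_{p'}]=1$ because the kernel of the restriction $\Aut(A_p)\to\Aut(C)$ is a $p$-group; the same Aut-kernel argument is reused to get $[B_{p'},A_p]=1$ in part (5), and (4) is then read off from (2) and (3). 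You instead take as pivot the equivalence of $p\in\pi$ with $[B_{p'},A_p]=1$ and with the arithmetic condition that $o_{m_p}(t)$ is a $p$-power: your direction starting from a normal Hall $p'$-subgroup $N$ uses only that $N$ and $A_p$ are normal with trivial intersection (hence commute elementwise) together with cyclicity of $G/A$, avoiding the Sylow complement; your converse is a bare-hands normality check on $A_{p'}B_{p'}$ via $c^a=ca^{1-u}$; and you read (2) and (3) off the closed formulas $G'=\GEN{a^{\gcd(m,t-1)}}$ and $Z(G)\cap A=\GEN{a^{m/\gcd(m,t-1)}}$ rather than proving them structurally, while in (5) you exhibit the complement $A_\pi B$ directly instead of invoking uniqueness of the Hall $\pi'$-subgroup. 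What the paper's route buys is economy within the paper, since the $C$-centrality and Aut-kernel facts do double duty across parts (2) and (5); what yours buys is elementarity and transparency, making $\pi'=\pi(m)\setminus\pi(r)$ a visible statement about $t$ rather than a corollary of (2) and (3). The two places you compress --- the observation that $o_{m_p}(t)$ divides $|B|$ (needed for your (ii)$\Leftrightarrow$(iii)) and the coprimality argument making $B_{\pi'}\times\prod_{p\in\pi}A_pB_p$ a direct product --- are routine and correct as sketched.
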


\begin{proof}
\eqref{BasicHall} As $A$ is normal in $G$, $A_\mu B_\mu$ is a $\mu$-subgroup of $G$ and $A_{\mu'}B_{\mu'}$ is a $\mu'$-subgroup of $G$. Moreover $G=AB=A_\mu B_\mu A_{\mu'} B_{\mu'}$ and hence $[G:A_\mu B_\mu]=|A_{\mu'}B_{\mu'}|$. Thus $A_\mu B_\mu$ is a Hall $\mu$-subgroup of $G$.

\eqref{Basicpi} As $G/A$ is abelian, $G'\subseteq A$. Let $p\in \pi(|G|)$.
If $p\nmid m$ then $AB_{p'}$ is a normal Hall $p'$-subgroup of $G$ and hence $p\in \pi$.
Suppose otherwise that $p\mid m$ and let $C$ be the unique subgroup of order $p$ in $A$.
Since $C$ is normal in $G$, it follows that $G'\setminus Z(G)$ has an element of order $p$ if and only if $A\setminus Z(G)$ has an element of order $p$ if and only if $C\not\subseteq Z(G)$. Since $\Aut(C)$ is cyclic of order $p-1$, if $p\in \pi$ and $N$ is a normal Hall $p'$-subgroup of $G$ then $G=N\rtimes P$ with $P$ a Sylow $p$-subgroup of $G$ containing $C$ and as $[P,C]=1$ it follows that $[G,C]\subseteq [N,C]\subseteq N\cap C=1$ and hence $C\subseteq Z(G)$. Conversely, if $C\subseteq Z(G)$ then $[A_p,A_{p'}B_{p'}]=1$ because the kernel of the restriction homomorphism $\Aut(A_p)\rightarrow \Aut(C)$ is a $p$-group. As $A_{p'}B$ normalizes $A_{p'}B_{p'}$ it follows that the latter is a normal Hall $p'$-subgroup of $G$ and hence $p\in \pi$.

\eqref{BasicG'} Let $p\in \pi'$, $c$ an element of order $p$ in $A$ and $a$ a generator of $A$.
Since $|\Aut(\GEN{c})|=p-1$ and $c\not\in Z(G)$,  we have that $a_p^b=a_p^k$ for some integer $k$ such that $\gcd(k,p)=1$. Moreover, $k-1$ is coprime with $p$ because $1\ne [c,b]=c^{k-1}$.
Then $A_p=\GEN{a_p^{k-1}}\subseteq G'$ and hence $A_p={G'}_p$.
Moreover, if $g\in A_p\cap B_p\setminus \{1\}$ then $[g,B]=1$ and $c\in \GEN{g}$, yielding a contradiction. Thus $A_p\cap B_p=1$.
Since this is true for each $p\in \pi'$, we have $A_{\pi'}={G'}_{\pi'}$ and $A_{\pi'}\cap B_{\pi'}=1$.

\eqref{BasicResume} is a direct consequence of \eqref{Basicpi} and \eqref{BasicG'}.

\eqref{BasicSemi} By \eqref{BasicHall} and \eqref{BasicG'}, $A_{\pi'}B_{\pi'}=A_{\pi'}\rtimes B_{\pi'}$ is the unique Hall $\pi'$-subgroup of $G$ and hence $G=(A_{\pi'}\rtimes B'_{\pi'}) \rtimes (A_\pi B_\pi)$.
Moreover, if $p\in \pi$ and $c$ is an element of order $p$ in $A_p$ then $c\in Z(G)$ by \eqref{Basicpi}. This implies that $[B_{p'},A_p]=1$ because the kernel of $\Res_p:\Aut(A_p)\rightarrow \Aut(\GEN{c})$ is a $p$-group. Then $[B_{\pi'},A_{\pi}B_{\pi}]=1$ and $A_{\pi}B_{\pi}=\prod_{p\in \pi} A_pB_p$.
\end{proof}

Next lemma shows that $\epsilon_G$ is determined by any minimal kernel of $G$.

\begin{lemma}\label{epsilonWD}
If $A$ is a minimal kernel of $G$ then $\epsilon_G=\epsilon_G(A)$.
\end{lemma}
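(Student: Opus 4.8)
The plan is to treat $\epsilon_G(A)$ as a purely $2$-local quantity and then pin it down by an intrinsic feature of $G$. The first step is to observe that, by definition, $\epsilon_G(A)=-1$ exactly when $\Res_{r_2}(T_G(A))\ne 1$, and that by \Cref{ParametersTDelta}.\eqref{DetAreo} both $r_2=\gcd(m_2,t-\epsilon)$ and the subgroup $\Res_{r_2}(T_G(A))$ are determined by $\Res_{m_2}(T_G(A))$ alone. Since $\Res_{m_2}(T_G(A))=\sigma_{A_2}^{-1}(\Inn_G(A_2))$, the sign $\epsilon_G(A)$ depends only on the cyclic $2$-group $A_2$ together with the conjugation action $\Inn_G(A_2)\le \Aut(A_2)$. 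Because all minimal kernels share a common order, they share a common value of $|A_2|$; so it remains to control how $\Inn_G(A_2)$ varies with the minimal kernel.

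I would then split according to whether $2\in\pi'$ or $2\in\pi$. If $2\in\pi'$, then \Cref{Basic}.\eqref{BasicG'} gives $A_{\pi'}={G'}_{\pi'}$ for \emph{every} kernel $A$, so in particular $A_2={G'}_2$ is the same subgroup for all kernels; hence $\Inn_G(A_2)$, and therefore $\epsilon_G(A)$, are independent of the choice of $A$ (not merely of the minimal choice), and the claim is immediate in this case.

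The substantive case is $2\in\pi$. Let $N$ be the normal Hall $2'$-subgroup of $G$ and pick a Sylow $2$-subgroup $P$. Since $A_2$ is characteristic in the normal subgroup $A$ it is normal in $G$, hence contained in $P$, and as $\Aut(A_2)$ is a $2$-group while $|N|$ is odd, $N$ centralizes $A_2$. Thus $\Inn_G(A_2)=\Inn_P(A_2)$, and by \Cref{Basic}.\eqref{BasicHall} we have $P=A_2B_2$, a metacyclic factorization of $P$ with kernel $A_2$; consequently $\epsilon_G(A)=\epsilon_P(A_2)$. Using the Hall decomposition of \Cref{Basic}.\eqref{BasicSemi} I would check that the $2$-part of a minimal kernel of $G$ is again a minimal kernel of $P$, so that, arguing by induction on $|G|$ when $P\subsetneq G$, everything reduces to the case in which $G=P$ is itself a metacyclic $2$-group.

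The core of the argument, and the main obstacle, is therefore this $2$-group case. Writing $A=\GEN{a}_{}$ with $|A|=2^k$ and $a^b=a^t$, the structure $\U_{2^k}=\GEN{-1}_{2^k}\times\GEN{5}_{2^k}$ from \Cref{PropEse} shows that $\epsilon_P(A)=-1$ is equivalent, once $k\ge 2$, to $t\equiv -1\pmod 4$, i.e.\ to the complement acting by inversion modulo $4$. The task is to prove that this ``inversion modulo $4$'' feature cannot depend on the choice of a minimal kernel: two cyclic normal subgroups of the common minimal order, each with cyclic quotient, must induce actions congruent modulo $4$. I expect the generic argument to proceed by exhibiting inside a minimal kernel a characteristic cyclic subgroup on which the $G$-action is forced, so that the residue of $t$ modulo $4$ is intrinsically determined; the delicate points will be the irregular behaviour of $\U_{2^k}$ at small exponents and the low-order configurations ($|A_2|\le 4$, and the order-$8$ groups $D_8$ and $Q_8$), which I would dispatch by direct inspection.
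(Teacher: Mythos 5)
Your preparatory reductions are individually sound---$\epsilon_G(A)$ does depend only on the pair $(A_2,\Inn_G(A_2))$, the case $2\in\pi'$ does follow from $A_2={G'}_2$ via \Cref{Basic}.\eqref{BasicG'}, and when $2\in\pi$ one does get $\Inn_G(A_2)=\Inn_P(A_2)$ for the Sylow $2$-subgroup $P=A_2B_2$---but your proof stops exactly where the content of the lemma begins. For metacyclic $2$-groups you never prove that two minimal kernels induce actions agreeing modulo $4$; you only state the task and say you ``expect'' an argument exhibiting a characteristic cyclic subgroup, with small configurations dispatched by inspection. That unproved assertion \emph{is} the lemma in the critical case, so the proposal is not a proof. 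Compare with the paper, which settles everything in one global computation valid for \emph{any} kernel $A=\GEN{a}$ of order $m$: since ${G'}_2=\GEN{a_2^{\gcd(m_2,t-1)}}$ where $a^b=a^t$, and since $4\mid r_2$ whenever $4\mid m_2$, one gets $|{G'}_2|=\frac{m_2}{r_2}\le\frac{m_2}{4}$ if $\epsilon_G(A)=1$ and $|{G'}_2|=\frac{m_2}{2}$ if $\epsilon_G(A)=-1$; hence $\epsilon_G(A)=-1$ if and only if $m_2=2|{G'}_2|>2$, a condition on the invariants $m_G$ and $|{G'}_2|$ alone. No passage to Sylow subgroups, induction, or case inspection is needed, and your step-4 task becomes a one-liner.

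Moreover, the one bridge you do assert in the case $2\in\pi$ is false: the $2$-part of a minimal kernel of $G$ need \emph{not} be a minimal kernel of $P$. Take $G=\GEN{a}\rtimes(\GEN{b}\times\GEN{c})$ with $|a|=3$, $|b|=4$, $|c|=2$, $a^b=a$ and $a^c=a^{-1}$. Here $2\in\pi$ (the normal Hall $2'$-subgroup is $\GEN{a}$), and every kernel contains $G'=\GEN{a}$; among the cyclic subgroups of $G$ containing $\GEN{a}$, namely $\GEN{a}$, $\GEN{a}\GEN{b^2}$ and $\GEN{a}\GEN{b}$, only $A=\GEN{a}\times\GEN{b}\cong C_{12}$ has cyclic quotient, so $A$ is the unique (hence minimal) kernel of $G$, with $A_2=\GEN{b}$ of order $4$. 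But $P=\GEN{b}\times\GEN{c}$ has the kernel $\GEN{c}$ of order $2$ (with $P/\GEN{c}\cong C_4$), so $A_2$ is not a minimal kernel of $P$, and the induction you describe cannot be run as stated. What is true, and what you would actually need, is that the $2$-parts of minimal kernels of $G$ are kernels of $P$ all of the same order $(m_G)_2$; the $2$-group statement to be proved is therefore that kernels of \emph{equal order} have equal $\epsilon$---which is again precisely what the ${G'}_2$ computation above delivers.
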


	\begin{proof}
	Let $m=m_G=|A|$, $\epsilon=\epsilon_G(A)$ and $r=r_G(A)$.
	If $m_2\le 2$ then $\epsilon=1=\epsilon_G$. Otherwise $4\mid r_2$ and 
	$${G'}_2=\begin{cases} \GEN{a^{r_2}}, & \text{ if } \epsilon=1; \\
	\GEN{a^2}, & \text{ if } \epsilon=-1. \end{cases}$$
	Then
	$$|{G'}_2|=\begin{cases} \frac{m_2}{r_2}, & \text{ if } \epsilon=1; \\
	\frac{m_2}{2}, & \text{ if } \epsilon=-1; \end{cases}$$
	 and hence $\epsilon=-1$ if and only if $m_2=2|{G'}_2|>2$ if and only if $\epsilon_G=-1$.
	\end{proof}


Let
	$$R_G=\{r_G(A) : A \text{ is a minimal kernel of } G\}.$$
Next lemma shows that $|R_G|\le 2$ and in most cases $|R_G|=1$.

\begin{lemma}\label{Differentr}
Let $m=m_G$, $n=n_G$ and $o=o_G$.
Then the following statements are equivalent:
\begin{enumerate}
	\item\label{R>1} $|R_G|>1$.
	\item\label{RCond} $n_2\ge 4$, $m_2\ge 8$, $\epsilon_G=-1$, $o_2<n_2$ and $R_G=\{\frac{r}{2},r\}$ for some $r$ with $r_2=m_2$.
	\item\label{RCond2Some} $n_2\ge 4$, $m_2\ge 8$, $\epsilon_G=-1$, $o_2<n_2$, $r_2\in \{\frac{m_2}{2},m_2\}$ for some $r\in R_G$ and $[G:B]_2=\frac{m_2}{2}$ for some metacyclic factorization $G=AB$ with $m=|A|$.
	\item\label{RCond2All} $n_2\ge 4$, $m_2\ge 8$, $\epsilon_G=-1$, $o_2<n_2$, $r_2\in \{\frac{m_2}{2},m_2\}$ for some $r\in R_G$ and $[G:B]_2=\frac{m_2}{2}$ for every metacyclic factorization $G=AB$ with $m=|A|$.
\end{enumerate}

Furthermore, suppose that $G=AB$ is a metacyclic factorization satisfying the conditions of \eqref{RCond2Some} and let $a$ be a generator of $A$ and $b$ be a generator of $B$ and $s=[G:B]$. Let $C=\GEN{b^{\frac{nm_{2'}}{2s_{2'}}}a}$.
Then $G=CB$ is another metacyclic factorization with $|C|=m$ and $r_G(C)\ne r_G(A)$. 
\end{lemma}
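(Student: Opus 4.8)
The plan is to reduce everything to the prime $2$ and then analyse the case $\epsilon_G=-1$ by hand. Writing $T_G(A)=\GEN{t}_m$ for a kernel $A=\GEN{a}$ with $a^b=a^t$, the commutator relation $[a,b]=a^{t-1}$ gives $G'=\GEN{a^{t-1}}$, so $|G'_p|=m_p/\gcd(m_p,t-1)$ for every prime $p$. For odd $p$ we have $r_G(A)_p=\gcd(m_p,t-1)$ by \Cref{ParametersTDelta}, hence $r_G(A)_p=m_p/|G'_p|$; since every minimal kernel has order $m=m_G$ and $G'$ is intrinsic to $G$, the value $r_G(A)_p$ is the same for all minimal kernels. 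The identical formula applies to $p=2$ when $\epsilon_G=1$. By \Cref{epsilonWD} the sign $\epsilon_G$ does not depend on the minimal kernel, so two minimal kernels can have different $r$-values only through the $2$-part and only when $\epsilon_G=-1$; there $r_G(A)_2=\gcd(m_2,t+1)$ is decoupled from $|G'_2|=m_2/2$. This already concentrates the whole problem at the prime $2$ with $\epsilon_G=-1$ and shows $r_G(A)_{2'}$ is constant across $R_G$.

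For the equivalence (1)$\Leftrightarrow$(2) I would pass to a Sylow $2$-subgroup $P=A_2B_2$, which by \Cref{Basic} carries all the $2$-local data, and parametrise the minimal kernels by their conjugation exponent $t\equiv -1\bmod 4$, so that $r_G(A)_2=2^{\min(v_2(t+1),\,v_2(m))}$. Using \Cref{PropEse} (the statements for $R\equiv -1\bmod 4$, which control $v_2(t^j\pm 1)$ and $v_2(\Ese{t}{j})$) together with the membership $T\subseteq \U_m^{n,s}$ and the minimality of $|A|$, I would show that for a minimal kernel $v_2(t+1)$ can only equal $v_2(m)-1$ or be $\ge v_2(m)$, so $r_G(A)_2\in\{m_2/2,\,m_2\}$; this gives $|R_G|\le 2$ and, when $|R_G|>1$, forces $R_G=\{r/2,r\}$ with $r_2=m_2$. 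The same computation isolates exactly when both values are realised by minimal kernels, namely $n_2\ge 4$, $m_2\ge 8$ and $o_2<n_2$, proving (1)$\Rightarrow$(2); the converse is immediate. Feeding $m_2\in\{s_2,2s_2\}$ from \Cref{ParametersTDelta}.\eqref{mDividernrs} into this analysis, I would further verify that under these hypotheses every factorization $G=AB$ with $|A|=m$ has $[G:B]_2=m_2/2$, ruling out $s_2=m_2$; this is (1)$\Rightarrow$(4), after which (4)$\Rightarrow$(3) and (2)$\Rightarrow$(1) are trivial.

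It remains to prove (3)$\Rightarrow$(1) via the explicit construction, which closes the cycle of implications. Set $c=b^{k}a$ with $k=\frac{n m_{2'}}{2 s_{2'}}$ and $u=t^{k}$. Applying \eqref{Potencia} with $h=b^k$, $g=a$, and noting $g^h=a^{u}=g^u$, gives $c^{j}=b^{kj}a^{\Ese{u}{j}}$. From this I would read off $|c|=m$ (the least $j$ with $b^{kj}\in A$ and the $a$-exponents cancelling), check that $C=\GEN{c}$ is normal and, since $a=b^{-k}c$, that $G=CB$ with $G/C$ cyclic; thus $C$ is again a minimal kernel. Finally I would determine the exponent $t'$ with which $b$ acts on a generator of $C$ and show, again via \Cref{PropEse}, that $v_2(t'+1)$ is obtained from $v_2(t+1)$ by interchanging the two admissible values $v_2(m)-1$ and $\ge v_2(m)$; hence $r_G(C)_2$ is the companion of $r_G(A)_2$ in $\{m_2/2,m_2\}$ while the odd parts agree, so $r_G(C)\ne r_G(A)$ and $|R_G|>1$.

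The main obstacle is the $2$-adic bookkeeping at two points. First, in (1)$\Leftrightarrow$(2) one must prove that minimality of the kernel forbids $v_2(t+1)<v_2(m)-1$ and simultaneously pin down the exact thresholds $n_2\ge 4$, $m_2\ge 8$, $o_2<n_2$ under which both values $m_2/2$ and $m_2$ are attained; this is where the interaction between $o_G$, the index $s$, and the constraint $T\subseteq \U_m^{n,s}$ is delicate. Second, in the construction one must verify rigorously that $C$ is a genuine kernel of order exactly $m$ (normality and cyclicity of $G/C$, not merely $|c|=m$) and must evaluate $v_2(t'+1)$ precisely; the careful, repeated use of \Cref{PropEse} for $R\equiv -1\bmod 4$ to compute $v_2(\Ese{u}{j})$ and the new conjugation exponent is the technical heart of the argument.
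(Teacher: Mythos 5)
Your reduction to the prime $2$ and $\epsilon_G=-1$ (via $G'=\GEN{a^{t-1}}$ and $r_G(A)_p=m_p/|G'_p|$ for $\epsilon^{p-1}=1$) matches the paper and is fine, and your use of the explicit element $b^{\frac{nm_{2'}}{2s_{2'}}}a$ to close the cycle of implications is the right idea. But the core of your argument for (1)$\Leftrightarrow$(2) rests on a false claim: that minimality of the kernel forces $v_2(t+1)\in\{v_2(m)-1\}\cup[v_2(m),\infty)$, i.e.\ $r_G(A)_2\in\{\frac{m_2}{2},m_2\}$ for \emph{every} minimal kernel. This is not true. Take $G=\G_{16,4,16,3}$ (order $64$): here $G'=\GEN{a^2}$ has order $8$, $G/G'\cong C_2\times C_4$ is not cyclic, so every kernel has order at least $16$ and $\GEN{a}$ is a minimal kernel; yet $t=3$ gives $\epsilon_G=-1$ and $r_2=\gcd(16,t+1)=4<8=\frac{m_2}{2}$, so $v_2(t+1)=2<v_2(m)-1=3$. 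No amount of $2$-adic bookkeeping with \Cref{PropEse} will prove your claim, because it fails.

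What the lemma actually requires is a different, weaker statement, and the paper proves it by an invariance argument that your proposal lacks entirely: when $\epsilon_G=-1$ and $m_2\geq 8$, every minimal kernel $A$ satisfies ${G'}_2=A_2^2$, hence $\Res_{\frac{m_2}{2}}(T_G(A))=\sigma_{{G'}_2}^{-1}(\Inn_G({G'}_2))$ is an invariant of $G$, independent of the choice of $A$. Consequently, if one minimal kernel has $r_2<\frac{m_2}{2}$ (as in the example above), this restriction pins down $r_2$ for all minimal kernels and $|R_G|=1$; only when $r_2\in\{\frac{m_2}{2},m_2\}$ can the restriction fail to separate the two candidate values, which is why $R_G\subseteq\{\frac{r}{2},r\}$ with $r_2=m_2$. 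The same gap affects your treatment of the thresholds: the paper rules out $o_2=n_2$ by observing that then $C_G({G'}_{\pi'})_2=A_2$, so $\Res_{m_2}(T_G(A))$ itself is an invariant of $G$, and rules out $n_2=2$ by comparing Sylow $2$-subgroups of two hypothetical factorizations (dihedral versus quaternion). These are comparisons \emph{between different minimal kernels}, which cannot be extracted from valuation computations on a single kernel with $T\subseteq\U_m^{n,s}$; your sketch offers no substitute for them, so the equivalence (1)$\Leftrightarrow$(2), and with it the whole cycle, is not established.
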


\begin{proof}
Let $\epsilon=\epsilon_G$, $o=o_G$, $R=R_G$ and for every $p\in \pi$ let $R_p=\{r_p : r\in R\}$. 
Fix a minimal kernel $A$ of $G$ and let $r=r_G(A)$.

Let $p\in \pi$. 
If $\epsilon^{p-1}=1$ then $|{G'}_p|=\frac{m_p}{r_p}$.
Thus in this case $|R_p|=1$. 
Therefore $r_{2'}$ is constant for every $r\in R$ and hence $|R|=|R_2|$. 
Moreover, if $\epsilon=1$ then $G'_2=\frac{m_2}{r_2}$ and hence $R_2=\{\frac{m_2}{|{G'}_2|}\}$. In this case none of the conditions \eqref{R>1}-\eqref{RCond2All} hold.
Otherwise, $4\mid r_G(A)_2 \mid m_2$.
Thus, if $m_2<8$ then $r_G(A)_2=4$ for every minimal kernel $A$ of $G$ and hence $|R|=|R_2|=1$, so that again none of the conditions \eqref{R>1}-\eqref{RCond2All} hold.
Thus in the remainder of the proof we assume that $\epsilon=-1$ and $8\le m_2$. 
Then ${G'}_2=A^2$ and hence $\GEN{-1+r_G(A)_2}_{\frac{m_2}{2}} = \Res_{\frac{m_2}{2}}(\T_G(A))=\sigma_{{G'}_2}^{-1}(\Inn_G({G'}_2))$, which is independent of $A$.
This shows that if $R_2$ contains an element smaller than $\frac{m_2}{2}$ then it only has one element and hence again none of the conditions \eqref{R>1}-\eqref{RCond2All} hold.
So in the remainder of the proof we assume that $R_2 \subseteq \{\frac{m_2}{2},m_2\}$.

Suppose that $o_2=n_2$. Then, by \Cref{Basic}.\eqref{BasicResume}, $C_G({G'}_{\pi'})_2=A_2$, and hence $\GEN{-1+r_G(A)_2}_{m_2} = \Res_{m_2}(T_G(C_G({G'}_{\pi'})_2))$ is independent of $A$.
Therefore, in this case $|R_2|=1$, so that $|R|=1$.
So again in this case none of the conditions \eqref{R>1}-\eqref{RCond2All} hold and in the remainder of the proof we also assume that $o_2<n_2$.

Suppose that $n_2<4$. Then none of the condition \eqref{RCond}-\eqref{RCond2All} holds and as $\epsilon=-1$, we have $n_2=2$.
By means of contradiction suppose that \eqref{R>1} holds.
By the previous paragraph $R_2=\{\frac{m_2}{2},m_2\}$ and hence $G$ has two minimal kernels $A$ and $C$ with $r_G(A)_2=m_2$ and $r_G(C)_2=\frac{m_2}{2}$.
If $G=AB$ and $G=CD$ are metacyclic factorization of $G$ then $A_2B_2$ and $C_2D_2$ are Sylow $2$-subgroups of $G$ and hence they are isomorphic.
However, by \Cref{ParametersTDelta}.\eqref{mDividernrs}, $[A_2B_2:B_2]$ is either $m_2$ or $\frac{m_2}{2}$. In the first case $A_2B_2$ is dihedral and in the second case $A_2B_2$ is quaternionic. This yields a contradiction because from $r_G(C)_2=\frac{m_2}{2}$ it follows that $C_2D_2$ is neither dihedral nor quaternionic.

Thus in the remainder we assume that $m_2\ge 8$, $n_2\ge 4$, $o_2<n_2$, $\epsilon=-1$ and $R_2\subseteq \{\frac{m_2}{2},m_2\}$. Moreover, by the above arguments we have that $R\subseteq \{\frac{r}{2},r\}$ for some $r$ with $r_2=m_2$. Thus \eqref{R>1} and \eqref{RCond} are equivalent.

\eqref{RCond2All} implies \eqref{RCond2Some} is clear.

\eqref{RCond2Some} implies \eqref{RCond}.
Let $G=AB$ be a metacyclic factorization of $G$ satisfying the conditions of \eqref{RCond2Some}. Let $s=[G:B]$ and $r=r_G(A)$. 
Select generators $a$ of $A$ and $b$ of $B$ and let $z=b^{\frac{nm_{2'}}{2s_{2'}}}$, $c=za$ and $C=\GEN{c}$.
We will prove that if $G=CB$ is another metacyclic factorization with $|C|=m$ and $r_G(C)\ne r$, so that \eqref{RCond} holds.
Indeed, since $o_2<n_2$, we have $[z,a_{\pi'}]=1$. Moreover, $[z_{p'},a_p]=1$ for every $p\in \pi$. If moreover, $p\ne 2$ then $[z_p,a_p]=1$ because $[b^n,a]=1$. Finally, $r_2\in \{\frac{m_2}{2},m_2\}$ and hence $o_{m_2}(-1+r_2)=2$. As $4\mid n$ and $a_2^{b_2}=a_2^{-1+r_2}$ it follows that $[z_2,a_2]=1$.
This shows that $z\in Z(G)$. 
As $s=[G:B]$ and $[G:A]=n$ we have $b^n=a^{sx}$ for some integer $x$ coprime with $m$. 
Then $c^2 = a^{2+sx\frac{m_{2'}}{s_{2'}}} = a^{2+xs_2m_{2'}}=a^{2+x\frac{m}{2}}=a^{2+\frac{m}{2}}$. 
As $8\mid m$ it follows that $|C|=m$. Suppose that $a^b=a^t$. Then $t+1\equiv r_2 \mod m_2$. Let $r'\in \N$ with $r'_{2'}=r_{2'}$ and $\{r_2,r'_2\}=\{\frac{m_2}{2},m_2\}$ and let $t'$ be an integer such that $t'\equiv t \mod m_{2'}$ and $t'\equiv -1+r'_2 \mod m_2$. 
As $8\mid m$ we have $t'\equiv t \equiv -1 \mod 4$ and hence $t'=1+2y$ for some odd integer $y$. 
Then $c^{t'}=zz^{t'-1}a^{t'}=zz^{2y}a^{t'}=za^{t'+y\frac{m}{2}}$. 
Moreover, $t'+y\frac{m}{2} \equiv t' \equiv t \mod m_{2'}$ and $t'+y\frac{m}{2}\equiv -1+r'_2+\frac{m_2}{2} \equiv -1+r_2 \equiv t \mod m_2$. Therefore $c^{t'}=za^t=c^b$. This shows that $C$ is a cyclic normal subgroup of $G$ and clearly $G=CB$ is a metacyclic factorization satisfying the desired condition. 

Before proving \eqref{R>1} implies \eqref{RCond2All} we prove that if $G=AB=CD$ are metacyclic factorizations with $|A|=|B|=m$ then $[G:B]_2=[G:D]_2$.
The assumption $\epsilon=-1$ implies that ${G'}_2=A^2=C^2$.
As $A_2B_2$ and $C_2D_2$ are Sylow $2$-groups of $G$ we may assume that they are equal and hence if $A_2=\GEN{a}$ and $B=\GEN{b}$ we may write $c=b^ia^j$ and $d=b^ka^l$. Since $c^2\in C^2=A^2$ we have $\frac{n_2}{2}\mid i$ and as $4\mid n$, necessarily $2\mid i$ and hence $2\nmid k$.
Then, using that $r_G(A),r_G(C)\in \{\frac{m_2}{2},m_2\}$ we have that $d^2 = b^{2k}$ or $d^2=b^{2k}a^{l\frac{m_2}{2}}$. In both cases $d^4=b^4$ and hence $D^4=B^4$. As $4\mid n$ it follows that $A_2\cap B_2=B_2^{n_2}=D_2^{n_2}=C_2\cap D_2$.
Therefore, $[G:B]_2=[A_2B_2:B_2]=[A_2,A_2\cap B_2]=[C_2:C_2\cap D_2]=[G,D]_2$, as desired.

\eqref{R>1} implies \eqref{RCond2All}. Suppose that $|R|>1$.
By the assumptions and the previous arguments we know that the only condition from \eqref{RCond2All} which is not clear is that if $G=AB$ is a metacyclic factorization with $m=|A|$ and $s=[G:B]$ then $s_2=\frac{m_2}{2}$. So suppose that $s_2=m_2$. Since $|R|>1$, there is a second metacyclic factorization $G=CD$ with $|C|=m$ and $\{r_G(A)_2,r_G(C)_2\}=\{\frac{m_2}{2},m_2\}$. By the previous paragraph $[G:D]_2=[G:B]_2=1$.
By symmetry we may assume that $r_G(A)_2=m_2$ and $r_G(C)=\frac{m_2}{2}$.
As above we may assume that $A_2B_2=C_2D_2$ and if $A_2=\GEN{a}$, $B_2=\GEN{b}$, $C_2=\GEN{c}$ and $D_2=\GEN{d}$ then $a^b=a^{-1}$, $c^d=c^{-1+\frac{m_2}{2}}$,
${G'}_2=A_2^2=C_2^2$, $A_2\ne C_2$ and $A_2\cap B_2=C_2\cap D_2=1$.
Write $c=b^ia^j$ and $d=b^ka^l$ with $i,j,k,l\in \N$.
Since $c^2\in A$ we have that  $\frac{n_2}{2}\mid i$ and as $4\mid n_2$, we have that $k$ is odd and $[b^i,a]=1$. Thus $b^{2i}=c^2a^{-2j}\in A_2\cap B_2=1$.
Then $c^2=a^{2j}$ and as $C^2=A^2$, necessarily $j$ is odd.
However, from $b^{2i}=1$, $[b^i,a]=1$ and $8\mid m$ we have
$b_2^ia_2^{(-1+\frac{m_2}{2})j}=b_2^{(-1+\frac{m_2}{2})i}a_2^{(-1+\frac{m_2}{2})j}= c_2^{-1+\frac{m_2}{2}} = c_2^{d} = b_2^ia_2^{-j}$ and hence $2\mid j$, a contradiction.
\end{proof}

In our next result we show a way to decide if a factorization of $G$ is minimal and we prove that the following algorithm transforms a metacyclic factorization of $G$ into a minimal one.

\begin{algorithm}\label{AlgoMinMF}
	{\sc Input}: A metacyclic factorization $G=AB$ of a finite group $G$.
	
	{\sc Output}: $a,b\in G$ with $G=\GEN{a}\GEN{b}$ a minimal metacyclic factorization of $G$. 
	
	\begin{enumerate}
		\item $m:=|A|$, $n:=[G:A]$, $s:=[G:B]$, 
		\item\label{AlgoGenerators} $a:=$ some generator of $A$, $b:=$ some generator of $B$, and $y\in \N$ with $b^n=a^y$.
		\item $r:=r_G(A)$, $\epsilon:=\epsilon_G(A)$ and $o=o_G(A)$.
		\item\label{AlgomG} for $p\in \pi(r)$ with $\epsilon^{p-1}=1$ 
		\begin{enumerate}
			\item\label{Algo11}  if $s_p\nmid n$ then $b:=ba_p$ and $s:=s_{p'}n_p$.
			\item\label{Algomp}  if $r_p\nmid s$, $s_po_p\mid n$ and $t\in \N$  satisfy $a_p^{b_p}=a_p^t$, compute $x\in \N$ satisfying  $x\Ese{t^{\frac{n}{s_p}}}{s_p}\equiv r-y\mod m_p$ and set
			$a:=b_p^{\frac{n}{s_p}} a_{p'} a_p^x$, $m:=s_p\frac{m}{r_p}$, $n:=n\frac{r_p}{s_p}$, and $$(r,\epsilon):=\begin{cases} (4r_{2'},-1), &  \text{ if } 8\mid m, s_p=2, \text{ and } r_2=\frac{m_2}{2}; \\(r_{p'}s_p,1), & \text{otherwise}.\end{cases}$$
		\end{enumerate}
		\item\label{AlgorG} If $\epsilon=-1$, $4\mid n$, $8\mid m$, $o_2<n_2$ and $r_2\nmid s$ then $a:=b^{\frac{m_{2'}n}{2s_{2'}}}a$ and $r:=r_{2'}s_2$
		\item If $\epsilon=-1$ and $s_2=r_2n_2$ then $b:=ba_2$ and $s:=\frac{s}{2}$.
		\item  Return $(a,b)$.
	\end{enumerate}
\end{algorithm}

\begin{proposition}\label{MiniMF}
Let $G=AB$ be a metacyclic factorization and let $m=|A|$, $n=[G:A]$, $s=[G:B]$ and $T=T_G(A)$.  
Then $G=AB$ is minimal as metacyclic factorization of $G$ if and only if $T$ is $(n,s)$-canonical.

Furthermore, if the input of \Cref{AlgoMinMF} is a metacyclic factorization of $G$ and its output is $(a,b)$ then $G=\GEN{a}\GEN{b}$ is a minimal metacyclic factorization of $G$.
\end{proposition}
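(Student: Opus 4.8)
The plan is to prove both assertions at once, organizing everything around the lexicographic structure of minimality: a factorization $G=AB$ is minimal precisely when, in this order, $|A|$ is as small as possible, then $r_G(A)$ is as small as possible among minimal kernels, and then $[G:B]$ is as small as possible. The Hall decomposition $G=A_{\pi'}\rtimes(B_{\pi'}\times\prod_{p\in\pi}A_pB_p)$ of \Cref{Basic}.\eqref{BasicSemi} lets each of the three coordinates be analysed one prime at a time, and since $\pi'=\pi(m)\setminus\pi(r)$ by \Cref{Basic}.\eqref{BasicResume}, the primes split into those in $\pi(r)$ and those in $\pi'$. On the $\pi'$-part there is nothing to do, because $A_{\pi'}={G'}_{\pi'}$ is forced (\Cref{Basic}.\eqref{BasicG'}) and $s_{\pi'}=m_{\pi'}$; thus the whole analysis lives over $\pi(r)$, subdivided into the $+$ primes ($\epsilon^{p-1}=1$) and the $-$ case ($p=2$, $\epsilon=-1$). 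Throughout I would use that $o_G(A)=o_G$ and $\epsilon_G(A)=\epsilon_G$ are genuine invariants of $G$ (\Cref{Basic}.\eqref{BasicResume} and \Cref{epsilonWD}), so they do not change when the factorization is modified.

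For the equivalence I would match each canonical condition to one of the three minimizations. For a $+$ prime $p$, the clause ``$r_p\mid s$ or $s_po_p\nmid n$'' of (Can+) is exactly the failure of the kernel-shrinking move: when $r_p\nmid s$ and $s_po_p\mid n$ the element $a:=b_p^{n/s_p}a_{p'}a_p^x$ of \Cref{AlgoMinMF} generates a cyclic normal subgroup of order $s_p\frac{m}{r_p}<m$, and an order computation with the operator $\Ese{-}{-}$ (\Cref{PropEse}) shows these two inequalities are the only obstruction; so this clause says precisely that $A$ has minimal $p$-order. Granting a minimal kernel, the remaining clause ``$s_p\mid n$'' of (Can+) says $[G:B]_p$ cannot be lowered while keeping $A$: if $s_p\nmid n$ then passing from $b$ to $ba_p$ drops $s_p$ to $n_p$ without disturbing $m$ or $r$. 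On the $-$ side \Cref{Differentr} does the work: its classification shows $r_G(A)_2$ can drop (from $m_2$ to $m_2/2$) only when $m_2\ge8$, $n_2\ge4$, $o_2<n_2$ and $[G:B]_2=m_2/2$, which is exactly the clause $r_2\le s_2$ of (Can--); while the clause $s_2\ne r_2n_2$ records the last available $s$-reduction, halving $s_2$ by replacing $b$ with $ba_2$. Assembling these prime-by-prime equivalences gives: $G=AB$ is minimal iff $T_G(A)$ is $(n,s)$-canonical.

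For \Cref{AlgoMinMF} I would verify that every step outputs another metacyclic factorization of the same $G$ and installs exactly the matching canonical condition, and that, by the independence of Hall components in \Cref{Basic}.\eqref{BasicSemi}, an operation at a prime $p$ leaves the components at the other primes (and hence the conditions already secured there) untouched. Thus the conditions accumulate and the final pair $(a,b)$ is $(n,s)$-canonical, whence minimal by the equivalence above. Concretely: the step $b:=ba_p$ forces $s_p\mid n$; the kernel-shrinking step shrinks $A$ at $p$, with the displayed update of $(r,\epsilon)$ recording the invariants of the smaller kernel; the step $a:=b^{m_{2'}n/(2s_{2'})}a$ is verbatim the kernel-switch of the \emph{Furthermore} part of \Cref{Differentr}, so its correctness and the new value $r:=r_{2'}s_2$ are inherited; and the final step $b:=ba_2$ forces $s_2\ne r_2n_2$. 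The order --- $+$ primes, then $r_2$, then $s_2$ --- is what keeps the successive tests compatible; running $b:=ba_p$ before the shrinking step is what makes the test $s_po_p\mid n$ meaningful.

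The hard part is uniformly the prime $p=2$ with $\epsilon=-1$: there the two clauses of (Can--) interact, the dihedral-versus-quaternionic dichotomy underlying \Cref{Differentr} must be tracked, and one must check by hand that the explicit generators above really are cyclic normal subgroups of the stated orders and that the displayed transformation of $(r,\epsilon)$ is correct. These are exactly the $2$-adic computations with $v_2$, $o_{2^k}(R)$ and $\Ese{R}{m}$ for which \Cref{PropEse}.\eqref{vpEspecial}--\eqref{vpEspecial+} are tailored, whereas the odd-prime and $\pi'$ bookkeeping is routine.
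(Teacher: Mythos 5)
Your treatment of the direction ``minimal implies canonical'' and of the correctness of \Cref{AlgoMinMF} matches the paper: each failed canonical condition is witnessed by an explicit improving move (replace $b$ by $ba_p$; shrink the kernel via $b_p^{n/s_p}a_{p'}a_p^x$; switch the kernel as in the last part of \Cref{Differentr}; replace $b$ by $ba_2$), and the Hall decomposition of \Cref{Basic} keeps moves at different primes from interfering. These are exactly the paper's Claims 3 and 4 and its handling of (Can--).

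The gap is in the converse, ``canonical implies minimal''. Minimality is defined by comparison with \emph{all} metacyclic factorizations $G=CD$, but your argument only rules out improvements of two restricted kinds: factorizations obtained from $G=AB$ by the algorithm's own moves (for the third coordinate you explicitly say ``$[G:B]_p$ cannot be lowered \emph{while keeping} $A$''), plus the kernel switch of \Cref{Differentr} for the $r$-coordinate. That does not exclude an arbitrary competitor doing better. Two statements are missing, and they are the bulk of the paper's proof. First (the paper's Claim 1): if (Can+) holds then \emph{every} kernel $C$ of $G$ satisfies $|C_p|\ge m_p$. This is not the same computation as checking that the shrinking move fails: it needs the structural facts that $C\supseteq G'$, that $C_{\pi'}=A_{\pi'}$, and that $C_p\subseteq C_{G_p}(A_{\pi'})_p=A_pB_p^{o_p}$, so that $C_p=\GEN{b_p^xa_p}$ with $o_p\mid x$, followed by an order computation for such an element; your phrase ``these two inequalities are the only obstruction'' asserts exactly this but the sketch offers no argument quantifying over all kernels. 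Second (the paper's Claim 2): if $T$ is canonical then for every factorization $G=CD$ with $|C|=m$ one has $|D|\le|B|$; for $\epsilon=-1$ this requires a genuine case analysis (Sylow conjugacy to arrange $G_p=C_pD_p$, the fact that $|D_2|,|B_2|\in\{n_2,2n_2\}$, the dihedral group of order $8$, and the estimate $v_2\left(\Ese{R}{n_2}\right)\ge v_2(m_2)$), none of which is a verification of one of the algorithm's moves. Without these two statements, your prime-by-prime equivalences only show that canonical factorizations are local optima for the algorithm's moves; since a minimal factorization exists and is canonical by your first direction, you would still need that all canonical factorizations of $G$ share the same triple $(|A|,r_G(A),[G:B])$ --- and that is precisely what Claims 1 and 2 supply.
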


\begin{proof}
Let $(r,\epsilon,o)=\inv{T_G(A)}$. 
By \Cref{Basic}, $\pi'=\pi(m)\setminus \pi(r)$.
Fix $y,t\in \N$ with $b^n=a^y$ and $a^b=a^t$. 
Then $s=\gcd(t,m)$, $\gcd(t,m)=1$, $r_{2'}=\gcd(m_{2'},t-1)$ and $r_2=\gcd(m_2,t-\epsilon)$.
For every prime $p$ let $G_p=A_pB_p$.
\medskip

\noindent\textbf{Claim 1}. If condition (Can+) holds then $A$ is a minimal kernel of $G$.

Suppose that condition (Can+) holds and let $C$ be kernel of $G$. We want to prove that $|C|\ge m$ and for that it is enough to show that $|C_p|\ge m_p$ for every prime $p$. 
This is obvious if $m_p=1$, and it is a consequence of \Cref{Basic}.\eqref{BasicG'}, if $p\in \pi'$.
So we suppose that $p\in \pi$ and $m_p\ne 1$. Hence $p\mid r$.

Suppose first that $\epsilon^{p-1}=-1$.
Then $p=2$ and $A_2^2=G_2'\subseteq C_2$. However $C_2\not\subseteq A_2^2$ because $G_2/A_2^2$ is not cyclic. Therefore $|C_2|\ge 2|A_2^2|=m_2$.

Suppose otherwise that $\epsilon^{p-1}=1$.
Then $G_p'=A_p^{r_p}$ and $|{G'}_p|=\frac{m_p}{r_p}$.
Assume that $r_p\mid s_p$.
Then $G_p/G_p'=(A_p/{G'_p})\times (B_p{G'}_p/{G'}_p)$ and $r_p=|A_p/{G'}_p|\le n_p=[B_p{G'}_p:{G'}_p]$.
As $(G_p/{G'}_p)/(C_p/{G'}_p)\cong G_p/C_p$ is cyclic, necessarily $r_p\mid [C_p:{G'}_p]$ and hence $m_p\mid |C_p|$, as desired.
Assume otherwise that $r_p\nmid s_p$.
By condition (Can+) we have $s_p\mid n_p$ and $s_po_p\nmid n_p$.
In particular $p\mid o_p$.
By \Cref{Basic}.\eqref{BasicG'}, $C_{\pi'}=A_{\pi'}$ and thence $C_p\subseteq C_{G_p}(A_{\pi'})_p=A_pB_p^{o_p}$.
Using again that $G_p/C_p$ is cyclic and $p\mid o_p$, we must have $C_p=\GEN{b_p^xa_p}$ for $x\in \N$ with $o_p\mid x$ and $x\le n$.
Let $R\in \N$ such that $a_p^{b_p^x}=a_p^R$. Then $R$ satisfies the hypothesis of \Cref{PropEse}.\eqref{vpEspecial+} and hence $v_p\left(\Ese{R}{\frac{n}{x_p}}\right)=v_p(n)-v_p(x)\le v_p(n)-v_p(o)<v_p(s)=v_p(yx_{p'})$ and therefore $v_p\left(yx_{p'}+\Ese{R}{\frac{n}{x_p}}\right)=v_p(n)-v_p(x)$.
Then $|C_p|=\frac{n_p}{x_p}|(b_p^xa_p)^{\frac{n_p}{x_p}}| =
\frac{n_p}{x_p} \left| a_p^{yx_{p'}+\Ese{R}{\frac{n_p}{x_p}}} \right|=m_p$.
This finishes the proof of Claim 1.
\medskip

\noindent\textbf{Claim 2}. If $T_G(A)$ is $(n,s)$-canonical then for every metacyclic factorization $G=CD$ with $|C|=m$ one has $r_G(C)\ge r$ and $|D|\le |B|$.

If $r_G(C)<r$ then, by \Cref{Differentr}, $m_2\ge 8$, $n_2\ge 4$, $\epsilon=-1$, $o_2<n_2$, $r_G(C)_2=\frac{m_2}{2}=s_2$ and $r_2=m_2$, in contradiction with the second part of condition (Can--). Thus $r_G(C)\ge r$.

To prove that $|D|\le |B|$ we show that $|D_p|\le |B_p|$ for each prime $p$. 
This is clear if $p\nmid m$ and a consequence of \Cref{Basic}.\eqref{BasicResume} if $p\in \pi'$. 
Otherwise $p\mid r$. 
Since both $G_p$ and $C_pB_p$ are Sylow $p$-subgroups of $G$ we may assume that $G_p=C_pD_p$.

Assume first that $\epsilon^{p-1}=1$. Then by assumption $s_p\mid n_p$. Let $d=b_p^xa_p^y$ be a generator of $D_p$ and let $R\in \N$ such that $a_p^{b_p^x}=a_p^R$. 
The assumption $\epsilon^{p-1}=1$ implies that $R$ satisfies the hypothesis of 
\Cref{PropEse}.\eqref{vpGeneral} and hence $m_p\mid \Ese{R}{m_p\frac{n_p}{s_p}}$ and from \eqref{Potencia} we deduce that $d^{\frac{m_pn_p}{s_p}} = a_p^{y\Ese{(1+r_p)^x}{m_p\frac{n_p}{s_p}}}=1$ and hence $|D_p|\le \frac{m_pn_p}{s_p}=|b_p|$.
Suppose otherwise that $\epsilon^{p-1}=-1$, i.e. $p=2$ and $\epsilon=-1$.
Then $C_2^2={G'}_2=A_2$ and $C_2\cap D_2\subseteq Z(G_2)\cap C_2 = Z(G_2)\cap C_2^2 = Z(G_2)A=A^{\frac{m_2}{2}}$ and hence  $|C_2\cap D_2|\le 2$.
Thus $|D_2|=[D_2:C_2\cap D_2]\;|C_2\cap D_2| = [G_2:C_2]\;|C_2\cap D_2|\in \{n_2,2n_2\}$.
Similarly, $|B_2| \in \{n_2,2n_2\}$.
If $|B_2|=2n_2$ then $|D_2|$ divides $|B_2|$ as desired.
Suppose otherwise that $|B_2|=n_2$.
Then $m_2=s_2$ and hence $m_2$ divides $\frac{r_2n_2}{2}$, by the hypothesis (Can--) and \Cref{ParametersTDelta}.\eqref{mDividernrs}.
If $D_2\subseteq \GEN{a,b_2^2}$ then $C_2=\GEN{b_2a_2^x}$ for some integer $x$ and hence $n_2=2$ because $C_2^2=\GEN{a_2^2}$. Then $D_2\subseteq \GEN{a_2}$ so that $D_2$ is normal in $G_2$ and hence $\GEN{a_2^2}=C_2^2=[D_2,C_2]\subseteq C_2\cap D_2 \subseteq \GEN{a_2^{\frac{m_2}{2}}}\subseteq \GEN{a_2^2}$.
Then $m_2=4$ and $G_2$ is dihedral of order $8$.
Then every metacyclic factorization of $G_2$ is of the form $\GEN{a_2}\GEN{c}$ with $|c|=2$.
Thus $|D_2|=2=|b_2|$, as wanted.
Assume otherwise that $D_2\not\subseteq \GEN{a_2,b_2^2}$. Then $D_2=\GEN{b_2a_2^x}$ for some integer $x$ and let $R\in \N$ such that $a_2^{b_2}=a_2^R$. The hypothesis $\epsilon=-1$ implies that $R$ satisfies the hypothesis of \Cref{PropEse}.\eqref{vpEspecial}. 
Since $m_2$ divides $\frac{r_2n_2}{2}$, we get $v_2(\Ese{R}{n_2})=v_2(r_2)+v_2(n_2)-1\ge v_2(m_2)$ and hence $(b_2a_2^x)^{n_2}=a_2^{x\Ese{-1+r_2}{n_2}}=1$. Then $|D_2|=n_2$, as desired.
This finishes the proof of Claim 2.\medskip

The necessary part in the first statement of the proposition follows from claims 1 and 2. 
\medskip

\noindent\textbf{Claim 3}. If $p\mid r$, $\epsilon^{p-1}=1$ and $s_p\nmid n_p$ then $[G:ba_p]=s_{p'}n_p<s$. 

First of all  $n= |ba_pA|$ and hence $n$ divides $|ba_p|$. 
Using \eqref{Potencia} we have $(ba_p)^n=a_{p'}^ya_p^{y+\Ese{t}{n}}$ and $v_p([G:\GEN{ba_p}])=v_p(\Ese{t}{n})=v_p(n)<v_p(s)=v_p(y)$, by \Cref{PropEse}.\eqref{vpGeneral} and the assumption.
Thus $|ba_p|=n\frac{m}{s_{p'}n_p}$ and hence $[G:ba_p]=s_{p'}n_p$. This finishes the proof of Claim 3.\medskip

By Claim 3, if the first part of (Can+) fails then $G=AB$ is not minimal because $G=A\GEN{ba_p}$ is a factorization with $[G:b]>[G:\GEN{ba_p}]$. 
Moreover, the factorization $G=A\GEN{ba_p}$ satisfies the first part of condition (Can+) and hence after step \eqref{Algo11} of \Cref{AlgoMinMF}, the factorization $G=\GEN{a}\GEN{b}$ satisfies the first part of (Can+) for the prime $p$.

\noindent\textbf{Claim 4}. Suppose that $p\mid r$, $\epsilon^{p-1}=1$, $s_p\mid n$, $r_p\nmid s$ and $s_po_p\mid n$.
Let $R\in \N$ with $a_p^{b_p^{\frac{n}{s_p}}}=a^R$.
Then there is an integer $x$ such that $r-y\equiv x\Ese{R}{s_p} \mod m_p$.
This justify the existence of $x$ in step \eqref{AlgomG} of \Cref{AlgoMinMF}.
Let $c=b_p^{\frac{n}{s_p}}a_{p'}a_p^x$ and $C=\GEN{c}$. Then $G=CB$ is a metacyclic factorization of $G$ with $|C|=m\frac{s_p}{r_p}<|A|$. Moreover, $$(r_G(C),\epsilon_G(C)):=\begin{cases} (4r_{2'},-1), &  \text{ if } 8\mid m, s_p=2, \text{ and } r_2=\frac{m_2}{2}; \\(r_{p'}s_p,1), & \text{otherwise}.\end{cases}$$

The assumption $s_po_p\mid n_p$ implies that $o_p\mid \frac{n}{s_p}$ and hence $[b_p^{\frac{n}{s_p}},a_{\pi'}]=1$. As also $[b_p,a_{\pi\setminus \{p\}}]=1$ we deduce that $[b_p^{\frac{n}{s_p}},a_{p'}]=1$.
On the other hand, since $r_p\nmid s_p$,  $v_p(y)=v_p(s)<v_p(r)$ and therefore $v_p(r-y)=v_p(s)=v_p(\Ese{t}{s_p})$, by \Cref{PropEse}.\eqref{vpGeneral}.
Therefore there is an integer $x$ coprime with $p$ such that $r-y\equiv x\Ese{R}{s_p} \mod m_p$.
Using \eqref{Potencia} we have $c^{s_p}=b_p^{n} a_{p'}^{s_p} a_p^{x\Ese{R}{s_p}}=a_{p'}^{s_p} a_p^{y+x\Ese{R}{s_p}}=a_{p'}^{s_p} a_p^r$.
Then ${G'}_{p'}\subseteq \GEN{a_{p'}}\subseteq C$ and ${G'}_p=\GEN{a_p^r}\subseteq C$. Thus $G'\subseteq C$ and hence $G=CB$ is a metacyclic factorization of $G$ with $|C|=s_p|a_{p'}||a_p^r|=m\frac{s_p}{r_p}<m=|A|$. 
As $C_{p'}=A_{p'}$, we have $r_G(C)_{p'}=r_G(A)_{p'}=r_{\pi'}$. 
If $\epsilon_G(C)^{p-1}=1$ then $\frac{m_p}{r_p}=|G'_p|=\frac{|C_p|}{r_G(C)_p}=\frac{m_ps_p}{r_pr_G(C)_p}$ and hence in this case $r_G(C)=r_{p'}s_p$. 
Otherwise, i.e. if $p=2$ and $\epsilon_G(C)=-1$ then $2|C_2|\le s_2\le |C_2|$ and $4\le r_G(C)_2\le |C_2|=\frac{m_2s_2}{r_2}=2|{G'}_2|=\frac{2m_2}{r_2}$ and hence $s_2=2$, $|C_2|=4=r_G(C)_2$ and $r_2=\frac{m_2}{2}$.  
Conversely, if $s_2=2$ and $r_2=\frac{m_2}{2}$ then $|C_2|=4$ and hence $r_G(C)_2=4$. Moreover, as $G_2$ is not commutative then $\epsilon_G(C)=-1$. 
This finishes the proof of Claim 4.\medskip 

Claim 4 shows that if the first part of (Can+) holds but the second one fails then $G=AB$ is not minimal. It furthermore the parameters associated to the factorization $G=CB$, i.e. $|C|, [G:C], [G:B], r_G(C), \epsilon_G(C), o_G(C)$, satisfy condition (Can+) for the prime $p$ and hence, after step \eqref{Algomp} of \Cref{AlgoMinMF}, the current factorization $G=\GEN{a}\GEN{b}$ satisfies this condition. Moreover, if $\epsilon_G(C)=1$ then $r_p(C)=s_p\le n_p$ and condition (C+) holds for the prime $p$.
Thus when the algorithm finishes the loop in step \eqref{AlgomG}, the metacyclic factorization satisfies condition (Can+) and hence the current value of $\GEN{a}$ is a minimal kernel of $G$ by Claim 1. 

Observe that the modification of $a$ and $b$ in steps \eqref{Algo11} and \eqref{Algomp} for some prime $p$ does not affect the subsequent calculations inside the loop. Indeed, suppose that $p$ and $q$ are two different divisors of $r$ with $\epsilon^{p-1}=\epsilon^{q-1}=1$, and the prime $p$ has been considered before the prime $q$ in step \eqref{AlgomG}. This has affected $a$ and $b$ which have been transformed by first transforming $b$ into $d=ba_p$ and then transforming $a$ into $c=d_pa_{p'}a_p^x=b_pa_{p'}a_p^{1+x}$.
In principal we should recalculate the natural number $y$ computed in step \eqref{AlgoGenerators} to a new $y'$. However, as $p\in \pi$, $[b_{p'},a_p]=[b_{q'},a_p]=1$ and hence $a_{p'}=c_{p'}$ and $b_{p'}=d_{p'}$. Therefore $d_q=c_q^y$ and hence $y'\equiv y \mod m_q$. Therefore when in step \eqref{Algomp} for the prime $q$ we compute $x$ satisfying if $r-y\equiv x \Ese{R}{s_q}\equiv \mod m_q$ we also have $r-y'\equiv x \Ese{R}{s_q} \mod m_q$.

By \Cref{Differentr}, if the second part of condition (Can--) is satisfied then $r_G(A)=r_G$. Otherwise, $r_G(A)>r_G$,  and hence the factorization $G=AB$ is not minimal, However, after step \eqref{AlgorG} the factorization $G=\GEN{a}\GEN{b}$ satisfy both $|a|=m_G$ and $r_G(\GEN{a})=r_G$. In the remainder of the algorithm the kernel $\GEN{a}$ is not modified and hence this is going to be valid in the remainder of the algorithm.

Finally suppose that the first part of (Can--) fails, so that $p=2$, $\epsilon=-1$ and $s_2=r_2n_2$.
Then $4\mid r$ and $\GEN{t}_{m_2}=\GEN{-1+r_2}_{m_2}$. 
Moreover, by \Cref{ParametersTDelta}.\eqref{mDividernrs}, we have that $s_2\in \{\frac{m_2}{2},m_2\}$ and $m_2\mid r_2n_2$. Therefore $s_2=m_2=r_2n_2$.
Then  $v_2(\Ese{t}{n_2})=v_2(r)+v_2(n)-1=v_2(m)-1$, by \Cref{PropEse}.\eqref{vpEspecial}.
As in the proof of Claim 3, we use the metacyclic factorization of  $G=A\GEN{ba_2}$. If $G=AB$ is minimal then we have $n|(ba_2)^n|=|ba_2|\le |b|=n|a^s|=n\frac{m}{s}$. Therefore $|(ba_2)^n|\le \frac{m}{s}$.
Using \eqref{Potencia} once more and $[b_{2'},a_2]=1$, we obtain $(ba_2)^n=a^ya_2^{\Ese{t}{n_2}}=a_{2'}^ya_2^{\frac{m_2}{2}}$.
Thus $|(ba_2)^n|=2\frac{m}{s}$ and hence $|ba_2|=2\frac{ms}{s}=2|B|$, contradicting the minimality. Thus $G=AB$ is not minimal. 
	Moreover, the new metacyclic factorization satisfies (Can--) because, $|ba_2|_2=2|b|_2$ and hence if $s'=[G:\GEN{ba_2}]$ then $s'_2=\frac{m_2}{2}\ne m_2=r_2n_2$.
\end{proof}

%
%

In order to prove that the last entry of $\INV(G)$ is well defined and prove \Cref{Invariants} we need one more lemma which is inspired in Lemmas 5.5 and 5.7 of \cite{Hempel2000}.

\begin{lemma}\label{cdw}
Let $p$ be a prime and consider the group $P=\G_{m,n,s,\epsilon+r}$ with $m$ and $n$ powers of $p$, $r$ and $s$ divisors of $m$ and $\epsilon\in \{1,-1\}$ satisfying the following conditions: $p\mid r$, $m\mid rn$, if $4\mid m$ then $4\mid r$, if $\epsilon=1$ then $m\mid rs$ and if $\epsilon=-1$ then $2\mid n$, $4\mid m$ and $m\mid 2s$. 
Let $o$ be a divisor of $n$ and $N=\GEN{a,b^o}$.
	Denote
	$$w=\begin{cases} \min(o,\frac{m}{r},\max(1,\frac{s}{r},\frac{so}{n})), & \text{if } \epsilon=1; \\
	1, & \text{if } \epsilon=-1 \text{ and }, o\mid 2  \text{ or } m\mid 2r; \\
	\frac{m}{2r}, & \text{if } \epsilon=-1, 4\mid o<n, 4r\mid m,  \text{ and  if } s\ne nr \text{ then } 2s=m<nr; \\
	\frac{m}{r}, & \text{otherwise}.	
	\end{cases}$$
	
	If $y$ is an integer coprime with $p$ then the following conditions are equivalent:
	\begin{enumerate}
		\item There are $c\in N$ and $d\in b^yN$ such that $P=\GEN{c,d}$, $|c|=m$, $d^n=c^s$ and $c^d=c^{\epsilon+r}$.
		\item $y\equiv 1 \mod w$.
	\end{enumerate}
\end{lemma}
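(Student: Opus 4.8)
Write $t=\epsilon+r$, so that $P=\G_{m,n,s,t}$, $a^{b^i}=a^{t^i}$, and $b^n=a^s$ is central. Since $[P:N]=o$ with $N=\GEN{a,b^o}$, a generator $c$ of a candidate kernel lying in $N$ has normal form $c=b^{ok}a^u$, while the hypothesis $d\in b^yN$ means $d=b^{y+ol}a^v$ for integers $k,l,u,v$; here $\gcd(y,o)=1$ because $\gcd(y,p)=1$ and $o$ is a power of $p$. Condition (1) is then the assertion that for some choice of these integers the pair $(c,d)$ is a standard generating pair of $P$, that is $|c|=m$, $\GEN{c}\trianglelefteq P$ with $c^d=c^t$, $d^n=c^s$ and $P=\GEN{c,d}$. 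The plan is to translate each of these four requirements into congruences on $k,l,u,v$ and $y$, and to show that the resulting system is solvable precisely when $y\equiv 1\pmod w$.

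\textbf{Translating the relations.} Using \eqref{Potencia} one computes
$$c^d=b^{ok}a^{u\,t^{y+ol}-v(t^{ok}-1)},\qquad c^t=b^{okt}a^{u\,\Ese{t^{ok}}{t}},\qquad d^n=a^{s(y+ol)+v\,\Ese{t^{y+ol}}{n}}.$$
Comparing the $\GEN{a}$-cosets in $c^d=c^t$ forces $ok(t-1)\equiv 0\pmod n$, and the remaining $a$-exponent identity pins down $v$ modulo a divisor of $m$; the relation $d^n=c^s$ then becomes a single congruence modulo $m$ whose $p$-adic valuation is governed by the valuations of $\Ese{-}{-}$ recorded in \Cref{PropEse}. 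The requirements $|c|=m$ and $P=\GEN{c,d}$ amount to the order of $c=b^{ok}a^u$ (computed again through \eqref{Potencia}) being exactly $m$ and the image of $d$ having order $n$ in $P/\GEN{c}$. In this way the existence of $(c,d)$ reduces to the solvability, in the free parameters, of an explicit system whose only constraint on $y$ is a congruence $y\equiv 1\pmod{w_0}$ for a modulus $w_0$ that we must identify with $w$.

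\textbf{Extracting the modulus.} The analysis splits on $\epsilon$. When $\epsilon=1$ we have $t-1=r$, $t\equiv 1\pmod p$ and $v_p(t-1)=v_p(r)$, so \Cref{PropEse}.\eqref{opGeneral} gives $o_m(t)=m/r$ and \Cref{PropEse}.\eqref{vpGeneral} evaluates the relevant $\Ese{-}{-}$. The coset constraint (freedom in $l$ only reaches residues modulo $o$) contributes the factor $o$, the order-of-$t$ constraint forcing $t^{y+ol}\equiv t$ contributes $m/r$, and the interplay of $d^n=c^s$ with the optional $b$-component $ok$ of $c$ contributes $\max(1,s/r,so/n)$; taking the binding (smallest) modulus yields $w=\min(o,m/r,\max(1,s/r,so/n))$. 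When $\epsilon=-1$ we have $p=2$, $t\equiv-1\pmod 4$ and $\GEN{t}$ acts with order two on the top layer, so the valuations must instead be read from \Cref{PropEse}.\eqref{vpEspecial} and \eqref{vpEspecial+}; the same bookkeeping produces the three-way value of $w$, with the middle value $m/2r$ arising exactly under the stated conditions $4\mid o<n$, $4r\mid m$ and the dichotomy on $2s=m<nr$.

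\textbf{Both directions and the main obstacle.} Both implications follow at once from this solvability reformulation: if $y\equiv 1\pmod w$ we exhibit explicit $k,l,u,v$, and hence a witness pair $(c,d)$, whereas any admissible $(c,d)$ forces $w\mid y-1$. The routine part is the algebra of \eqref{Potencia}; the delicate part, and the main obstacle, is the precise verification that $c=b^{ok}a^u$ has order \emph{exactly} $m$ and not a proper divisor, together with the $\epsilon=-1$ boundary analysis, where the order-two action changes the valuation of $\Ese{t^{ok}}{\cdot}$ and thereby produces the separate case $w=m/2r$. This is where the $2$-adic estimates in part~(2) of \Cref{PropEse} are essential and where the argument runs parallel to Lemmas 5.5 and 5.7 of \cite{Hempel2000}.
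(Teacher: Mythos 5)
Your setup is sound and matches the paper's frame: writing $c=b^{ok}a^u$, $d=b^{y+ol}a^v$, your three formulas for $c^d$, $c^t$ and $d^n$ are correct consequences of \eqref{Potencia}, and the coset condition $ok(t-1)\equiv 0 \pmod n$ is right. But after that point the proposal stops being a proof. The entire content of the lemma is the equivalence between solvability of that system and the congruence $y\equiv 1 \pmod w$, and your ``Extracting the modulus'' paragraph asserts this rather than derives it: you say each constraint ``contributes'' a factor and that ``taking the binding (smallest) modulus yields $w$'', but no computation supports this, and the heuristic itself is shaky --- simultaneous congruence constraints on $y$ would combine by $\lcm$, not $\min$. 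The $\min$ in $w$ actually arises because the kernel $\GEN{c}$ is not unique: different admissible subgroups $\GEN{c}=\GEN{b^{xv}a}$, for the various $v$ with $o\mid v\mid n$, yield alternative constructions valid under different congruences, and one must both (i) exhibit a witness pair for each branch of the $\min$ (for instance, when $y\not\equiv 1\pmod o$ and $(1+r)^y\not\equiv 1+r \pmod m$, the paper builds $c_1=b^{x\frac{nu}{s}}a$ from the expansion $(1+r)^y=1+r(1+xu)$, proves $|c_1|=m$ by a valuation argument with two sub-cases, solves for $z$, and corrects by a power $c=c_1^k$), and (ii) show that \emph{every} admissible $(c,d)$, for \emph{every} possible $v$, forces $y\equiv 1\pmod w$; in the paper this second step needs $n\mid sv$ and a genuine contradiction argument comparing $\min\left(\frac{m}{r},v,\frac{sv}{n}\right)$ with $w$, which is exactly where the $\max\left(1,\frac{s}{r},\frac{so}{n}\right)$ inside the $\min$ comes from. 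None of this appears in the proposal.

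The same gap is even wider in the case $\epsilon=-1$, where the value $\frac{m}{2r}$ and its side conditions ($4\mid o<n$, $4r\mid m$, the dichotomy on $2s=m<nr$) require a separate $2$-adic analysis (splitting on $v=n$ versus $v=\frac n2$, on $m<nr$ versus $m=nr$, and tracking the parity of $z$) using \Cref{PropEse}.\eqref{vpEspecial} and \eqref{vpEspecial+}; you only name these tools. Indeed your final paragraph concedes that the order computation $|c|=m$ and the $\epsilon=-1$ boundary analysis are ``the main obstacle'' --- but resolving that obstacle \emph{is} the proof of the lemma, so what you have is a correct plan with correct preliminary formulas, not a proof.
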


\begin{proof}
Observe that $N$ is the unique subgroup of $G$ of index $o$ containing $a$. 
	We will make a wide use of \eqref{Potencia} and \Cref{PropEse}, sometimes without specific mention. 
	We consider separately the cases $\epsilon=1$ and $\epsilon=-1$.

	\textbf{Case 1}. Suppose $\epsilon=1$. 
	
	(1) implies (2).
	Suppose that $c$ and $d$ satisfy the conditions of (1). 
	If $w=1$ then obviously (2) holds. So we may assume that $w\ne 1$ and in particular $p\mid o$ and $pr\mid m$.
	The first implies that $N \subseteq \GEN{a,b^p}$ and the second that $P/\GEN{a^p,b^p}$ is not cyclic. 
	Therefore $c\not\in \GEN{a^p,b^p}$ and hence $\GEN{c}=\GEN{b^{xv}a}$ with $o\mid v\mid n$ and $p\nmid x$.
	Write $d=b^{y_1}a^z$ with $y_1,z\in \Z$. 
	From the assumption $d\in b^yN$ we have that $y_1\equiv y \mod o$ and hence $y\equiv y_1\mod w$. 
	Therefore, it suffices to prove that $y_1\equiv 1 \mod w$.
	From $c^d=c^{1+r}$ we have
	$$b^{xv} a^{z(1-(1+r)^{xv})+(1+r)^{y_1}} = (b^{xv}a)^{b^{y_1}a^z} = (b^{xv}a)^{1+r} = b^{xv}a b^{xvr} a^{\Ese{(1+r)^{xv}}{r}}.$$
	Then $n\mid vr$ and $b^{xvr} = a^{xs\frac{vr}{n}}$. Thus
	$$z(1-(1+r)^{xv})+(1+r)^{y_1}-1 \equiv xs\frac{vr}{n}+\Ese{(1+r)^{xv}}{r} \mod m.$$
	This implies that that $r$ divides $xs\frac{vr}{n}$, since $r$ divides $m$. 
	As $r$ is coprime with $x$, it follows that $n$ divides $sv$. 
	Moreover, $(1+r)^{xv} \equiv 1 \mod rv$, by \Cref{PropEse}.\eqref{vpGeneral}, and hence 
	$\Ese{(1+r)^{xv}}{r}\equiv r \mod rv$. As $r,v,m$ and $s$ are powers of $p$ we deduce that 
	$$(1+r)^{y_1} \equiv 1 +r \mod \min(m,rv,\frac{svr}{n}).$$
	Using \Cref{PropEse}.\eqref{opGeneral} it follows that $y_1\equiv 1 \mod \min(\frac{m}{r},v,\frac{sv}{n})$. 
	
	Suppose that $y_1\not\equiv 1 \mod w$. Then 
	$$\min\left(\frac{m}{r},o,\frac{so}{n}\right) \le \min\left(\frac{m}{r},v,\frac{sv}{n}\right) < w = \min\left(\frac{m}{r},o,\max\left(1,\frac{s}{r},\frac{so}{n}\right)\right)$$ 
	and hence $\frac{s}{r}>\left(1,\frac{so}{n}\right)$ and $\frac{m}{r}\ge w =\min\left(o,\frac{s}{r}\right)>\min\left(\frac{m}{r},v,\frac{sv}{n}\right)$. 
	Thus  
	$$\frac{s}{r}\ge w = \min\left(o,\frac{s}{r}\right) >  \min\left(v,\frac{sv}{n}\right)\ge \min\left(o,\frac{so}{n}\right).$$
	Since $n\mid vr$ it follows that $\min(v,\frac{sv}{n})<\frac{s}{r}\le \frac{sv}{n}$ and hence $o\le v = \min(v,\frac{vs}{n})<\min(o,\frac{s}{r})$, a contradiction. 
	
	(2) implies (1). 
	We now suppose that $y\equiv 1 \mod w$ and we have to show that there is $c\in N$ and $d\in b^yN$ satisfying the conditions in (1). 
	If $y\equiv 1 \mod o$ then $bN=b^yN$ and hence $c=a$ and $d=b$ satisfy the desired condition. 
	If $(1+r)^y\equiv 1 +r \mod m$ then $a^{b^y}=a^{1+r}$ and hence $c=a^y$ and $b^y$ satisfy the desired conditions. 
	So we suppose that $y\not\equiv 1 \mod o$ and 
	$(1+r)^y\not\equiv 1+r \mod m$. The first implies that $w<o$ and the second that $y-1$ is not multiple of $o_m(1+r)=\frac{m}{r}$, by \Cref{PropEse}.\eqref{opGeneral} and hence $w<\frac{m}{r}$. 
	Thus $w=\max(1,\frac{s}{r},\frac{os}{n})<\min(o,\frac{m}{r})$.
	
	By \Cref{PropEse}.\eqref{opGeneral} we have $(1+r)^y = 1+r(1+xu)$ with $p\nmid x$, $u$ a power of $p$ and $v_p(w)\le v_p(u)=v_p(y-1)<v_p(\frac{m}{r})\le v_p(s)$. Moreover, if $u=1$ then $p\nmid 1+x$. 
	Let $c_1=b^{x\frac{nu}{s}}a$. 
	We now prove that $|c_1|=m$. 
	Observe that $\frac{nu}{s}\ge \frac{nw}{s} \ge o$. 
	Therefore $c_1\in N$.
	Moreover, as $v_p(u)<v_p(s)$ it follows that $|c_1\GEN{a}|=\frac{s}{u}$ and $c_1^{\frac{s}{u}}=a^{xs+\Ese{(1+r)^{x\frac{nu}{s}}}{\frac{s}{u}}}$. 
	If $u\ne 1$ then $v_p(r)\ge v_p(\frac{s}{w})\ge v_p(\frac{s}{u}) = v_p(\Ese{(1+r)^{x\frac{nu}{s}}}{\frac{s}{u}})= v_p(xs+\Ese{(1+r)^{x\frac{nu}{s}}}{\frac{s}{u}})$ and therefore $G'=\GEN{a^r}\subseteq \GEN{c_1}$ and $|c_1|=m$, as desired. 
	Otherwise, i.e. if $u=1$ then $w=1$ and hence $s\le r$ and $p\mid o\mid \frac{n}{s}$. Then $xs+\Ese{(1+r)^{x\frac{nu}{s}}}{s}\equiv s(x+1) \not\equiv 0 \mod pr$ because $s\le r$ and $p\nmid x+1$. Therefore also in this case $v_p(r)\le v_p(xs+\Ese{(1+r)^{x\frac{nu}{s}}}{s})$ and hence $G'\subseteq \GEN{c_1}$ and $|c_1|=m$, as desired. 
	
Since $(1+r)^{x\frac{nu}{s}} \equiv 1 \mod r\frac{nu}{s}$ we have $\Ese{(1+r)^{x\frac{nu}{s}}}{r}\equiv r \mod r\frac{nu}{s}$. 
Therefore $(1+r)^y-1-xru-\Ese{(1+r)^{x\frac{nu}{s}}}{r} \equiv 0 \mod \frac{rnu}{s}$. 
Moreover, $v_p(1-(1+r)^{x\frac{nu}{s}})=v_p(r\frac{nu}{s})$, and hence there is an integer  $z$ satisfying
	$$z(1-(1+r)^{x\frac{nu}{s}})+(1+r)^y \equiv 1 + xru+\Ese{(1+r)^{xu}}{r} \mod m.$$
Let $d=b^ya^z\in b^yN$. 
Using that $u\ge w\ge \frac{s}{r}$ we have 
$$c_1^d = (b^{x\frac{nu}{s}}a)^{b^ya^z} 
= b^{x\frac{nu}{s}}a^{z(1-(1+r)^{x\frac{nu}{s}})+(1+r)^y} = b^{x\frac{nu}{s}}a^{1+xru+\Ese{(1+r)^{x\frac{nu}{s}}}{r}} = 
c_1^{1+r},$$
On the other hand
	$$d^n = (b^ya^z)^n = 
	a^{sy +z\Ese{(1+r)^y}{n}}$$
and 
$$c_1^s = (b^{x\frac{nu}{s}}a)^s = a^{xus+\Ese{(1+r)^{x\frac{nu}{s}}}{s}}.$$
if $s\ge n$ then  $o>w=\max(\frac{so}{n},\frac{s}{r})\ge \frac{so}{n}\ge o$, a contradiction. 
Therefore, $s$ is a proper divisor of $n$ and hence  $v_p(sy+z\Ese{(1+r)^y}{n})=s$. 
Then $d^n$ and $c_1^s$ are elements of $\GEN{a}$ of the same order. Therefore $b^n=c^{ks}$ for some integer $k$ coprime with $p$. Then $c=c_1^k$ and $d$ satisfy the conditions of (1).

	\textbf{Case 2}. Suppose that $\epsilon=-1$. 
	
	(1) implies (2). 
	Suppose that $c$ and $d=b^ya^z$ satisfy the conditions of (1). 
	Then $4\mid r$ and $G'=\GEN{a^2}=\GEN{c^2}$.
	As in Case 1 we may assume that $w\ne 1$. Then both $o$ and $\frac{m}{r}$ are multiple of $4$ and we must prove, on the one hand that $y\equiv 1 \mod \frac{m}{2r}$ and, on the other hand that $y\equiv 1 \mod \frac{m}{r}$, if one of the following conditions hold: $o=n$ or, $s=m\ne nr$, or $2s=m=nr$. 
	From $4\mid o$ and $G/\GEN{c}$ being cyclic we deduce  $\GEN{c}=\GEN{b^{xv}a}$ with $o\mid v\mid n$ and $2\nmid x$. 
	From $G'=\GEN{a^2}=\GEN{c^2}$ it follows that $\frac{n}{2}\mid v$ so that $v$ is either $n$ or $\frac{n}{2}$. 
	If $v=n$ then $\GEN{c}=\GEN{a}$. 
	Therefore $a^{-1+r}=a^d=a^{(-1+r)^y}$ and hence $(-1+r)^{y-1} \equiv 1 \mod 2^m$. 
	Then $y\equiv 1 \mod \frac{m}{r}$ by \Cref{PropEse}.\eqref{opEspecial}.
	This proves the result if $o=n$ because in that case $v$ is necessarily $n$.
	
	Suppose otherwise that $v=\frac{n}{2}$. Then we distinguish the cases $m<nr$ and $m=nr$. 

%
	Assume that $m<nr$. Then, as $4\mid o\mid v$ we have $o_m(-1+r)=\max\left(2,\frac{m}{r}\right) \le \frac{n}{2}=v$ and hence $b^v$ is central in $G$. Then, having in mind that $4\mid r$ and $m\mid 2s$, we have 
	$$b^{xv}a^{(-1+r)^y}= (b^{xv}a)^{b^ya^z}=(b^{xv}a)^{-1+r}= b^{xv}a (b^{xv}a)^{r-2} 
	= b^{xv}a^{r-1+xs(\frac{r}{2}-1)} = b^{xv} a^{-1+s+r}.$$
	Therefore $(-1+r)^y \equiv -1+r+s \mod m$ and in particular $(-1+r)^y \equiv -1+r \mod s$, since $s\mid m$. 
	Using \Cref{PropEse} once more we deduce that $y\equiv 1 \mod \frac{m}{2r}$ and if $s=m$ then $y\equiv 1 \mod \frac{m}{r}$.
	
	Suppose otherwise that $m=nr$. Then, from \Cref{PropEse}.\eqref{vpEspecial} we have $v_2((-1+r)^v-1)=v_2(r)+v_2(v)=v_2(r)+v_2(n)-1=v_2(\frac{m}{2})$ so that $a^{b^v}=a^{1+\frac{m}{2}}$ and $(b^{xv}a)^2=a^{2+s+\frac{m}{2}}$ and hence $(b^{xv}a)^4=a^4$. 
	As $4\mid o$ it follows that $(b^{xv}a)^n=a^n$. 
	On the other hand, as $y$ is odd, it follows that $v_2((-1+r)^y+1)=v_2(r)\ge 2$, by \Cref{PropEse}.\eqref{vpEspecial+}. Therefore, $v_2(\Ese{(-1+r)^y}{n})=v_2(rn)-1=v_2(m)-1$, by \Cref{PropEse}.\eqref{vpEspecial}. 
	Then $\Ese{(-1+r)^y}{n}\equiv \frac{m}{2} \mod m$ an hence, having in mind that $8\mid \frac{m}{2}\mid s$ we deduce that $a^s=c^s=d^n=a^{ys+z\Ese{(-1+r)^y}{n}}=a^{s+z\frac{m}{2}}$. Therefore $z$ is even. 
	On the other hand from $c^d=c^{-1+r}$ and having in mind that $(-1+r)^v-1\equiv \frac{m}{2} \mod m$ and $z$ is even, we obtain
	$$b^{xv}a^{(-1+r)^y} =  (b^{xv}a)^{b^ya^z}=(b^{xv}a)^{-1+r}= b^{xv}a (b^{xv}a)^{r-2} 
	= b^{xv}a (a^{xs+2+\frac{m}{2}})^{\frac{r}{2}-1} = 
	b^{xv}a^{-1+s+r+\frac{m}{2}}.$$
	Therefore $(-1+r)^y \equiv -1+r+s+\frac{m}{2} \mod m$. 
	Again, from $m\mid 2s$ and \Cref{PropEse}.\eqref{opEspecial} we deduce that $y\equiv 1 \mod \frac{m}{2r}$ and if $s=\frac{m}{2}$ then $y\equiv 1 \mod \frac{m}{r}$.
	
	(2) implies (1). Suppose that $y\equiv 1 \mod w$. As $y$ is odd, if $o\mid 2$ then $b\in b^yN$ and hence $a$ and $b$ satisfy condition (1).
	So we assume from now on that $4\mid o$. In particular $4\mid n$. 
	Suppose that $m\mid 2r$, i.e. $r$ is either $m$ or $\frac{m}{2}$
	and let $c=a^y$ and $d=b^ya^2$. In this case $b^2$ is central in $P$ and hence $c^d=c^b=c^{-1+r}$ and applying statements \eqref{vpEspecial} and \eqref{vpEspecial+} of \Cref{PropEse} we obtain  $d^n=a^{ys+\Ese{(-1+r)^y}{n}}=a^{ys}=c^s$. Hence $c$ and $d$ satisfy the conditions of (1). 
	
	Thus from now on we assume that $4$ divides both $o$ and $\frac{m}{r}$. Suppose that $y\equiv 1 \mod \frac{m}{r}$. Then $a^{b^y}=a^b=a^{-1+r}$ because $b^{\frac{m}{r}}$ is central in $P$. 
	Moreover, as $m\mid 2s$ and $y$ is odd we have $(b^y)^n = a^{sy}=a^s$. Therefore $c=a$ and $d=b^y$ satisfy condition (1) and this finishes the proof of the lemma if $w=\frac{m}{r}$ and it also proves that for $w=\frac{m}{2r}$ we may assume that $y\not\equiv 1 \mod \frac{m}{r}$.
	So suppose that $w=\frac{m}{2r}$ and $y\not\equiv 1 \mod \frac{m}{r}$. 
	Then $y\equiv 1+\frac{m}{2r}\mod \frac{m}{r}$, $o<n$ and either $m=s=nr$ or $2s=m<nr$. Let $c=b^{\frac{n}{2}}a$ and $d=b^y$. 
	Then, in both cases, $c^2=a^{2+\frac{m}{2}}$ and, as $\frac{m}{2}$ is multiple of $4$ we have that $G'=\GEN{a^2}=\GEN{c^2}$, $|c|=m$ and $c^s=a^s$. 
	Moreover, 
	$$c^{-1+r}=(b^{\frac{n}{2}}a)^{-1+r} = 
	b^{\frac{n}{2}}a (b^{\frac{n}{2}}a)^{r-2} = 
	b^{\frac{n}{2}}a a^{(2+\frac{m}{2})(\frac{r}{2}-1)} = 
	b^{\frac{n}{2}}a^{-1+r+ \frac{m}{2}}= 
	b^{\frac{n}{2}} a^{(-1+r)(1+\frac{m}{2})} = 
	(b^{\frac{n}{2}}a)^{b^{1+\frac{m}{2r}}}=c^d$$
	and
	$$d^n = a^{s(1+\frac{m}{2r})} = a^s = c^s.$$	
	Then $c$ and $d$ satisfy the conditions of (1).
\end{proof}

\begin{theorem}\label{MetaTh}
	Let $m,n,s\in \N$ with $s\mid m$ and let $T$ and $\bar T$ be $(n,s)$-canonical cyclic subgroups of $\U_m$. 
	Set $[r,\epsilon,o]=[T]$, $[\bar r,\bar \epsilon,\bar o]=[\bar T]$, $\pi=\pi(r)\cup (\pi(n)\setminus \pi(m))$, $\bar \pi = \pi(\bar r)\cup (\pi(n)\setminus \pi(m))$, $m'=[T,n,s]$ and $\bar m'=[\bar T,n,s]$.
	
	Then the following statements are equivalent.
	\begin{enumerate}
		\item $\G_{m,n,s,T}$ and $\G_{m,n,s,\bar T}$ are isomorphic.
		\item $\Res_{m'}(T)=\Res_{\bar m'}(\bar T)$.
		\item $\pi=\bar \pi$, $\Res_{m_{\pi'}}(T_{\pi'})=\Res_{m_{\pi'}}(\bar T_{\pi'})$ and $\Res_{m_{\pi'}m'_p}(T_p)=\Res_{m_{\pi'}m'_p}(\bar T_p)$ for every $p\in \pi$.
	\end{enumerate}
\end{theorem}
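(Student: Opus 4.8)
The plan is to split the three-way equivalence into an arithmetic comparison, (2)$\Leftrightarrow$(3), of the restricted subgroups, and a group-theoretic comparison, (1)$\Leftrightarrow$(3), of the groups themselves; \Cref{cdw} is the device that, one prime at a time, translates between the existence of a matching pair of generators and a congruence on the twisting exponent. Throughout I write $\nu=\pi(m)\setminus\pi(r)$, so that the set $\pi'$ appearing in (3) satisfies $m_{\pi'}=m_\nu$.

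I would treat (2)$\Leftrightarrow$(3) first, as it is essentially the Chinese Remainder Theorem. By \Cref{ParametersTDelta} one has $\pi(m')=\pi(m)$, $m'_\nu=m_\nu$ and $\inv{T}=\inv{\Res_{m'}(T)}$; in particular the triple $(r,\epsilon,o)$, and hence $\pi$ and the modulus $m'$, are recovered from the subgroup $\Res_{m'}(T)$ alone. Thus (2) already forces $\pi=\bar\pi$ and $m'=\bar m'$, and under $\U_{m'}\cong\prod_{p\in\pi(m)}\U_{m'_p}$ the equality $\Res_{m'}(T)=\Res_{\bar m'}(\bar T)$ becomes agreement on each cyclic factor. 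Using the Hall decomposition $T=T_{\pi'}\times\prod_{p\in\pi}T_p$ and separating the $\pi'$-component from the individual $p$-parts reorganizes this into the three clauses of (3); here I note that for $p\in\pi(n)\setminus\pi(m)$ one has $m'_p=1$, so the corresponding clause compares only the image $\Res_{m_\nu}(T_p)$ recording how that $p$-torsion acts on $A_\nu$, which is exactly the part of the $\nu$-component not already seen by $\Res_{m_\nu}(T_{\pi'})$.

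For (3)$\Rightarrow$(1) I would build the isomorphism from local pieces. By \Cref{Basic}.\eqref{BasicSemi} the group splits as $A_\nu\rtimes\big(B_\nu\times\prod_{p\notin\nu}A_pB_p\big)$, and $\pi=\bar\pi$ guarantees the same splitting for $\bar T$. The agreement $\Res_{m_\nu}(T_{\pi'})=\Res_{m_\nu}(\bar T_{\pi'})$ matches the actions on the central factor $A_\nu$ directly, and the cyclic factors coming from $\pi(n)\setminus\pi(m)$ cause no obstruction. For each $p\in\pi(r)$ the local factor $A_pB_p$ is of the form handled by \Cref{cdw} (with the local data $m_p,n_p,s_p,r_p,o_p$ and $w=m'_p/r_p$): the hypothesis $\Res_{m_\nu m'_p}(T_p)=\Res_{m_\nu m'_p}(\bar T_p)$ yields a twisting exponent $y\equiv1\bmod w$, and the implication (2)$\Rightarrow$(1) of \Cref{cdw} produces generators realizing the $\bar T$-relations inside $A_pB_p$. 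Gluing these isomorphisms along the semidirect decomposition gives the global isomorphism.

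Finally, for (1)$\Rightarrow$(3) I would start from an isomorphism $\varphi$. Since $T$ and $\bar T$ are $(n,s)$-canonical, \Cref{MiniMF} identifies both presentations with minimal factorizations, so $\varphi$ carries a minimal kernel to a minimal kernel and preserves $\pi$, the $\nu$-action, and the local data. Applying the implication (1)$\Rightarrow$(2) of \Cref{cdw} at each $p\in\pi(r)$ forces the exponent relating the two $b$-generators to satisfy $y\equiv1\bmod w$, which is the congruence $\Res_{m_\nu m'_p}(T_p)=\Res_{m_\nu m'_p}(\bar T_p)$; combined with the comparison on the $\nu$-part this yields (3). I expect the genuine difficulty to be concentrated at $p=2$ with $\epsilon=-1$: there the local $2$-group can be dihedral or quaternionic, the invariant $r$ need not be unique (\Cref{Differentr}), and the formula \eqref{m'} for $m'_2$ branches into several subcases. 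Keeping $\varphi$ compatible with a single choice of minimal kernel across these branches, and verifying that $\Res_{m'}(T)$ is insensitive to the residual ambiguity in $r$, is where the main work of the proof lies.
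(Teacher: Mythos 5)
Your handling of the two group--theoretic implications coincides with the paper's own proof: both use \Cref{MiniMF} to see that the given presentations are minimal factorizations, \Cref{Basic} to match the Hall $\pi'$-parts, conjugation of Sylow $p$-subgroups so that \Cref{cdw} can be applied one prime at a time, and a gluing of the local generators along the decomposition of \Cref{Basic}.\eqref{BasicSemi}; likewise your (1)$\Rightarrow$(3) is the paper's (1)$\Rightarrow$(2) in mild disguise. The structural difference is that the paper proves the cycle (1)$\Rightarrow$(2)$\Rightarrow$(3)$\Rightarrow$(1) and \emph{never} proves (3)$\Rightarrow$(2) by direct arithmetic, whereas you propose (2)$\Leftrightarrow$(3) as a standalone consequence of the Chinese Remainder Theorem. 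That shortcut is where your proposal has a genuine gap.

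The problem is that every modulus occurring in statement (3) --- the set $\pi$ and the numbers $m'_p$ --- is computed from the invariants of $T$ alone, and the clauses of (3) do not formally recover those invariants; so neither (3)$\Rightarrow$(2) nor your version of (3)$\Rightarrow$(1) can even get started without a preliminary argument showing $\epsilon=\bar\epsilon$, $r_p=\bar r_p$ (hence $m'=\bar m'$). This is not a formality. For instance, take $p$ odd, $m=p^3$, $n=s=p^2$, $T=\GEN{1+p}_{p^3}$ and $\bar T=\GEN{1+p^2}_{p^3}$: both lie in $\U_m^{n,s}$ and are $(n,s)$-canonical, with $\inv{T}=(p,1,1)$, $\inv{\bar T}=(p^2,1,1)$, $m'=p$ and $\bar m'=p^2$; all clauses of (3), taken literally with the moduli computed from $T$, hold trivially (everything restricts to the trivial subgroup of $\U_p$), yet $\G_{p^3,p^2,p^2,1+p}$ and $\G_{p^3,p^2,p^2,1+p^2}$ have derived subgroups of orders $p^2$ and $p$, so neither (1) nor (2) holds. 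Thus (3) must be used together with the parameter--matching step --- concretely, the deduction of $\epsilon=\bar\epsilon$ and $r_p=\bar r_p$ from $\GEN{\epsilon^{p-1}+r_p}_{m'_p}=\Res_{m'_p}(T_p)=\Res_{m'_p}(\bar T_p)=\GEN{\bar\epsilon^{p-1}+\bar r_p}_{m'_p}$, which the paper performs at the start of its proof of (3)$\Rightarrow$(1) and which requires reading the third clause with the moduli attached to \emph{both} $T$ and $\bar T$ (one needs $\bar r_p\mid m'_p$ as well as $r_p\mid m'_p$). Nothing in your sketch plays this role, and it is indispensable for you twice over: \Cref{cdw} only produces generators satisfying $c^d=c^{\epsilon^{p-1}+r_p}$, so your local gluing realizes the relations of $\G_{m,n,s,\bar T}$ only after the exponents are known to agree. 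Two smaller points: even granting equal parameters, reassembling (2) from (3) is not bare CRT --- a cyclic subgroup of $\U_{m'}\cong\prod_q\U_{m'_q}$ is not determined by its projections, which is exactly why (3) carries the joint moduli $m_{\pi'}m'_p$ and why one must also check that the cross--restrictions $\Res_{m'_q}(T_p)$, $q\neq p$, vanish (true, since $\Res_{m_q}(T)$ is a $q$-group by the definition of $r$, but it has to be said). Finally, your expectation that the main difficulty sits at $p=2$, $\epsilon=-1$ is misplaced for this theorem: since $T$ and $\bar T$ are assumed $(n,s)$-canonical, the ambiguity of \Cref{Differentr} has already been dispatched by \Cref{MiniMF} and \Cref{epsilonWD}, and the paper's proof of \Cref{MetaTh} treats $p=2$ uniformly with the other primes inside \Cref{cdw}; the delicate point is the parameter matching just described.
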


\begin{proof}
	Let $G=\G_{m,n,s,T}$ and $\bar G=\G_{m,n,s,\bar T}$.
	To distinguish the generators $a$ and $b$ in the presentation of $G$ and $\bar G$ we denote the latter by $\bar a$ and $\bar b$. 
	We also denote $A=\GEN{a}$, $B=\GEN{b}$, $\bar A=\GEN{\bar a}$ and $\bar B=\GEN{\bar b}$. 
	The hypothesis warrants that $G=AB$ and $\bar G=\bar A \bar B$ are minimal metacyclic factorizations by \Cref{MiniMF}.
	In particular, $|A|=|\bar A|=m=m_G=m_{\bar G}$, $[G:A]=[\bar G:\bar A]=n=n_G=n_{\bar G}$, $[G:B]=[\bar G:\bar B]=s=s_G=s_{\bar G}$, $T=T_G(A)$ and $\bar T=T_{\bar G}(\bar A)$.
	
	(2) implies (3) Suppose that statement (2) holds. Then, using that $\pi(m)=\pi(m')=\pi(\bar m')$, we have $\Res_p(T)=\Res_p(\Res_{m'}(T))=\Res_p(\Res_{m'}(\bar T))=\Res_p(\bar T)$ for every prime $p$ dividing $m$. 
	Thus, $\pi'=\bar \pi'$ and, as $m_{\pi'}=m'_{\pi}$, we have $\Res_{m_{\pi'}}(T_{\pi'}) = \Res_{m'_{\pi'}}(T)_{\pi} = \Res_{m'_{\pi'}}(\bar T)_{\pi} = \Res_{m_{\pi'}}(\bar T_{\pi'})$ and 
	$\Res_{m_{\pi'}m'_p}(T_p) = \Res_{m'_{\pi'\cup \{p\}}}(T)_p = \Res_{m'_{\pi'\cup \{p\}}}(\bar T)_p = \Res_{m_{\pi'}m'_p}(\bar T_p)$ for every $p\in \pi(m)\setminus \pi'$.

	(1) implies (2). Suppose that $G\cong \bar G$. Then, as $T$ and $\bar T$ are $(n,s)$-canonical they yield the same parameters, i.e. $\pi'=\bar \pi'$, $o=\bar o$, etc.
	
	Let $f:\bar G\rightarrow G$ be an isomorphism and let $c=f(\bar a)$, $d=f(\bar b)$, $C=\GEN{c}$ and $D=\GEN{d}$. 
	Then $C_{\pi'}=f(\bar {G'}_{\pi'})={G'}_{\pi'}=A_{\pi'}$, by \Cref{Basic}.\eqref{BasicG'}.
	Furthermore, $C_{\pi'}D_{\pi'}=A_{\pi'}B_{\pi'}$ because $AB$ and $\bar A\bar B$ are the unique Hall $\pi'$-subgroup of $G$ and $\bar G$, respectively. 
	Then $\Res_{m_{\pi'}}(T)=T_G(A_{\pi'}) = T_G(C_{\pi'})=\Res_{m_{\pi'}}(\bar T)$. 
	As $\Res_{m_{\pi}}(T_{\pi'})=\Res_{m_{\pi}}(\bar T_{\pi'})=1$ it follows that $\Res_{m'}(T_{\pi'})=\Res_{m'}(\bar T_{\pi'})$. 
	Since $T$ and $\bar T$ are cyclic, it remains to prove that $\Res_{m'}(T_p)=\Res_{m'}(\bar T_p)$ for every $p\in \pi$. 
	Moreover, as $G$ and $\bar G$ have the same parameters $\epsilon$ and $r$ we have $\Res_{m_p}(T_p)=\Res_{m_p}(\bar T_p)=\GEN{\epsilon^{p-1}+r_p}_{m_p}$. 
	Denote $R=\epsilon^{p-1}+r_p$ and select  generators $t$ of $\Res_{m_{\pi'}m'_p}(T_p)$ and $\bar t$ of $\Res_{m_{\pi'}m'_p}(T_p)$ such that $\Res_{m_p}(t)=\Res_{m_p}(\bar t) [R]_{m_p}$.  
	We already know that $\Res_{m'_{\pi'}}(T)=\Res_{m'_{\pi'}}(\bar T)$ and in particular, there is an integer $x$ coprime with $p$ such that $\bar t=t^x \mod m_{\pi'}$. 
	If $o_p\le 2$ then $\Res_{m'_{\pi'}}(t)=\Res_{m'_{\pi'}}(\bar t)$ and if $o_{m'_p}(R) \le 2$ then $\Res_{m'_p}(t^x)=[R^x]_{m'_p}=[R]_{m_p}=\Res_{m'_p}(\bar t)$. 
	In both cases $\Res_{m_{\pi'}m'_p}(T)=\GEN{t}=\GEN{t^x}=\Res_{m_{\pi'}m'_p}(\bar T)$, as desired. 
	Therefore, in the remainder we may assume that both $o_p$ and $o_{m'_p}(R)$ are greater than $2$ and, in particular, 
	$o_{m'_p}(R)=\frac{m'_p}{r_p}=\Res_{m'_p}(T)$ and this number coincides with the $w$ in \Cref{cdw}.
	
	On the other hand $A_pB_p$ and $f(\bar A_p \bar B_p)=C_pD_p$ are Sylow $p$-subgroup of $G$ and hence they are conjugate in $G$. 
	Composing $f$ with an inner automorphism of $G$ we may assume that $C_pD_p=A_pB_p$.
	Then $\GEN{c,d^{o_p}} =  f(\GEN{\bar a,\bar b^{o_p}}) = 
	f(C_{\bar G_p}(\bar {G'}_{\pi'}))=C_{G_p}({G'}_{\pi'})=\GEN{a,b^{o_p}}$.
	By \Cref{cdw} we have $d=b^yg$ for some $g\in C_{G_p}({G'}_{\pi'})$ and $y\equiv 1 \mod w$.
	Thus $\Res_{m_{\pi'}}(\bar t)=\Res_{m_{\pi'}}(t^y)$ and $\Res_{m'_p}(\bar t) = \Res_{m'_p}(t) = \Res_{m'_p}(R) = \Res_{m'_p}(R^y) = \Res_{m'_p}(t^y)$, because $y\equiv 1 \mod o_{m'_p}(R)$. Thus $\Res_{m'_{\pi'}m'_p}(\bar T_p)=\Res_{m'_{\pi'}m'_p}(\bar t)= \Res_{m'_{\pi'}m'_p}(t^y)= \Res_{m'_{\pi'}m'_p}(T_p)$, as desired. 
	
	(3) implies (1) Suppose that the conditions of (3) holds. We may assume that $a=\bar a$ and take generators $t$ of $T$ and $\bar t$ of $\bar T$ so that $G=\GEN{a,b}$, $\bar G=\GEN{a,\bar b}$, with $|a|=m$, $[G:\GEN{a}]=n$, $b^n=a^s$, $a^b=a^t$, $a^{\bar b}=a^{\bar t}$.
	Moreover, from the assumption we may assume $a^{b_{\pi'}}=a^{\bar b_{\pi'}}$ and for every $p\in \pi$ we have 
	$\Res_{m_{\pi'}m'_p}(T_p)=\Res_{m_{\pi'}m'_p}(\bar T_p)$.
	In particular, for every $p\in \pi$, we have $\GEN{\epsilon^{p-1}+r_p}_{m'_p}=\Res_{m'_p}(T_p)=\Res_{m'_p}(\bar T_p)=\GEN{\bar \epsilon^{p-1}+\bar r_p}$. 
	Since $r_p\mid m'_p\mid m_p$ it follows that $\epsilon=\bar \epsilon$ and $r_p=\bar r_p$. 
	Thus $r=\bar r$. 
	
	We claim that for every $p\in\pi$ we can rewrite $G_p=\GEN{a_p,b_p}$ as $G_p=\GEN{c_p,d_p}$ with $c_p\in \GEN{a_p,b_p^{o_p}}=C_{G_p}(a_{\pi'})$ and $d_p\in b^yC_{G_p}(a_{\pi'})$ such that $|c_p|=m_p$, $c_p^{d_p}=c_p^{R_p}$, $a_{\pi'}^{d_p}=a_{\pi'}^{\bar b_p}$ and $d_p^{n_p}=c_p^{s_p}$.
	
	Indeed, let $p\in \pi$. The assumption $\GEN{\Res_{m_{\pi'}m'_p}(t_p)}=\GEN{\Res_{m_{\pi'}m'_p}(\bar t_p)}$ implies that there is an integer $y$ coprime with $|\Res_{m_{\pi'}m'_p}(t_p)|$ such that $\Res_{m_{\pi'}m'_p}(\bar t_p)=\Res_{m_{\pi'}m'_p}(t_p)^y$. 
	If $o_p\le 2$ or $o_{m_p}(R)\le 2$ then, as in the proof of (1) implies (2) we have that $\Res_{m_{\pi'}m_p}(t)=\Res_{m_{\pi'}m_p}(\bar t)$ so that $c_p=a_p$ and $d_p=b_p$ satisfies the desired conditions. So assume that $o_p>2$ and $o_{m_p}(R)>2$. 
	From the equality $a_p^{b_p}=a_p^{\bar b_p}$ we deduce that $R^y \equiv R \mod m'_p$ 
	and this implies that $y\equiv 1 \mod w$ where $w=o_{m'_p}(R)=\frac{m'_p}{r_p}$ and again this $w$ coincides with the one in \Cref{cdw}.
	Applying \Cref{cdw} we deduce that $\GEN{a_p,b_p}$ contain elements $c_p\in \GEN{a_p,b_p^o}=C_{G_p}(a_{\pi'})$ and $d_p\in b^yC_{G_p}(a_{\pi'})$ such that $\GEN{a_p,b_p}=\GEN{c_p,d_p}$, $|c_p|=m_p$,  $a_{\pi'}^{d_p}=a_{\pi'}^{b_p^y}=a_{\pi'}^{\bar b_p}$, $c_p^{d_p}=c_p^{R_p}$ and $d_p^{n_p}=c_p^{s_p}$, as desired. This finishes the proof of the claim. 
	
	For every $p\in \pi$ let $c_p$ and $d_p$ as in the claim and set $c = a_{\pi'}\prod_{p\in \pi} c_p$ and $d=b_{\pi'}\prod_{p\in \pi} d_p$ we deduce that $G=\GEN{c,d}$ with $|c|=m$, $d^n=c^s$ and $c^d=a^{\bar t}$. Therefore $G\cong \bar G$. 
\end{proof}

The following corollary is a direct consequence (1) implies (2) of \Cref{MetaTh}. It shows that $\Delta_G$ is well defined.

\begin{corollary}\label{DeltaWD}
If $G=AB=CD$ are two minimal factorizations of $G$ then $\Delta(AB)=\Delta(CD)$.
\end{corollary}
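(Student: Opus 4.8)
The plan is to reduce everything to the implication $(1)\Rightarrow(2)$ of \Cref{MetaTh}, as the corollary header already advertises. First I would pin down that the two minimal factorizations $G=AB$ and $G=CD$ carry the \emph{same} numerical parameters. By the definition of a minimal metacyclic factorization, minimality means that $(|A|,r_G(A),[G:B])$ attains the lexicographic minimum over all metacyclic factorizations of $G$; hence each of the three coordinates is an invariant of $G$. In particular $|A|=|C|=m_G=:m$, so also $n:=[G:A]=[G:C]=|G|/m=n_G$ and $s:=[G:B]=[G:D]=s_G$ agree for the two factorizations.

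Next I would set $T=T_G(A)$ and $\bar T=T_G(C)$, both cyclic subgroups of $\U_m$. Since $AB$ and $CD$ are minimal, \Cref{MiniMF} tells me that $T$ and $\bar T$ are $(n,s)$-canonical. Moreover, as recorded at the start of \Cref{SectionNotation}, the factorization $G=AB$ gives $G\cong\G_{m,n,s,T}$ and likewise $G\cong\G_{m,n,s,\bar T}$, so in particular $\G_{m,n,s,T}\cong\G_{m,n,s,\bar T}$. This places me exactly in the hypotheses of \Cref{MetaTh} applied to the two $(n,s)$-canonical subgroups $T$ and $\bar T$.

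Finally I would invoke $(1)\Rightarrow(2)$ of \Cref{MetaTh}: the isomorphism $\G_{m,n,s,T}\cong\G_{m,n,s,\bar T}$ forces $\Res_{m'}(T)=\Res_{\bar m'}(\bar T)$, where $m'=[T,n,s]$ and $\bar m'=[\bar T,n,s]$. By the very definition of $\Delta(\cdot)$ we have $\Delta(AB)=\Res_{[T,n,s]}(T)=\Res_{m'}(T)$ and $\Delta(CD)=\Res_{[\bar T,n,s]}(\bar T)=\Res_{\bar m'}(\bar T)$, so this is precisely the desired equality $\Delta(AB)=\Delta(CD)$. I expect no genuine obstacle: all the substance lives in \Cref{MetaTh} and \Cref{MiniMF}, and the only things to verify are the bookkeeping identifications that $m,n,s$ coincide for the two factorizations and that $\Delta(\cdot)$ is literally the restriction map appearing in \Cref{MetaTh}(2).
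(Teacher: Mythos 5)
Your proof is correct and follows exactly the paper's intended route: the paper derives \Cref{DeltaWD} as a direct consequence of (1)$\Rightarrow$(2) of \Cref{MetaTh}, and your write-up simply supplies the bookkeeping (equality of $m,n,s$ from lexicographic minimality, $(n,s)$-canonicity via \Cref{MiniMF}, and $G\cong\G_{m,n,s,T}\cong\G_{m,n,s,\bar T}$) that the paper leaves implicit.
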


\section{Proofs of the main results}\label{SectionProofs}

\begin{proofof}\emph{\Cref{Invariants}}.
Let $G$ and $\bar G$ be finite metacyclic groups and let $G=AB$ and $\bar G=\bar A \bar B$ be minimal metacyclic factorizations of $G$ and $\bar G$ respectively. Denote $m=|A|$, $\bar m=|\bar A|$, $n=[G:A]$, $\bar n=[\bar G:\bar A]$, $s=[G:B]$, $\bar s = [\bar G:\bar B]$, $T=T_G(A)$ and $\bar T = T_{\bar G}(\bar A)$. We also denote $m'=[T,n,s]$, $\bar m'=[\bar T, \bar n,\bar s]$, $\Delta=\Res_{m'}(T)$ and $\bar \Delta=\Res_{\bar m'}(\bar T)$.
Then $G\cong \G_{m,n,s,T}$, $\bar G \cong \G_{\bar m,\bar n, \bar s, \bar T}$, $m=m_G$, $n=n_G$, $s=s_G$, $\bar n=n_{\bar G}$, $\bar m=m_{\bar G}$, $s=s_{\bar G}$, $T$ is $(n,s)$-canonical and $\bar T$ is $(\bar n,\bar s)$-canonical. 
Moreover, $\Delta=\Delta_G$ and $\bar \Delta = \Delta_{\bar G}$. 

If $G\cong G'$ then $m=\bar m$, $n=\bar n$, $s=\bar s$ and, by \Cref{MetaTh} we have $\Delta=\bar \Delta$. Thus $\INV(G)=\INV(\bar G)$.

Conversely, if $\INV(G)=\INV(\bar G)$ then $m=|A|=m_G=m_{\bar G}=|\bar A|=\bar m$ and similarly $n=\bar n$ and $s=\bar s$. Moreover, $\Res_{m'}[T]=\Delta_G=\Delta_{\bar G}=\Res_{\bar m'}(\bar T)$. Then $G\cong \bar G$ by \Cref{MetaTh}.
\end{proofof}

In the remainder of the section we use the notation in \Cref{Parameters}.\medskip

\begin{proofof}\emph{(1) implies (2) in \Cref{Parameters}}.
Suppose that $(m,n,s,\Delta)=\INV(G)$ for some metacyclic group $G$ and let $G=AB$ be a minimal factorization of $G$. Then $m=m_G=|A|$, $n=n_G=[G:A]$, $s=s_G=[G:B]$ and if $T=T_G(A)$ then $\Delta=\Delta(AB)=\Res_{m'}(T)$. In particular, $s\mid m$,  $T$ is a cyclic subgroup of $\U_m^{n,s}$, $\inv{T}=\inv{\Delta}$ and $m'_{\nu}=m_{\nu}$. Moreover, $\nu=\pi(m')\setminus \pi(r)$ and $s_{\nu}=m_{ \nu}$, by \Cref{Basic}. Moreover, $|\Delta |$ divides $n$, because it divides $|T|$, which in turn divides $n$.
Then conditions \eqref{ParamB} and \eqref{Param'} of \Cref{Parameters} hold.
By \Cref{ParametersTDelta}, \Cref{Basic} and \Cref{epsilonWD} we have $\pi=\pi_G$,
$\pi'_G=\nu$, $o=o_G$, $\epsilon=\epsilon_G$ and $r=r_G$.
Let $p\in \pi(r)$.
If $\epsilon^{p-1}=1$ then $\frac{m_p}{r_p}=|\Res_{m_p}(T_p)|\le n_p$ and if $\epsilon=-1$ then $\max(2,\frac{m_2}{r_2})=|\Res_{m_2}(T_2)|\le |T_2|\le n_2$ and $m_2\le 2s_2$.
As the metacyclic factorization $G=AB$ is minimal, $T$ is $(n,s)$-canonical by \Cref{MiniMF}. Then the remaining conditions in \eqref{Param-} and \eqref{Param+} follow.
\end{proofof}

\begin{proofsof}\emph{\Cref{Construction} and (2) implies (1) in \Cref{Parameters}}.
Suppose that $m,n,s$ and $\Delta$ satisfy the conditions of (2) in \Cref{Parameters}.
By \Cref{ConsRemark} there is a cyclic subgroup $T$ of $\U_m^{n,s}$ with $\Res_{m'}(T)=\Delta$ and $[T]=[\Delta]$. 
Let $t\in \N$ with $T=\GEN{t}_{m}$.
Let $G=\G_{m,n,s,t}$ and denote $A=\GEN{a}$ and $B=\GEN{b}$.  We will prove that $G=AB$  is a minimal factorization of $G$ that $m=|A|$, $n=[G:A]$, $s=[G:B]$ and $\Delta=\Delta(AB)$.
This will complete the proofs of \Cref{Parameters} and \Cref{Construction}.

Of course $G=AB$ is a metacyclic factorization of $G$ and $T=T_G(A)$.
Since $m_{\nu}=s_{\nu}$, $n$ is multiple of $|\Delta|$ and  $|\Res_{m_{\nu}}(T)|=|\Res_{m_{\nu}}(\Delta)|$, it follows that $|\Res_{m_{\nu}}(T)|$ divides $n$ and $s(t-1)$. 
On the other hand if $p\mid r$ then $t\equiv \epsilon^{p-1}+r_p \mod m_p$. Therefore, if $\epsilon^{p-1}=1$ then $o_{m_p}(t)=\frac{m_p}{r_p}\mid n$ and $s(t-1)\equiv sr_p\equiv 0 \mod m_p$. 
Otherwise, i.e. if $\epsilon=-1$ and $p=2$, then $2\mid |\Delta|\mid n$ and $\frac{m_2}{r_2}\le n_2$ and $m_2\mid 2s$. Thus $o_{m_2}(t)=o_{m_2}(-1+r_2)=\max(2,\frac{m_2}{r_2})\le n_2$ and $m_2\mid t(s-1)$. This shows that $m$ divides both $t^n-1$ and $s(t-1)$, i.e. $T\subseteq \U_m^{n,s}$.
Then $|A|=m$ and $[G:A]=n$, and hence $[G:B]=s$. From condition \eqref{Param'} we have that $\Delta=\Res_{m'}(T_G(A))=\Delta(AB)$ and  from conditions \eqref{Param+} and \eqref{Param-} it follows that $T$ is $(n,s)$-canonical. Then the metacyclic factorization $G=AB$ is minimal by \Cref{MiniMF}.
\end{proofsof}

%
%


Having in mind that a metacyclic group is nilpotent if and only if $o_G=1$ one can easily obtain from \Cref{Parameters}  a description of the finite nilpotent metacyclic groups or equivalently the values of the lists of metacyclic invariants of the finite nilpotent metacyclic groups. Observe that (1) corresponds to cyclic groups, (2) to 2-generated abelian groups, (3) to non-abelian nilpotent metacyclic groups $G$ with $\epsilon_G=1$ and (4) to metacyclic nilpotent groups with $\epsilon_G=-1$.

\begin{corollary}\label{Nilpotent}
	Let $m,n,s\in \N$ and $t\in \N\cup \{0\}$. Then $(m,n,s,t)$ is the list of metacyclic invariants of a finite metacyclic nilpotent group if and only if $s\mid m$, $t<m$ and one of the following conditions hold:
	\begin{enumerate}
		\item $m=1$.
		\item $t=1$ and $s=m\le n$. 
		\item $\pi(t-1)=\pi(m)$, $\lcm\left(t-1,\frac{m}{t-1}\right)\mid s \mid n$ and if $4\mid m$ then $4\mid t-1$.
		\item There is a divisor $r$ of $s_{2'}m_2$ such that $\pi(r)=\pi(m)$, $4\mid r$, 
		$t \equiv 1+r_{2'}\mod m_{2'}$,  $t\equiv -1+r_2 \mod m_2$, $\frac{m_{2'}}{r_{2'}} \mid s_{2'}\mid n_{2'}$,  $\max\left(2,\frac{m_2}{r_2}\right)\le n_2$, $m_2\le 2s_2$ and $s_2\ne n_2r_2$. If moreover $4\mid n$ and $8\mid m$ then $r_2\le s_2$.
	\end{enumerate}
	In that case $\G_{m,n,s,t}$ is nilpotent with metacyclic invariants $(m,n,s,t)$.
\end{corollary}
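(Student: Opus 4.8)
The plan is to obtain the corollary as the nilpotent specialization of \Cref{Parameters}, using the criterion recorded just above the statement: a finite metacyclic group $G$ is nilpotent if and only if $o_G=1$. So the first task is to pin down the arithmetic meaning of $o_G=1$. With $G=AB$ a minimal factorization, $T=T_G(A)=\GEN{t}_m$ and $\inv{T}=(r,\epsilon,o)$, \Cref{Basic} gives $\nu=\pi(m)\setminus\pi(r)=\pi'$ and $o=o_G$, so I must show $o=1\Leftrightarrow\nu=\varnothing\Leftrightarrow\pi(r)=\pi(m)$. The implication $\nu=\varnothing\Rightarrow o=1$ is immediate from $o=|\Res_{m_\nu}(T_{\nu'})|$. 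For the converse, if $p\in\nu$ then $p\mid m$ and $p\nmid r$; the prime $2$ is excluded because $t$ odd forces $2\mid\gcd(m_2,t-\epsilon)=r_2$ whenever $2\mid m$ (\Cref{ParametersTDelta}), so $p$ is odd, and then $p\nmid r$ together with $r_{2'}=\gcd(m_{2'},t-1)$ gives $t\not\equiv1\bmod p$; hence the image of $T$ modulo $p$ is a nontrivial subgroup of the cyclic group $\U_p$ of order $p-1$, which has nontrivial $p'$-part and therefore $o=|\Res_{m_\nu}(T_{\nu'})|>1$. Thus nilpotency is exactly $\pi(r)=\pi(m)$, which I would read off $t$ through $\pi(r)=\pi(\gcd(m,t-1))$.

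Next I would feed $o=1$ (equivalently $o_p=1$ for every $p$) and $\nu=\varnothing$ into condition (2) of \Cref{Parameters}, splitting on whether $G$ is cyclic ($m=1$), abelian ($t=1$, whence $T=1$, $r=m$, $\epsilon=1$), or non-abelian with $\epsilon=1$ or $\epsilon=-1$. Since $\nu=\varnothing$, condition \eqref{ParamB} collapses to $s\mid m$ and $|\Delta|\mid n$, and the exponent $m'$ of \eqref{m'} simplifies: for $p$ with $\epsilon^{p-1}=1$ and $o_p=1$ one gets $m'_p=\min(m_p,r_p,\max(r_p,s_p,r_ps_p/n_p))=r_p$, so $m'=r$ and $\Delta=\Res_r(T)=1$. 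In the $\epsilon=1$ case the residual content of \eqref{Param+} is obtained by setting $o_p=1$: the alternative ``$s_po_p\nmid n$'' of (Can+) becomes ``$s_p\nmid n$'', which is excluded by $s_p\mid n$, forcing $r_p\mid s$; and the clause ``if $r_p>s_p$ then $n_p<s_po_p$'' becomes vacuous against $s_p\le n_p$, forcing $r_p\le s_p$. Together these give $\lcm(r_p,m_p/r_p)\mid s_p\le n_p$ for all $p$, i.e. the chain $\lcm(t-1,\tfrac{m}{t-1})\mid s\mid n$ of case (3), with $r=\gcd(m,t-1)$ playing the role of $t-1$. In the $\epsilon=-1$ case I would specialize \eqref{Param-}: here $p=2$, $4\mid r$, $2\mid|\Delta|\mid n$, and $o_2=1<n_2$, so the conditions reduce to $m_2/r_2\le n_2$, $m_2\le2s_2$, $s_2\ne n_2r_2$, and the conditional $r_2\le s_2$ whenever $4\mid n$ and $8\mid m$, which is exactly case (4).

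Finally I would convert the group data $(r,\epsilon)$ into the distinguished integer $t$. By \Cref{ConsRemark} the canonical $t$ satisfies $t\equiv\epsilon^{p-1}+r_p\bmod m_p$ for every $p\in\pi(r)=\pi(m)$; this reads $\epsilon=1\Leftrightarrow t\equiv1\bmod4$ when $4\mid m$, and recovers $r_2,r_{2'}$ as $\gcd(m_2,t-\epsilon)$ and $\gcd(m_{2'},t-1)$, so that the conditions of cases (3) and (4) are precisely the $t$-translations produced above. For the realization statement I would start from an admissible $(m,n,s,t)$, form $T=\GEN{t}_m$, check that the displayed conditions force $T\subseteq\U_m^{n,s}$ and $(n,s)$-canonicity, and then invoke \Cref{Construction} together with \Cref{MiniMF} to conclude that $\G_{m,n,s,t}$ is metacyclic with $\INV(\G_{m,n,s,t})=(m,n,s,\Delta)$ and $o=1$, hence nilpotent with the prescribed metacyclic invariants.

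I expect the main obstacle to be the prime $2$. Separating cases (3) and (4) requires tracking $\epsilon$ through the $2$-adic congruence for $t$, and verifying the three $\epsilon=-1$ constraints ($m_2\le2s_2$, $s_2\ne n_2r_2$, and the conditional $r_2\le s_2$) relies on the valuation bookkeeping of \Cref{PropEse}. The degenerate cases (1) and (2) also demand separate attention: one must confirm that the minimal metacyclic factorization of a cyclic or $2$-generated abelian group forces $s=m$ and the normalizing inequality $m\le n$, so that those tuples fall under the stated conditions rather than under (3).
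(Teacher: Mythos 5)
Your route is the one the paper itself intends (the paper offers only the sketch preceding the corollary): specialize \Cref{Parameters} through ``nilpotent $\Leftrightarrow$ $o_G=1$'' and split into the four cases according to $m=1$, $t=1$, $\epsilon=1$, $\epsilon=-1$. However, two steps do not hold up as written. The minor one: to get $o_G=1\Rightarrow\nu=\emptyset$ you argue that for $p\in\nu$ the image $\Res_p(T)$ is a nontrivial $p'$-group and conclude $|\Res_{m_\nu}(T_{\nu'})|>1$. That is a non sequitur: the nontrivial image modulo $p$ could be generated by the image of $T_q$ for another prime $q\in\nu$, and then it meets $\Res_p(T_{\nu'})$ trivially. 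The repair is to take $p$ the \emph{smallest} prime of $\nu$: then $|\Res_p(T)|$ divides $p-1$, so every prime divisor of it is smaller than $p$ and hence lies in $\nu'$, forcing $\Res_p(T_{\nu'})\ne 1$.

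The serious gap is the last step, where you assert that the conditions of cases (3) and (4) ``are precisely the $t$-translations'' of your conditions on $(r,\epsilon,o)$. That assertion is exactly what needs proof, and it is false as stated. The invariant $t_G$ is defined by the \emph{per-prime} congruences $t\equiv\epsilon^{p-1}+r_p \bmod m_p$, which yield $\gcd(t_G-1,m)=r$ but not $t_G-1=r$; case (3), by contrast, literally requires $t-1\mid m$ (otherwise $\frac{m}{t-1}$ is meaningless) together with $\pi(t-1)=\pi(m)$, i.e. $t-1=r$. Concretely, let $G=\G_{9,9,3,4}\times(\Z/5\Z\times\Z/5\Z)$, a nilpotent metacyclic group: its minimal factorization has $m=45$, $n=45$, $s=15$, $r_3=3$, $r_5=5$, $\epsilon=1$, $\Delta=1$, and the defining congruences $t\equiv 1+3 \bmod 9$, $t\equiv 1+5 \bmod 5$ give $t_G=31$. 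Then $t_G-1=30$ neither divides $45$ nor satisfies $\pi(30)=\pi(45)$, so the list $(45,45,15,31)$ satisfies none of the conditions (1)--(4), although it is the list of metacyclic invariants of a nilpotent group. The identification $t-1=r$ that you (and the literal statement) rely on holds only when $\frac{m_p}{r_p}$ divides $\frac{r}{r_p}-1$ for every $p\mid m$, which fails as soon as $m$ has two odd prime divisors in this configuration (here $3\nmid\frac{15}{3}-1$). A correct execution of your plan must keep the conditions in per-prime form, i.e. phrase (3) and (4) via $r=\gcd(m,t-1)$ and the separate congruences $t\equiv\epsilon^{p-1}+r_p\bmod m_p$ for each $p\in\pi(m)$; carrying that out would in fact show that the corollary needs to be read (or restated) in exactly that way.
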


\section{A GAP implementation}\label{SectionImplementation}

In this section we show how we can use the result in previous sections to construct some GAP functions for calculations with finite metacyclic groups.
The code of these function is available in \cite{Metacyclics}.

We start with two auxiliar functions.
We call \emph{metacyclic parameters} to any list $(m,n,s,t)$ with $m,n,s\in \N$ and $[t]_m\in \U_m^{n,s}$, i.e. $s(t-1)\equiv t^n-1 \mod m$. In that case \texttt{MetacyclicGroupPC([m,n,s,t])} outputs the group $\G_{m,n,s,t}$ with a power-conjugation presentation.
The boolean function \texttt{IsMetacyclic} returns \texttt{true} if the input is a finite metacyclic and \texttt{false} otherwise.
\begin{verbatim}
gap> G:=MetacyclicGroupPC([10,20,5,3]);
<pc group of size 200 with 5 generators>
gap> IsMetacyclic(G);
true
gap> Filtered([1..16],x->IsMetacyclic(SmallGroup(100,x)));
[ 1, 2, 3, 4, 5, 6, 8, 9, 14, 16 ]
\end{verbatim}

To introduce the next function we start presenting an algorithm that uses \Cref{AlgoMinMF} to compute $\INV(G)$ for a given metacyclic group $G$. Observe that in \Cref{AlgoMinMF} the values of $m=|a|$, $n=[G:\GEN{a}]$, $s=[G:\GEN{a}]$ and $(r,\epsilon,o)=[T_G(\GEN{a})]$ are updated along the calculations. 
We use this in step \eqref{UseAlgoMinMF} of the following algorithm. 

\begin{algorithm}\label{AlgoINV}
	{\sc Input}: A finite metacyclic group $G$.
	
	{\sc Output}: $\INV(G)$.
	
	\begin{enumerate}
		\item Compute a metacyclic factorization $G=AB$ of $G$.
		\item\label{UseAlgoMinMF} Perform \Cref{AlgoMinMF} with input $(A,B)$ saving not only the output $(\GEN{a},\GEN{b})$ but also $m,n,s,r,\epsilon$ and $o$ computed along.
		\item Compute $m'$ using \eqref{m'} and $t\in \N$ such that $a^b=a^t$. 
		\item Return $(m,n,s,\Res_{m'}(\GEN{t}_m))$. 
	\end{enumerate}
\end{algorithm}

A slight modification of \Cref{AlgoINV} allows the computation of the list of metacyclic invariants of a finite metacyclic group: 

\begin{algorithm}\label{AlgoMC}
	{\sc Input}: A finite metacyclic group $G$.
	
	{\sc Output}: The list of metacyclic invariants of $G$.
	
	\begin{enumerate}
		\item Compute a metacyclic factorization $G=AB$ of $G$.
		\item\label{Algo2MF} Perform \Cref{AlgoMinMF} with input $(A,B)$ saving not only the output $(\GEN{a},\GEN{b})$ but also $m,n,s,r$ and $\epsilon$ computed along.
		\item\label{Algo2t} Compute $m'$ using \eqref{m'} and $t\in \N$ such that $a^b=a^t$ and set $\Delta:=\Res_{m'}(\GEN{t}_m)$. 
		\item\label{Algo2tp} Use the Chinese Remainder Theorem to compute the unique $1\le t \le m_{\pi(r)}$ such that $t\equiv \epsilon^{p-1}+r_p \mod m_p$ for every $p\in \pi(r)$. 
		\item While $\gcd(t,m')\ne 1$ or $\GEN{t}_{m'}\ne \Delta$, $t:=t+m_{\pi(r)}$. 
		\item Return $(m,n,s,t)$.
	\end{enumerate}
\end{algorithm}

%
%

Observe that $G=\GEN{a}\GEN{b}$ is a minimal metacyclic factorization at step \eqref{Algo2MF} of \Cref{AlgoMC},   and $m=m_G$, $n=n_G$ and $s=s_G$.
At step \eqref{Algo2t}, we have $T_G(\GEN{a})=\GEN{t}_m$ and hence $G\cong \G_{m,n,s,t}$ and $\Delta=\Delta_G=\Res_{m'}(\GEN{t}_m)$. However, this $t$ is not $t_G$ yet. 
The $t$ at step \Cref{Algo2tp} is the smallest one with $t\equiv \epsilon^{p-1}+r_p \mod m_p$ for every $p\in \pi(r)$ and the next steps search for the first integer $t$ satisfying this condition as well as representing an element of $\U_m$ with $\Res_{m'}(\GEN{t}_m)=\Delta$. 

The GAP function \texttt{MetacyclicInvariants} implements \Cref{AlgoMC}.
For example in the following calculations one computes the metacyclic invariants of all the metacyclic groups of order $200$.

\begin{verbatim}
gap> mc200:=Filtered([1..52],i->IsMetacyclic(SmallGroup(200,i)));;
gap> List(mc200,i->MetacyclicInvariants(SmallGroup(200,i)));
[[25,8,25,24],[1,200,1,0],[25,8,25,7],[100,2,50,99],[100,2,50,49],[100,2,100,99],
[50,4,50,49],[2,100,2,1],[4,50,4,3],[4,50,2,3],[50,4,50,7],[5,40,5,4],[5,40,5,1],
[5,40,5,2],[20,10,10,19],[20,10,10,9],[20,10,20,19],[10,20,10,9],[10,20,10,1],
[20,10,20,11],[20,10,10,11],[10,20,10,3]]
\end{verbatim}

The GAP functions \texttt{MCINV} and \texttt{MCINVData} implement \Cref{AlgoINV} representing $\INV(G)$ in two different ways. 
While \texttt{MCINV(G)} outputs $\INV(G)$ if $G$ is a metacyclic group, \texttt{MCINVData(G)} ouputs a 5-tuple \texttt{[m,n,s,m',t]} such that $\INV(G)=(m,n,s,\GEN{t}_{m'})$.
The input data \texttt{G} can be replaced by metacyclic parameters $[m,n,s,t]$ representing the group $\G_{m,n,s,t}$:

\begin{verbatim}
gap> G:=SmallGroup(384,533);           
<pc group of size 384 with 8 generators>
gap> MetacyclicInvariants(G);
[ 8, 48, 4, 5 ]
gap> x:=MCINV(G);                     
[ 8, 48, 4, <group of size 1 with 1 generator> ]
gap> y:=MCINVData(G);
[ 8, 48, 4, 4, 1 ]
gap> x[4]=Group(ZmodnZObj(y[5],y[4]));
true
gap> H:=MetacyclicGroupPC([8,48,4,5]);
<pc group of size 384 with 8 generators>
gap> IdSmallGroup(H);
[ 384, 533 ]
gap> MetacyclicInvariants([20,4,8,11]);
[ 4, 20, 4, 3 ]
gap> MCINVData([20,4,8,11]);
[ 4, 20, 4, 4, 3 ]
\end{verbatim}

Observe that two finite metacyclic groups $G$ and $H$ are isomorphic if and only if $\INV(G)=\INV(G)$ if and only if they have the same metacyclic invariants. 
The function \texttt{AreIsomorphicMetacyclicGroups} uses this to decide if two metacyclic groups $G$ and $H$ are isomorphic. 
It outputs \texttt{true} if $G$ and $H$ are isomorphic finite metacyclic groups and \texttt{false} if they are finite metacyclic groups but they are not isomorphic.
In case one of the inputs is not a finite metacyclic group then it fails.
The input data \texttt{G} and  \texttt{H} can be replaced by metacyclic parameters of them.
\begin{verbatim}
gap> H:=MetacyclicGroupPC([100,30,10,31]);
<pc group of size 3000 with 7 generators>
gap> K:=MetacyclicGroupPC([300,30,10,181]);
<pc group of size 9000 with 8 generators>
gap> AreIsomorphicMetacyclicGroups(H,K);
false
gap> AreIsomorphicMetacyclicGroups([300,10,10,31],K);
false
gap> G:=MetacyclicGroupPC([300,10,10,31]);
<pc group of size 3000 with 7 generators>
gap> MetacyclicInvariants(G);
[ 100, 30, 10, 31 ]
gap> MetacyclicInvariants(H);
[ 100, 30, 10, 31 ]
gap> MetacyclicInvariants(K);
[ 50, 180, 10, 31 ]
\end{verbatim}

We now explain a method to compute all the metacyclic group of a given order $N$.
We start producing all the tuples $(m,n,s,r,\epsilon,o)$ such that $\INV(G)=(m,n,s,\Delta)$ and $\inv{\Delta}=(r,\epsilon,o)$ for some finite metacyclic group $G$ and some cyclic subgroup $\Delta$ of $\U_{m'}$ with $m'$ as in \eqref{m'}. For such group $G$ we denote $\IN(G)=(m,n,s,r,\epsilon,o)$. The following lemma characterizes when a given tuple $(m,n,r,s,r,\epsilon,o)$ equals $\IN(G)$ for some finite metacyclic group:

\begin{lemma}\label{mnsrep}
Let $m,n,s,r,o\in \N$ and $\epsilon\in \{1,-1\}$ and let $\pi'=\pi(m)\setminus \pi(r)$ and $\pi=\pi(mn)\setminus \pi'$.
Then $\IN(G)=(m,n,s,r,\epsilon,o)$ for some finite metacyclic group $G$ if and only if the following conditions hold:
\begin{enumerate}[label=(\Alph*)]
			\item\label{CondDiv} $s\mid m$, $r\mid m$, $o\mid n_{\pi}$, $m_{\pi}\mid rn$, $m_{\pi} \mid rs$, $s_{\pi'}=m_{\pi'}$ and if $4\mid m$ then $4\mid r$.
			\item\label{Cond+} If $p\in \pi(r)$  and $\epsilon^{p-1}=1$ then $s_p\mid n$ and either $r_p\mid s$ or $s_po_p\nmid n$.
			\item\label{Cond-} If $\epsilon=-1$ then $2\mid n$, $4\mid m$, $m_2\mid 2s$, $s_2\ne n_2r_2$. If moreover $4\mid n$, $8\mid m$ and $o_2<n_2$ then $r_2\mid s$.
			\item\label{opi'} $o\mid \lcm \{ q-1 : q\in \pi'\}$ and for every $q\in \pi'$ with $\gcd(o,q-1)=1$ there is $p\in \pi'\cap \pi(n)$ with $p\mid q-1$.
		\end{enumerate}
\end{lemma}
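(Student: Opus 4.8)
The plan is to deduce the statement from \Cref{Parameters}. The point is that $\IN(G)=(m,n,s,r,\epsilon,o)$ holds for some finite metacyclic $G$ if and only if there is a cyclic subgroup $\Delta$ of $\U_{m'}$, with $m'$ the divisor of $m$ prescribed by \eqref{m'}, such that $\inv{\Delta}=(r,\epsilon,o)$ and $(m,n,s,\Delta)=\INV(H)$ for some finite metacyclic $H$; by \Cref{Parameters} the latter is governed by conditions (ParamB)--(Param+), and (Param') holds automatically since $m'$ is taken to be the value of \eqref{m'}. So the proof divides into a routine dictionary between the two lists of numerical conditions and a genuinely new existence question for a $\Delta$ carrying the prescribed third invariant $o$.

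First I would set up the dictionary. Rewriting divisibilities of prime powers, $s_p\mid n$ means $s_p\le n_p$, $r_p\mid s$ means $r_p\le s_p$, and $s_po_p\nmid n$ means $n_p<s_po_p$ (all powers of $p$, as $o$ is a $\pi$-number); with these translations condition (B) becomes the second clause of (Param+) and condition (C) becomes (Param-) verbatim. The inequalities $\frac{m_p}{r_p}\le s_p$ and $\frac{m_2}{r_2}\le n_2$ appearing in (Param+) and (Param-) are precisely $m_\pi\mid rs$ and $m_\pi\mid rn$ from (A), where I use that $m_\pi=m_{\pi(r)}$ because the primes of $\pi$ outside $\pi(r)$ do not divide $m$. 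Running this backwards, the divisibilities in (A) together with $m_2\mid 2s$ and ``$4\mid m\Rightarrow 4\mid r$'' are supplied by \Cref{ParametersTDelta}.\eqref{mDividernrs} and by the fact that $\epsilon=-1$ forces $4\mid r_2\mid m_2$, while $s_{\pi'}=m_{\pi'}$ is the equality $m_\nu=s_\nu$ of (ParamB). The only part of (ParamB) without an immediate counterpart is $|\Delta|\mid n$; I would check it directly from \eqref{m'}, since for $p\in\pi(r)$ the order of the $p$-component of $\Delta$ is $\frac{m'_p}{r_p}$ (respectively $\max(2,\frac{m'_2}{r_2})$ when $\epsilon=-1$) by \Cref{PropEse}, and the minima defining $m'_p$ bound these by $o_p$ and by $n_2$, each a divisor of the corresponding part of $n$.

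The substantive condition is (D), which has no analogue in \Cref{Parameters} because there $\Delta$ is given rather than constructed. Throughout I would use $o=|\Inn_G({G'}_{\pi'})|_\pi=\lcm_{q\in\pi'}(d_q)_\pi$ from \Cref{Basic}, where $d_q=o_{m_q}(t)$ is the order with which $b$ acts on $A_q$, and note that $\pi'$ consists of odd primes because $2\mid m$ forces $2\mid r$ ($t$ being odd). For necessity: each $q\in\pi'$ has $q\nmid r$, hence $t\not\equiv 1\bmod q$ and $1<o_q(t)\mid q-1$, so $(d_q)_\pi\mid (q-1)_\pi$ yields $o\mid\lcm_{q\in\pi'}(q-1)$; and if $\gcd(o,q-1)=1$ then $(d_q)_\pi\mid\gcd(o,q-1)=1$, forcing $d_q$ to be a nontrivial $\pi'$-number dividing $n$, whose prime divisor $p\mid o_q(t)\mid q-1$ lies in $\pi'\cap\pi(n)$. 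For sufficiency I would assemble a generator $t'$ of $\Delta$ by the Chinese Remainder Theorem, taking $t'\equiv\epsilon^{p-1}+r_p\bmod m'_p$ for $p\in\pi(r)$ as in \Cref{ConsRemark}, and choosing the residues $t'\bmod m_q$ for $q\in\pi'$ so that each prime power exactly dividing $o$ is realised as the order of a suitable component in some $\U_{m_q}$ with $q-1$ divisible by that prime power, while every $q\in\pi'$ receives a nontrivial component of order dividing $n$ (taken from $\gcd(o,q-1)$ when this exceeds $1$, and otherwise from the prime in $\pi'\cap\pi(n)$ furnished by (D)). This produces $\Delta$ with $\inv{\Delta}=(r,\epsilon,o)$, all $d_q>1$, and $\lcm_{q\in\pi'}(d_q)_\pi=o$.

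With such $\Delta$ in hand and (ParamB)--(Param+) verified through the dictionary, the implication (2)$\Rightarrow$(1) of \Cref{Parameters} produces a finite metacyclic $G$ with $\INV(G)=(m,n,s,\Delta)$, whence $\IN(G)=(m,n,s,r,\epsilon,o)$; the necessity direction reads the same dictionary backwards starting from $\Delta=\Delta_G$ and the analysis of the $d_q$ above. I expect the main obstacle to be the bookkeeping in the sufficiency half of (D): one must realise the prescribed $\pi$-number $o$ exactly as $\lcm_{q\in\pi'}(d_q)_\pi$ while keeping every projection to $\U_{m_q}$ nontrivial and every $d_q$ dividing $n$, and one must verify that these choices neither inflate the $\pi$-part beyond $o$ nor perturb the values of $r$ and $\epsilon$ already fixed on the $\pi(r)$-components.
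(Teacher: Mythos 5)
Your proposal is correct and follows essentially the same route as the paper: the paper likewise obtains conditions \ref{CondDiv}--\ref{Cond-} by translating the conditions of \Cref{Parameters} (necessity), proves condition \ref{opi'} by the same analysis of the orders with which a generator of $B$ acts on the $q$-components of $A_{\pi'}$, and for sufficiency builds the $\pi'$-part of the acting subgroup by exactly your prescription (a cyclic subgroup of $\U_{m_{\pi'}}$ realising each prime power of $o$ in some $\U_{m_q}$ with that prime power dividing $q-1$, and giving every $q\in\pi'$, in particular those with $\gcd(o,q-1)=1$, a nontrivial component of order dividing $n$ via the primes $p_q\in\pi'\cap\pi(n)$). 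The only difference is packaging: you invoke \Cref{Parameters}\,(2)$\Rightarrow$(1) as a black box at the end, whereas the paper re-verifies directly that the constructed $T\subseteq\U_m^{n,s}$ is $(n,s)$-canonical and applies \Cref{MiniMF}, which is the same machinery underlying that implication.
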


\begin{proof}
Suppose first that $(m,n,s,r,\epsilon,o)=\IN(G)$ for some finite metacyclic group $G$. Then $\INV(G)=(m,n,s,\Delta)$ for some cyclic subgroup $\Delta$ of $\U_{m'}$ with $\inv{\Delta}=(r,\epsilon,o)$. Then the conditions in statement (2) of \Cref{Parameters} hold and this implies that conditions \ref{CondDiv}--\ref{Cond-} hold.
To prove \ref{opi'} we fix a metacyclic factorization $G=AB$ and observe that $o=o_G(A)=|\Res_{m_{\pi'}}(T_G(A))_{\pi}|$ and $\Res_{m_{\pi'}}(T_G(A))_{\pi}$ is a cyclic subgroup of $(\U_{m_{\pi'}})_{\pi}$.
Then $o$ divides the exponent of $(\U_{m_{\pi'}})_{\pi}$ which is $\lcm\{(q-1)_{\pi} : q\in \pi'\}$.
This proves the first part of \ref{opi'}.
To prove the second one we take $q\in \pi'$ such that $\gcd(o,q-1)=1$.
By \Cref{Basic}.\eqref{BasicResume}, we have $\Res_{q}(T_G(A))\ne 1$.
However $\Res_{q}(T_G(A))_{\pi}\mid \gcd(o,q-1)=1$ and hence, if $p$ is a divisor of $\Res_{q}(T_G(A))$ then $p\mid |U_q|=q-1$, $p\mid [G:A]=n$ and $p\not\in \pi$, so that $p\in \pi'$. This finishes the proof of \ref{opi'}.

Conversely, suppose that conditions \ref{CondDiv}-\ref{opi'} hold.
By condition \ref{opi'}, $2\not\in \pi'$ and hence if $q\in \pi'$ then $\U_{m_q}$ is cyclic of order $\varphi(m_q)$. Therefore for every $q\in \pi'$, the group $\U_q$ contains a cyclic subgroup of order $q-1$.
Therefore $\U_m$ contains a cyclic subgroup of order $k=\lcm\{q-1 : q\in \pi'\}$.
Furthermore, by \ref{opi'}, for every $p\in \pi$ we have that $o_p\mid k$ and hence $o_p\mid q-1$ for some $q\in \pi'$.
Then $\U_{m_q}$ contains an element of order $o_p$ and, as $\U_{m_{\pi'}}\cong \prod_{q\in \pi'} \U_{m_q}$, it follows that $\U_{m_{\pi'}}$ contains an element of order $o$.
Let $\tau=\{q\in \pi' : \gcd(o,q-1)=1\}$. By \ref{opi'}, for every $q\in \tau$ there is $p_q\in \pi'\cap \pi(n)$ such that $p_q\mid q-1$.
Let $h=\prod_{q\in \tau} p_q$.
For every $q\in \tau$, there is an element in $\U_{m_q}$ of order $p_q$.
Then $\U_{m_\tau}$ has an element of order $h$.
As $o\mid n_\pi$ and $h\mid n_{\pi'}$, $\U_{m_{\pi'}}$ has a cyclic subgroup $S$ of order $oh$.
Then $\Aut(C_m)$ has a cyclic subgroup $T$ such that $\Res_{m_{\pi'}}(T)=S$ and
$\Res_{m_p}(T)=\Res_{m_p}(T)=\GEN{\epsilon^{p-1}+r_p}_{m_p}$ for every $p\in \pi$.
By condition \ref{Cond+}, if $p\in \pi(r)$ and $\epsilon^{p-1}=1$ then $|\Res_{m_p}(T)|=\frac{m_p}{r_p}\mid n_p$.
By condition \ref{Cond-}, if $\epsilon=-1$ then $2\in \pi$, $2\mid n$ and $\frac{m_2}{r_2}\mid n$ by \ref{CondDiv}.
Thus $|\Res_{m_p}(T)|=\max(2,\frac{m_2}{r_2})\mid n$.
Then $|\Res_{m_p}(T)|$ divides $n$ for every $p\in \pi$.
This implies that $|T|=\lcm(|S|,|\Res_{m_p}(T)|,p\in \pi)$ and this number divides $n$.
On the one hand we have $s_{p'}=m_{\pi'}$ and if $p\in \pi$ then either $m_p\mid rs$ or $p=2$, $\epsilon=-1$ and $2m_2\mid s$. Using this it is easy to see that $\Res_{\frac{m}{s}}(T)=1$. This proves that $T\subseteq \U_m^{n,s}$ and by the election of $T$ it follows that $\inv{T}=(r,\epsilon,o)$. Moreover, from conditions \ref{Cond+} and \ref{Cond-}, it follows that $T$ is $(n,s)$-canonical and hence $\G_{m,n,s,T}=\GEN{a}\GEN{b}$ is a minimal factorization. Thus $\IN(\G_{m,n,s,T})=(m,n,s,r,\epsilon,o)$, as desired.
\end{proof}

Our last algorithm is based in \Cref{mnsrep} and compute a list containing exactly one representative of each isomorphism class of the metacyclic groups of a given order.

\begin{algorithm}\label{AlgoAll}
	{\sc Input}: A positive integer $N$.

	{\sc Output}: A list containing exactly one representative of each isomorphism class of the  metacyclic groups of order $N$.
	\begin{enumerate}
		\item $M:=[\;]$, an empty list, $\pi':=\pi(m)\setminus \pi(r)$, $\pi':=\pi(N)\setminus \pi'$.
		\item $P:=\{(m,n,s,r,\epsilon,o) : n,m,s,r,o\in \N, \epsilon\in \{1,-1\}, N=mn \text{ and conditions \ref{CondDiv}-\ref{opi'} hold} \}$.
		\item For each $(m,n,s,r,\epsilon,o) \in P$:
		\begin{enumerate}
			\item\label{m's'} $m':=m_{\pi'}\prod_{p\in \pi(r)} m'_p$ with $m'_p$ as in \eqref{m'} and $s':=\frac{sm'}{m}$.
			\item\label{T} For every cyclic subgroup $\Delta$ of $\U_{m'}^{n,s'}$ with $\inv{\Delta}=(r,\epsilon,o)$:
			\begin{itemize}
				\item Select a cyclic subgroup $T$ of $\U_m$ such that $\Res_{m'}(T)=\Delta$.
				\item Add $\G_{m,n,s,T}$ to the list $M$.
			\end{itemize}
		\end{enumerate}
		\item Return the list $M$.
	\end{enumerate}
\end{algorithm}

Observe that if $(m,n,s,r,\epsilon,o)$ satisfy conditions \ref{CondDiv}-\ref{opi'} then $m$ divides $sm'$.
Indeed, if $p\nmid r$ then $m_p=m'_p$. If $\epsilon=-1$ then $\frac{m_2}{2}$ divides $s$ and $2\mid m'$, hence in this case $\frac{m_2}{s_2}\mid m'$.
Finally, if $p\in \pi(r)$ and $\epsilon^{p-1}=1$. Then $p\in \pi$ and hence $m_p\le r_ps_p$ by condition \ref{CondDiv}.
Therefore $\frac{m_p}{s_p}\le \min(m_p,r_po_p)$.
If $r_p\mid s_p$ then also $\frac{m_p}{s_p}\le s_p$.
Otherwise $s_po_p\nmid n$ and hence $r_p\frac{s_po_p}{n_p}>r_p\ge \frac{m_p}{s_p}$.
This proves that $\frac{m_p}{s_p}\mid m'$ for every prime $p$, so that $m\mid sm'$, as desired.
This justify that $s'\in \N$ is step \eqref{m's'}.

On the other hand if $T$ is as in \eqref{T} then $T\subseteq \U_m^{n,s}$. Indeed, $\frac{m}{s}=\frac{m'}{s'}$ and hence $\Res_{\frac{m}{s}}(T)=\Res_{\frac{m'}{s'}}(\Delta)=1$. Moreover $\Res_{m_{\pi'}}(T)=\Res_{m'_{\pi'}}(\Delta)$ and hence
$|\Res_{m_{\pi'}}(T)|$ divides $n$. On the other hand $\inv{T}=(r,\epsilon,o)=\inv{T}$ and hence if $\epsilon^{p-1}=1$ then $|\Res_{m_p}(T)|=\frac{m_p}{r_p} \mid n$, by \ref{CondDiv}. Otherwise $|\Res_{m_2}{T_2}|=\max(2,\frac{m_2}{r_2})$ which divides $n$ by \ref{CondDiv} and \ref{Cond-}.

The function \texttt{MetacyclicGroupsByOrder(N)} implements a combination of \Cref{AlgoMC} and \Cref{AlgoAll} and returns the complete list of metacyclic invariants of metacyclic groups of order $N$.

\begin{verbatim}
gap> MetacyclicGroupsByOrder(200);
[[1,200,1,0],[2,100,2,1],[4,50,2,3],[4,50,4,3],[5,40,5,1],[5,40,5,2],[5,40,5,4],
[10,20,10,1],[10,20,10,3],[10,20,10,9],[20,10,10,9],[20,10,10,11],[20,10,10,19],
[20,10,20,11],[20,10,20,19],[25,8,25,7],[25,8,25,24],[50,4,50,7],[50,4,50,49],
[100,2,50,49],[100,2,50,99],[100,2,100,99]]
gap> MetacyclicGroupsByOrder(8*3*5*7);
[[1,840,1,0],[2,420,2,1],[3,280,3,2],[4,210,2,3],[4,210,4,3],[5,168,5,2],[5,168,5,4],
[6,140,6,5],[7,120,7,2],[7,120,7,6],[7,120,7,3],[10,84,10,3],[10,84,10,9],[12,70,6,5],
[12,70,6,11],[12,70,12,11],[14,60,14,3],[14,60,14,9],[14,60,14,13],[15,56,15,2],
[15,56,15,14],[20,42,10,9],[20,42,10,19],[20,42,20,19],[21,40,21,20],[28,30,14,3],
[28,30,14,5],[28,30,14,11],[28,30,14,13],[28,30,14,27],[28,30,28,3],[28,30,28,11],
[28,30,28,27],[30,28,30,17],[30,28,30,29],[35,24,35,2],[35,24,35,3],[35,24,35,4],
[35,24,35,13],[35,24,35,19],[35,24,35,34],[42,20,42,41],[60,14,30,29],[60,14,30,59],
[60,14,60,59],[70,12,70,3],[70,12,70,9],[70,12,70,13],[70,12,70,19],[70,12,70,23],
[70,12,70,69],[84,10,42,41],[84,10,42,83],[84,10,84,83],[105,8,105,62],[105,8,105,104],
[140,6,70,9],[140,6,70,19],[140,6,70,39],[140,6,70,69],[140,6,70,89],[140,6,70,139],
[140,6,140,19],[140,6,140,39],[140,6,140,139],[210,4,210,83],[210,4,210,209],
[420,2,210,209],[420,2,210,419],[420,2,420,419]]
\end{verbatim}

	\bibliographystyle{amsalpha}
	\bibliography{ReferencesMSC}	
\end{document}